\numberwithin{equation}{section}
\newtheorem{thm}{Theorem}[section]
\newtheorem{lem}[thm]{Lemma}
\newtheorem{cor}[thm]{Corollary}
\newtheorem{prop}[thm]{Proposition}
\newtheorem{defin}[thm]{Definition}
\newtheorem{rem}{Remark}
\def\la{{\lambda}}
\def\La{{\Lambda}}
\def\Ga{{\Gamma}}
\def\Om{{\Omega}}
\def\q{Q}
\def\h{F}
\def\E{\mathbb E}
\def\R{\mathbb R}
\newcommand{\ve}{\varepsilon}
\newcommand{\vf}{\varphi}
\newcommand{\D}{\mathfrak D}
\newcommand{\1}{\mathbf 1}
\newcommand{\etabar}{{\hat \eta}}
\newcommand{\Jbar}{{\hat J}}
\newcommand{\exc}{{\mathrm{exc}}}
\newcommand{\LS}{{\mathrm{LS}}}
\begin{document}

\title{Quasi-static limit for the asymmetric simple exclusion}

\author{Anna de Masi}
\address{Anna de Masi\\Universit\`a dell'Aquila, 67100 L'Aquila}
\email{{\tt demasi@univaq.it}}

\author
{Stefano Marchesani}
\address
{Stefano Marchesani: GSSI, 67100 L'Aquila, Italy}
\email
{\tt stefano.marchesani@gssi.it}

\author
{Stefano Olla}
\address
{Stefano Olla: CNRS CEREMADE\\
Universit\'e Paris-Dauphine, PSL Research University\\
75016 Paris, France\\
Institute Universitaire de France and
 GSSI, 67100 L'Aquila, Italy}
\email
{\tt olla@ceremade.dauphine.fr}
\thanks{
This work was partially supported by ANR-15-CE40-0020-01 grant LSD}

\author
{Lu Xu}
\address
{Lu Xu: GSSI, 67100 L'Aquila, Italy }
\email
{\tt lu.xu@gssi.it}

\begin{abstract}
We study the one-dimensional asymmetric
simple exclusion process on the lattice $\{1, \dots,N\}$
with creation/annihilation at the boundaries.
The boundary rates are time dependent and change
on a slow time scale $N^{-a}$ with $a>0$.
We prove that at the time scale $N^{1+a}$ the system evolves
quasi-statically with a macroscopic density profile
given by the entropy solution of the stationary Burgers equation
with boundary densities changing in time,
determined by the corresponding microscopic boundary rates.
We consider two different types of boundary rates:
the ``Liggett boundaries'' that correspond to the projection
of the infinite dynamics, and the reversible boundaries,
that correspond to the contact with particle reservoirs in equilibrium.
The proof is based on the control of the
Lax boundary entropy--entropy flux pairs and a coupling argument.
\end{abstract}

\maketitle
\thispagestyle{empty}

\section{Introduction}
\label{sec:introduction}

The one-dimensional open asymmetric simple exclusion process (ASEP) is one
of the most interesting models in non-equilibrium statistical mechanics, in particular because its stationary state can be explicitly computed
(cf. \cite{Derrida93,Schutz99,Uchi04,Brak06}).
Particles perform asymmetric random walks on the finite lattice $\{1, \dots,N\}$
with the exclusion rule (the jump is suppressed if the site is occupied), and at the
boundaries particles are created and absorbed with given rates.
The dynamics is then characterized by 5
parameters (the asymmetry of the random walks
and the 4 boundary rates).

In a seminal article \cite{Liggett75}, Liggett introduced special boundary rates
such that the corresponding dynamics approximates optimally
the dynamics of the infinite system with different densities at $\pm\infty$.
These boundary conditions correspond to the projection of the infinite dynamics
with respect to the Bernoulli measures with different densities on the left
and on the right of the system. Under this choice of boundary conditions
Bahadoran \cite{Baha12}
proved the hydrodynamic limit for the density profile: under the hyperbolic
space-time scaling the density profile converges to the unique $L^\infty$
entropy solution of the Burgers equation satisfying the Bardos--Leroux--N\'ed\'elec
boundary conditions \cite{BNL} in the sense of Otto \cite{Otto96}.
Hydrodynamic limit for general boundary conditions has been recently proven in \cite{lu21}.
In \cite{Baha12} Bahadoran also proves the hydrostatic limit,
i.e., the macroscopic limit of the stationary profile satisfies the stationary
Burgers equation with same boundary conditions. This is the solution of
the variational problem maximizing the stationary flux in case of a density gradient
with opposite sign to the drift generated by the asymmetry, or minimizing
the stationary flux in the other case. This is consistent with the phase diagram
proved in \cite{Derrida93,Schutz99}. The proof in \cite{Baha12} relies on
an extension of the coupling argument used by Rezakhanlou
in \cite{reza} in the infinite dynamics, and on the particular boundary conditions
which are such that at equal density (balanced case) the stationary measure
is known explicitly (given by the Bernoulli measure at the boundary density).

In this article we study the quasi-static hydrodynamic limit for the open ASEP.
This limit is taken in a time scale that is larger than the typical one where
the system converges to equilibrium.
Changing the boundary condition at this time scale, the system is
globally close to the corresponding stationary state.
Quasi-static evolutions are usually presented as idealization
of real thermodynamic transformations among equilibrium states.
They are necessary concepts in order to construct thermodynamic potentials,
for example, to define the thermodynamic entropy from Carnot cycles.
Here we are interested in the quasi-static evolution among \emph{non-equilibrium
stationary states}. 
These quasi-static hydrodynamic limits have
been already studied in the symmetric simple exclusion as well
as in other diffusive systems \cite{DOQS}.
We are here interested in the asymmetric case
where currents of density do not vanish in the limit.
Since in ASEP the typical time scale of convergence to stationarity is hyperbolic,
we look at larger time scales
changing the boundary rates in this time scale.
Consequently, at each instant of time the system is \emph{close}
to the corresponding stationary state determined by the varying
boundary conditions.
We prove that the density profile converges to the entropy
solution of the \emph{quasi-static Burgers equation}
with the boundary conditions given by some time dependent
functions $\rho_\pm(t)$.
We derive this quasi-static evolution for two types of
boundary rates:
\begin{enumerate}
\item Liggett boundaries:
if $p > \frac 12$ is the probability of jumping to the right in the
bulk of the system,
at a macroscopic time $t$ we choose 
$[p\rho_{-}(t), (1-p) (1-\rho_{-}(t))]$ as
rates of creation and annihilation on the left side,
respectively $[(1-p)\rho_{+}(t), p(1-\rho_{+}(t))]$ on the right side.
\item Reversible boundaries: we choose
$[\lambda_-(t)\rho_{-}(t), \lambda_-(t) (1-\rho_{-}(t))]$
as rates of creation and annihilation on the left side,
$[\lambda_+(t)\rho_{+}(t), \lambda_+(t) (1-\rho_{+}(t))]$
on the right side, by accelerating this boundary rates and the symmetric
part of the exclusion process in the bulk. These rates correspond to
contact with reversible reservoirs of particles at the corresponding densities
$\rho_\pm(t)$, i.e. they separately satisfy detailed balance
with respect to the Bernoulli measure of the corresponding density
for any choice of the functions $\lambda_{\pm}(t)$,
and are independent from the asymmetry bulk parameter $p$.
Even when
$\rho_{-} = \rho_{+}$ and time independent, the stationary probability
distribution is not a product measure in general.
\end{enumerate}

The proof of the result for both cases will proceed as follows.

We first prove it in the balanced but time dependent cases
($\rho_{-}(t)= \rho_{+}(t)$).
Surprisingly this is the most difficult part, and it is proven by
controlling the time average of the \emph{microscopic boundary entropy flux}.
The unbalanced situation is then proven by a coupling argument.

The use of the microscopic entropy production
associated to a Lax entropy--entropy flux pair is already present in
the seminal article of Rezakhanlou \cite{reza}.
J. Fritz and collaborators combined this idea with a stochastic version of
compensated compactness in order to deal with non-attractive dynamics
\cite{Fritz04, FT04}.
Otto \cite{Otto96} introduced the \emph{boundary entropy--entropy flux pairs}
in order to characterize the boundary conditions in the scalar hyperbolic equations.
The main point of this article is to prove that,
in the balanced case when we take the same density on the boundaries,
the time average of the microscopic boundary entropy flux is negligeable
in the quasi-static time scale,
even when boundary conditions change in time (see Propositions
\ref{prop:bd-ent-li} and \ref{prop:bd-ent-ge}).

Notice that these results and the methods are very different
from the symmetric case ($p=\frac 12$) studied in \cite{DOQS}, where
the quasi-static time scale is larger than the diffusive one and
the quasi-static profile
satisfies the Laplace equation with boundary conditions $\rho_\pm(t)$.


\section{ASEP with open boundaries}
\label{sec:asep}

The asymmetric simple exclusion process (ASEP) 
with open boundary conditions is the Markov process on
the configuration space 
\begin{align}
\Om_N := \big\{\eta=(\eta_1,\eta_2,\ldots,\eta_N),\ \eta_i\in\{0,1\}\big\},
\end{align}
with the infinitesimal generator
\begin{equation}
\label{eq:generator}
L_Nf = \la_0 L_\exc f + L_- f + L_+ f, 
\end{equation}
where 
$\la_0>0$, $f$ is any function on $\Om_N$,
$L_\exc$ is the generator of the simple exclusion: 
\begin{equation}
\label{eq:exc}
\begin{split}
&L_{\exc}f :=\sum_{i=1}^{N-1} c_{i,i+1} 
\left[ f\left(\eta^{i,i+1}\right)-f(\eta) \right], \\
&c_{i,j} := p\eta_i(1-\eta_j) + (1-p)\eta_j(1-\eta_i), 
\end{split}
\end{equation}
where $1/2<p\le1$, $\eta^{i,i+1}$
is the configuration obtained from $\eta$ 
upon exchanging $\eta_i$ and $\eta_{i+1}$. 
$L_\pm$ are the generators of 
creation/annihilation processes at the boundaries $i=1$ and $i=N$:
\begin{equation}
\label{eq:bdop}
\begin{split}
L_-f &:= \left[\alpha({1-\eta_1}) + \gamma{\eta_1}\right] 
\big[f(\eta^1)-f(\eta)\big], \\
L_+f &:= \left[\delta({1-\eta_N}) + \beta{\eta_N}\right] 
\big[f(\eta^N)-f(\eta)\big],
\end{split}
\end{equation}
where $\alpha,\gamma,\beta,\delta >0$, $\eta^i$ is the configuration 
obtained from $\eta$ by shifting the status
at site $i$ from $\eta_i$ to $1-\eta_i$. 


\begin{rem}[Stationary states]
\label{rem:ss}
In general the stationary probability distribution has a complicate structure
\cite{Derrida93,Schutz99,Uchi04}.
But for the choice of the boundary rates such that (see \cite[Proposition 2]{Brak06})
\begin{equation}
\label{eq:4}
\bar p := 2p -1 =
\frac{(\alpha+\beta+\gamma+\delta)(\alpha\beta-\gamma\delta)}
{(\alpha+\delta)(\beta+\gamma)},
\end{equation}
the stationary state is given by the Bernoulli product measure with density
\begin{equation}
\label{eq:6}
\rho(\alpha,\beta,\gamma,\delta) =
\frac{\alpha+\delta}{\alpha+\beta+\gamma+\delta}.
\end{equation}
\end{rem}

In this article, we consider time-dependent parameters
$(\alpha,\gamma,\beta,\delta)(t)$.
As in \eqref{eq:generator}, define the Markov generator 
\begin{equation}\label{eq:generator1}
L_{N,t} = \la_0L_\exc + L_{-,t} + L_{+,t}, \quad t\ge0, 
\end{equation}
where $L_{\pm,t}$ are the operators defined by \eqref{eq:bdop}.

We multiply $L_{N,t}$ by $N^{1+a}$ for some $a>0$ and study the macroscopic limit of the corresponding dynamics. 
We now distinguish two cases: the \emph{Liggett boundaries}
where we only speed up the generator by $N^{1+a}$
to the quasi-static time scale, and the
\emph{general reversible boundaries} where there is a further speeding
of the symmetric exclusion and of the boundary rates. 

\subsection{Liggett boundaries}

Take $\la_0=1$, $1/2<p\le1$ and two $\mathcal C^1$ functions 
$\rho_\pm: [0,\infty)\to(0,1)$. 
Choose the parameters in \eqref{eq:generator1} as
\begin{equation}
\label{eq:liggett}
\begin{aligned}
&\alpha(t) := p \rho_-(t), \qquad \gamma(t) := (1-p) (1-\rho_-(t))\\
&\beta(t) := p (1-\rho_+(t)), \qquad \delta(t) := (1-p) \rho_+(t).
\end{aligned}
\end{equation}
Under this choice, if $\rho_+(t) = \rho_-(t) = \rho(t)$, then \eqref{eq:4} is
satisfied and by \eqref{eq:6} we have
\begin{align}
\rho\big(\alpha(t),\beta(t),\gamma(t),\delta(t)\big)= \rho(t).
\end{align}



First introduced by Liggett \cite{Liggett75} with time-independent $\rho_\pm$,
this choice of boundary rates corresponds to the projection
on the finite interval $\{1,2,\ldots,N\}$ of the infinite
ASEP dynamics with Bernoulli distribution
with density $\rho_-$ on the left of $1$, and with density
$\rho_+$ on the right of $N$ (see formulas (5) and (6) in \cite{Baha12}).
This choice allows an effective coupling between the dynamics with open boundaries and the infinite dynamics, which plays a central role in the proof of the corresponding hydrodynamic limit \cite{Baha12}.

\subsection{Reversible boundaries}
\label{sec:reversible-boundaries}

In the Liggett case the boundary rates
are chosen in accordance with $p$.
To deal with more general cases in which the boundary rates are independent of $p$,
and model the contact with \emph{reversible} reservoirs of particles,
we need to speed up the boundary operators. 
We also need to add symmetric exchanges at a higher rate.
Let $\sigma_N$ and $\widetilde\sigma_N$ be two sequences satisfying
\begin{align}\label{eq:assp-ge}
\lim_{N\to\infty} \frac{\sigma_N}N = 0, \quad \lim_{N\to\infty} \widetilde\sigma_N = \infty, \quad \lim_{N\to\infty} \frac{\sigma_N\widetilde\sigma_N}{\sqrt N} = \infty. 
\end{align}
The boundary rates are defined by the choice of two $\mathcal C^1$ functions
$\rho_\pm:[0,\infty)\to(0,1)$ and two $\mathcal C^1$ functions
$\lambda_\pm:[0,\infty)\to \mathbb R_+$, and
\begin{equation}
\label{eq:1}
\begin{split}
&\alpha(t) := \widetilde\sigma_N\lambda_-(t) \rho_-(t), \qquad
\gamma(t) := \widetilde\sigma_N \lambda_-(t) (1-\rho_-(t)),\\
&\beta(t) := \widetilde\sigma_N\lambda_+(t)(1-\rho_+(t)), \qquad
\delta(t) := \widetilde\sigma_N \lambda_+(t)\rho_+(t).
\end{split}
\end{equation}
In order to speed up the symmetric part of the bulk dynamics we
fix $\bar p\in(0,1]$ and we choose
\begin{equation}
\begin{split}
\la_0 = \sigma_N, \quad p = \frac12 + \frac{\bar p}{2\sigma_N},
\end{split}
\end{equation}
The resulting generator of the bulk dynamics is
\begin{equation}
\la_0L_\exc = \bar pL_{\text{tasep}} + \frac{\sigma_N-\bar p}2 L_{\text{ssep}},
\end{equation}
where $L_{\text{ssep}}$ and $L_{\text{tasep}}$ are respectively given by 
\begin{equation}\label{eq:generator2}
\begin{split}
L_{\text{ssep}}f &:= \sum_{i=1}^{N-1} \left[ f\left(\eta^{i,i+1}\right)-f(\eta) \right], \\
L_{\text{tasep}}f &:= \sum_{i=1}^{N-1} \eta_i(1-\eta_{i+1}) 
\left[ f\left(\eta^{i,i+1}\right)-f(\eta) \right].
\end{split}
\end{equation}
This dynamic should not be confused with
the so called \emph{weakly asymmetric exclusion},
since with the choice of the parameters we make in Theorem
\ref{thm:reversible} below the asymmetry is always strong.


\section{Quasi-static evolution}
\label{sec:quasistatic}

\subsection{Quasi-static Burgers equation}
\label{sec:quasi-static-burgers}

For any $\ve>0$, let $\rho^\ve \in L^\infty([0,1]\times\R_+)$ be the \emph{entropy solution} to the initial--boundary problem of the scalar conservation law 
\begin{equation}
\label{eq:2}
\left\{
\begin{aligned}
&\,\varepsilon \partial_t \rho^{\varepsilon}(x,t) + 
\partial_x J(\rho^{\varepsilon}(x,t)) = 0, 
\quad J(\rho)=\bar p\rho(1-\rho),\\
&\,\rho^{\varepsilon}|_{x=0} = \rho_-, 
\quad \rho^{\varepsilon}|_{x=1} =\rho_+, 
\quad \rho|_{t=0} = \rho_0, 
\end{aligned}
\right.
\end{equation}
where $\rho_\pm\in\mathcal C^1(\R_+)$ and $\rho_0 \in L^\infty([0,1])$. 
The definition, existence and uniqueness
of the entropy solution follow from
the work of Otto \cite{Otto96}, by the characterization
of the \emph{boundary entropy--entropy flux pairs} defined below. 

\begin{defin}\label{bLef}
A boundary Lax entropy--entropy flux pair for \eqref{eq:2} is a couple of $\mathcal C^2$ functions $(\h,\q): [0,1]\times\R \to \R^2$ such that 
\begin{align}
J'(u)\partial_u\h(u,w)= \partial_u\q(u,w), \quad \h(w,w)=\q(w,w)=\partial_u\h(w,w)=0
\end{align}
for all $u\in[0,1]$ and $w\in\R$. 
Moreover, we say that the pair $(\h,\q)$ is convex if
$\h(u,w)$ is convex in $u$ for all $w\in\R$. 
\end{defin}

Otto's boundary conditions read in this case as
\begin{equation}
\label{eq:otto}
\begin{split}
& \mathop{\text{esslim}}_{r\to 0^+} \int_0^\infty \q
\big(\rho^{\varepsilon} (r,t),\rho_-(t)\big) \beta(t) dt \le 0, \\
& \mathop{\text{esslim}}_{r\to 0^+} \int_0^\infty \q
\big(\rho^{\varepsilon} (1-r,t),\rho_+(t)\big) \beta(t) dt \ge 0,
\end{split}
\end{equation}
for any boundary entropy flux $\q$ for a convex entropy pair, and
test function $\beta\in \mathcal C_c(\R_+)$ such that $\beta \ge 0$.
In the case of bounded variation solutions, this coincides with the
Bardos--Leroux--N\'ed\'elec boundary conditions \cite{BNL}.

The entropy solution $\rho \in L^\infty([0,1]\times\R_+)$ to the \emph{quasi-static conservation law} 
\begin{align}
\label{eq:qscl}
\partial_x J(\rho(x,t)) = 0, \ x\in(0,1), \quad \rho|_{x=0} = \rho_-, \quad \rho|_{x=1} =\rho_+,
\end{align}
is then defined as the weak-$\star$ limit, for $\ve\to0^+$ of $\rho^\ve$. 
The existence and uniqueness of
this limit is proven in \cite{mox21} as explained below.
Define the \emph{critical line} 
\begin{align}
\label{eq:critical}
\Theta := \{(a,b)\in(0,1)^2; a<1/2, a+b=1\}. 
\end{align}
If $(\rho_-,\rho_+)(t) \notin \Theta$, 
then $\rho$ is constant in $x$ and it is independent
of the initial condition $\rho_0$ in \eqref{eq:2}. 
It is explicitly given by 
\begin{align}
\label{eq:phases}
\rho(x,t) = 
\begin{cases}
\rho_-(t), &\text{if}\ \rho_-(t)<1/2,\:\rho_-(t)+\rho_+(t)<1\ (\text{low density}), \\
\rho_+(t), &\text{if}\ \rho_-(t)>1/2,\:\rho_-(t)+\rho_+(t)>1\ (\text{high density}), \\
1/2, &\text{if}\ \rho_-(t) \ge 1/2,\:\rho_+(t) \le 1/2\ (\text{max current}). 
\end{cases}
\end{align}
Moreover, $\rho$ satisfies the variational conditions
(\cite{Derrida93,Schutz99,Baha12,mox21}): 
\begin{align}\label{eq:variational}
J(\rho(x,t)) = 
\begin{cases}
\sup \{J(\rho); \rho \in [\rho_+(t),\rho_-(t)]\}, 
&\text{if }\rho_-(t) > \rho_+(t), \\
\inf \{J(\rho); \rho\in[\rho_-(t), \rho_+(t)]\}, 
&\text{if } \rho_-(t) \le \rho_+(t). 
\end{cases}
\end{align}
The characterization of the limit $\rho$ is open when $(\rho_-,\rho_+)(t) \in \Theta$.
The variational formula \eqref{eq:variational} remains true,
but $\rho$ may attain two values $\rho_-(t)$ and $\rho_+(t)$. 
For time-independent boundary data $(\rho_-,\rho_+) \in \Theta$, \eqref{eq:qscl} has infinitely many stationary entropy solutions: for each $x^*\in[0,1]$, $\rho_{x^*}(x):=\rho_-\mathbf1_{[0,x^*)}(x)+\rho_+\mathbf1_{[x^*,1]}(x)$ is one of them. 
The choice of $x^*$ in the limit $\ve\to0+$ may rely on the initial data $\rho_0$ in \eqref{eq:2}.
Time-dependent boundary data could further complicate the problem by creating a moving shock.

\subsection{Quasi-static hydrodynamics}
\label{sec:quasi-stat-hydr}

For some $a>0$, denote by $\eta(t)=(\eta_1(t),\ldots,\eta_N(t)) \in \Om_N$ the process generated by $N^{1+a}L_{N,t}$ and some initial distribution $\mu_{N,0}$.
For $i=0$, ..., $N$, let $\chi_{i,N}$ be the indicator function 
\begin{equation}\label{eq:indicator}
\chi_{i,N}(x) :=
\1_{\left\{\left[\frac iN-\frac1{2N}, \frac iN+\frac1{2N}\right) \cap [0,1]\right\}}(x),
\quad x\in[0,1]. 
\end{equation}
For each $N$, define the empirical density $\zeta_N = \zeta_N(x,t)$ as 
\begin{align}
\label{eq:empirical}
\zeta_N(x,t) := \sum_{i=1}^N \chi_{i,N}(x) \eta_i(t), \quad (x,t) \in [0,1]\times[0,\infty). 
\end{align}
Our aim is to show that, as $N \to \infty$, $\zeta_N$ converges to 
the homogeneous density $\rho(t)$ given by \eqref{eq:phases}.
Observe that for reversible boundaries case, the boundary conditions
of the macroscopic quasi-static equation
do not depend on the choice of $\lambda_\pm(t)$.

Denote by $\mathbb P_N$ the distribution on the path space of $\eta(\cdot)$. 
Let $\E_N$ be the corresponding expectation. 
For $\rho\in[0,1]$, denote by $\nu_\rho$
the product Bernoulli measure with density $\rho$.
Given a local function $f$ on $\Om_N$, denote 
\begin{align}
\langle f \rangle(\rho) := \int fd\nu_\rho, \quad \forall\,\rho\in[0,1].
\end{align}

\begin{thm}[Liggett boundaries]
\label{thm:liggett}
Assume that one of the following holds almost everywhere in a time interval $[s,s']$: 
\begin{equation}
\label{eq:assp-t}
\left\{
\begin{aligned}
&\,\rho_-(t)-\rho_+(t)\le0 \\
&\,(\rho_-,\rho_+)(t) \notin \Theta
\end{aligned}
\right.,
\qquad \text{or} \qquad
\left\{
\begin{aligned}
&\,\rho_-(t)-\rho_+(t)\ge0\\
&(\rho_-,\rho_+)(t) \notin \Theta
\end{aligned}
\right.,
\end{equation}
where $\Theta$ is given by \eqref{eq:critical}. 
Suppose further that $a>1/2$, then for any sequence of initial distributions $\{\mu_{N,0};N\ge1\}$, the following convergence holds in probability 
\begin{equation}
\label{eq:limit}
\lim_{N\to\infty} \int_s^{s'} 
\frac1N\sum_{i=0}^{N-\ell} \vf\left(\frac iN,t\right)f\big(\tau_i\eta(t)\big)dt 
= \int_s^{s'}\!\!\!\int_0^1 \vf(x,t) \langle f\rangle\big(\rho(t)\big)dx\,dt
\end{equation}
for any $\ell$, any local function $f$ supported on $\{1,\ldots,\ell\}$
and $\vf \in \mathcal C([0,1]\times\R_+)$,
where $\tau_i$ is the shift operator and $\rho(t)$ is given by \eqref{eq:phases}.
\end{thm}

\begin{rem}
Condition \eqref{eq:assp-t} means that $\rho_-(t)-\rho_+(t)$
keeps its sign for $t\in [s,s']$ and prevents
$(\rho_-(t),\rho_+(t))$ from staying in $\Theta$
for an interval of time of positive measure.
This is necessary when applying the variational
formula and the coupling argument, see Section \ref{sec:proof}.
\end{rem}

\begin{thm}[Reversible boundaries]
\label{thm:reversible}
Assume that one of the conditions in \eqref{eq:assp-t}
holds almost everywhere in $[s,s']$.
In additional, assume \eqref{eq:assp-ge} and 
\begin{align}\label{eq:assp-a-ge}
\lim_{N\to\infty} N^{a-\frac12}\sigma_N = \infty. 
\end{align}
Then, the convergence in \eqref{eq:limit} holds with $\rho(t)$
given by \eqref{eq:phases}.
\end{thm}


\begin{rem}
In both cases we need a certain amount of \emph{symmetric exchanges}
in the bulk
that contribute to the entropy production estimates (see Section \ref{sec:dir}).
In the Liggett case
the condition $a>1/2$ provides enough symmetric exchanges.
In the general reversible case, we accelerate the symmetric exchanges
with $\sigma_N$, and as long as condition \eqref{eq:assp-a-ge} is satisfied
there is enough entropy production.
\end{rem}

\begin{rem}
For the asymmetric current $J_{i,i+1}=\bar p\eta_i(1-\eta_{i+1})$, the quasi-static hydrodynamic limit holds also on $\Theta$: for any $\vf\in\mathcal C([0,1]\times\R_+)$, 
\begin{equation}
\label{eq:limcur}
\begin{aligned}
\lim_{N\to\infty} \int_s^{s'} 
\frac1N\sum_{i=1}^{N-1} \vf\left(\frac iN,t\right)J_{i,i+1}(t)dt 
=\:\int_s^{s'}\!\!\!\int_0^1 \vf(x,t)J(t)dx \,dt
\end{aligned}
\end{equation}
holds in probability, where $J(t)$ is given by \eqref{eq:variational}.
\end{rem}

\section{Proof of the main theorems}
\label{sec:proof}

In this section we prove Theorem \ref{thm:liggett} and \ref{thm:reversible}
by several main steps that will be proven in the following sections.
In the following we fix some $T>0$ and denote
$\Sigma_T = [0,1]\times[0,T]$. 

For $k \le N$, define the left-sided uniform block average 
\begin{align}
\label{eq:block}
\bar\eta_{i,k} := \frac1k\sum_{j=0}^{k-1} \eta_{i-j}, \quad 
\forall\,i=k,k+1,\ldots,N.
\end{align}
We choose some \emph{mesoscopic scale} $K=K(N)$ such that $K\to\infty$ and $K=o(N)$ as $N\to\infty$.
Modify the empirical process $\zeta_N$ in \eqref{eq:empirical} as 
\begin{align}
\label{eq:empirical0}
\rho_N(x,t)=\rho_{N,K(N)}(x,t) := \sum_{i=K}^N \chi_{i,N}(x) \bar\eta_{i,K}(t), \quad (x,t) \in [0,1]\times\R_+.
\end{align}

\subsection{Young measures}

By a Young measure $\nu$ we mean a family $\{\nu_{x,t}; (x,t)\in\Sigma_T\}$ 
of probability measures on $[0,1]$ such that the mapping 
\begin{align*}
(x,t) \mapsto \int_0^1 f(x,t,y)\nu_{x,t}(dy) 
\end{align*}
is measurable for all $f \in \mathcal C(\Sigma_T\times[0,1])$. 
Denote by $\mathcal Y$ the set of all Young measures, 
endowed with the vague topology: a sequence $\{\nu^n\}$ converges to some $\nu\in\mathcal Y$ as $n\to\infty$, 
if for all $g \in \mathcal C([0,1])$ and $\varphi\in\mathcal C(\Sigma_T)$,
\begin{align}
\lim_{n\to\infty} \iint_{\Sigma_T} \varphi(x,t)dx\,dt\int_0^1 g\,d\nu^n_{x,t}
= \iint_{\Sigma_T} \varphi(x,t)dx\,dt\int_0^1 g\,d\nu_{x,t}. 
\end{align}
Under this topology, $\mathcal Y$ is metrizable, separable and compact.

As $\rho_N\in[0,1]$ is uniformly bounded, the corresponding Dirac type Young measures 
\begin{align}
\nu^N := \big\{\nu_{x,t}^N=\delta_{\rho_N(x,t)}; (x,t)\in\Sigma_T\big\} \in \mathcal Y.
\end{align} 
Denote by $\mathfrak Q_N$ the distribution of $\nu^N$ on $\mathcal Y$. 

Since $\mathcal Y$ is compact, the sequence $\{\mathfrak Q_N, N\ge1\}$ is tight, 
thus we can extract a subsequence $\{N_h,h\ge1\}$ such that $\mathfrak Q_{N_h}$ converges weakly to some measure $\mathfrak Q$ on $\mathcal Y$,
namely for all $g \in \mathcal C([0,1])$ and $\varphi\in\mathcal C(\Sigma_T)$,
\begin{equation}\label{eq:33}
\begin{aligned}
\lim_{h\to\infty} \E_{N_h} 
\left[ \iint_{\Sigma_T} \varphi(x,t)g\big(\rho_{N_h}(x,t)\big)dx\,dt \right] \\
= \lim_{h\to\infty} E^{\mathfrak Q_{N_h}} 
\left[ \iint_{\Sigma_T} \varphi(x,t)dx\,dt\int g\,d\nu_{x,t} \right] \\
= E^{\mathfrak Q} \left[ \iint_{\Sigma_T} \varphi(x,t)dx\,dt\int g\,d\nu_{x,t} \right].
\end{aligned}
\end{equation}
Hereafter, we fix such a subsequence and relabel $\mathfrak Q_{N_h}$ by $\mathfrak Q_N$.

\subsection{Proof of the quasi-static limit}

In both Liggett and reversible cases, we replace the function $f(\tau_i\eta)$ in \eqref{eq:limit} with the mesoscopic canonical average $\langle f \rangle(\bar\eta_{i,K})$ by some local equilibrium argument.
Then, we prove that $\mathfrak Q$, the limit distribution of the random Young measures associated with $\bar\eta_{i,K}$, is concentrated on the quasi-static profile $\rho=\rho(t)$ given by \eqref{eq:phases}.
The proof is divided into several steps.

\subsubsection{Local equilibrium}

The next lemma allows us to replace $\eta_i$ with its mesoscopic
block average defined by \eqref{eq:block}.
It holds for both Liggett and reversible cases.

\begin{lem}
\label{lem:local-equi}
If $1 \ll K \ll \sqrt N$, then for any $\ell$ and local function
$f=f(\eta_1,\ldots,\eta_\ell)$,
\begin{align}
\label{eq:local-equi}
\lim_{N\to\infty} \E_N \left[ \int_0^T \frac1N \left| \sum_{i=0}^{N-\ell} \vf \left( \frac iN,t \right) f(\tau_i\eta) - \sum_{i=K}^N \vf \left( \frac iN,t \right) \langle f \rangle\big(\bar\eta_{i,K}\big) \right| dt \right] = 0.
\end{align}
\end{lem}

Recall the empirical process $\rho_{N,K}$ associated with $\bar\eta_{i,K}$ and the distribution $\mathfrak Q$ satisfying \eqref{eq:33}.
With Lemma \ref{lem:local-equi}, it suffices to prove that
\begin{align}
\label{eq:limit0}
\mathfrak Q\big\{\nu_{x,t}=\delta_{\rho(t)}, \ \Sigma_T-\text{a.e.}\big\}=1,
\end{align}
for some $K$ such that $1 \ll K \ll \sqrt N$,

\subsubsection{Balanced dynamics}
Recall that the dynamics is called \emph{balanced}
when $\rho_-(t)=\rho_+(t)$ for $t\in[0,T]$.

\begin{prop}
\label{prop:hl-balan}
Suppose $\rho_-(t)=\rho_+(t)=\rho(t)$ for $t\in[0,T]$.
Then \eqref{eq:limit0} holds if
\begin{align}
  &\text{for Liggett case} \quad 1 \ll K \ll \min\{N^a,N\};\\
  &\text{for reversible case} \quad 1 \ll K \ll \min\{N^a\sigma_N,\widetilde\sigma_N\sigma_N,N\}.
\end{align}
\end{prop}

Hereafter, we fix some $K$ such that $1 \ll K \ll \sqrt N$, then conditions in both Lemma \ref{lem:local-equi} and Proposition \ref{prop:hl-balan} are fulfilled.

\subsubsection{Macroscopic current}

The macroscopic current is defined by 
\begin{align}
\label{eq:quasi-current0}
\mathcal J(T) := E^{\mathfrak Q} \left[ \iint_{\Sigma_T} dx\,dt\int Jd\nu_{x,t} \right], \quad J(\rho)=\bar p\rho(1-\rho).
\end{align}
Lemma \ref{lem:local-equi} with $f=\bar p\eta_1(1-\eta_2)$, $\varphi\equiv1$
and formula \eqref{eq:33} yield that
\begin{equation}
\label{eq:quasi-current1}
\begin{aligned}
\mathcal J(T) &= \lim_{N\to\infty} \E_N \left[ \int_0^T \frac1N\sum_{i=K}^N J\big(\bar\eta_{i,K}\big)dt \right]\\
&= \lim_{N\to\infty} \E_N \left[ \int_0^T \frac1N\sum_{i=1}^{N-1} J_{i,i+1}(t)dt \right], \quad J_{i,i+1}=\bar p\eta_i(1-\eta_{i+1}).
\end{aligned}
\end{equation}
In view of Proposition \ref{prop:hl-balan}, the current for balanced dynamics reads
\begin{align}
\mathcal J(T)=\int_0^T J\big(\rho(t)\big)dt, \quad \text{when}\ \rho_\pm(t)=\rho(t).
\end{align}

\subsubsection{Coupling}

The next two lemmas are proved by coupling the unbalanced dynamics with the balanced one with boundary rate between $\rho_-(t)$ and $\rho_+(t)$.

\begin{lem}\label{lem:nu}
If $\rho_-(t)\le\rho_+(t)$ for all $t\in[0,T]$, then $\mathfrak Q$-almost surely
\begin{align}
\label{eq:nu}
\nu_{x,t}\big\{\rho\in[\rho_-(t),\rho_+(t)]\big\} = 1,
\quad (x,t)-\text{a.e. in }\Sigma_T. 
\end{align}
If $\rho_-(t)\ge\rho_+(t)$ on $[0,T]$, the above holds with $[\rho_+(t), \rho_-(t)]$. 
\end{lem}

\begin{lem}\label{lem:j}
If $\rho_-(t) \ge \rho_+(t)$ for all $t\in[0,T]$, 
\begin{align}\label{eq:j1}
\mathcal J(T) \ge \int_0^T \sup\big\{J(\rho); \rho_+(t) \le \rho \le \rho_-(t)\big\}dt. 
\end{align}
If $\rho_-(t) \le \rho_+(t)$ for all $t\in[0,T]$, 
\begin{align}\label{eq:j2}
\mathcal J(T) \le \int_0^T \inf\big\{J(\rho); \rho_-(t) \le \rho \le \rho_+(t)\big\}dt. 
\end{align}
\end{lem}


By using \eqref{eq:quasi-current0} and Lemma \ref{lem:nu} and \ref{lem:j}
we prove the following proposition:

\begin{prop}
\label{prop:j}
Suppose that $\rho_-\ge\rho_+$, a.e. in $[0,T]$ or $\rho_-\le\rho_+$, a.e. in $[0,T]$, then the equalities hold in \eqref{eq:j1} and \eqref{eq:j2}.
\end{prop}

From Proposition \ref{prop:j} it follows \eqref{eq:limcur}. 

\subsubsection{Quasi-static hydrodynamics}

We now conclude the proof of the quasi-static limits from Lemma
\ref{lem:local-equi}, \ref{lem:nu} and Proposition \ref{prop:j}.

\begin{proof}[Proofs of Theorem \ref{thm:liggett} and \ref{thm:reversible}]
Recall that $\mathfrak Q$ is the limit distribution, as $N\to\infty$, of the Young measures associated with $\rho_N$ in \eqref{eq:empirical0}.
We first prove that if $\rho_- - \rho_+$ keeps its sign in $[0,T]$, 
\begin{align}
\label{eq:concentration}
\mathfrak Q\big\{\nu_{x,t} = \delta_{\rho(t)},\ (x,t)-\text{a.e. in }\Sigma_T\big\} = 1. 
\end{align}
with $\rho(t)$ the entropy solution of \eqref{eq:qscl}
characterized by \eqref{eq:phases}.
Indeed, if $\rho_-(t) \ge \rho_+(t)$, then is a
unique $\rho(t)$ solution of
\begin{align}
J\big(\rho(t)\big) =
\sup \big\{J(\rho), \rho_+(t) \le \rho \le \rho_-(t)\big\}. 
\end{align}
By Proposition \ref{prop:j}, $\mathcal J(T)=\int_0^T J(\rho(t))dt$. 
Together with \eqref{eq:quasi-current0} and \eqref{eq:nu}, 
\begin{equation}
E^{\mathfrak Q} \left[ \iint_{\Sigma_T} dx\,dt\int_{\rho_{+}(t)}^{\rho_-(t)}
\big[J(\rho)-J(\rho(t))\big]\nu_{x,t}(d\rho) \right] = 0.
\end{equation}
Since $J(\rho)-J(\rho(t))\le0$ for $\rho\in[\rho_+(t),\rho_-(t)]$, 
the Young measure can only be concentrated in its zero set, so \eqref{eq:concentration} holds.
The argument is similar for the case $\rho_-(t) \le \rho_+(t)$ and $(\rho_-,\rho_+)(t) \notin \Theta$. 

For $f=f(\eta_1,\ldots,\eta_\ell)$ and $\varphi\in\mathcal C(\Sigma_T)$,
using Lemma \ref{lem:local-equi} and \eqref{eq:concentration}, 
\begin{align}
\lim_{N\to\infty} \E_N \left[ \int_0^T \frac1N\sum_{i=0}^{N-\ell} \vf \left( \frac iN,t \right) f(\tau_i\eta)dt \right]
= \iint_{\Sigma_T} \varphi(x,t)\langle f \rangle\big(\rho(t)\big)dx\,dt,
\end{align}
along the chosen subsequence.
The uniqueness of the entropy solution implies the uniqueness of the limit point
$\mathfrak Q$ and thus the convergence in probability.

For general $[s,s']$ where $\rho_--\rho_+$ does not change sign,
denote by $\mu_{N,t}$ the distribution of $\eta(t)$ on $\Om_N$.
Consider the process generated by $N^{1+a}L_{N,t}$, $0 \le t \le s'-s$ and initial distribution $\mu_{N,s}$. 
As the arguments above are valid for any fixed time interval and initial distribution, the result holds almost surely in $[s,s']$. 
\end{proof}

The remaining part of this article is organized as follows.
We first present in Section \ref{sec:currents} and \ref{sec:dir} some estimates on the microscopic currents and the Dirichlet forms.
These bounds are used repeatedly throughout the article.
Then in Section \ref{sec:local-equi} we prove Lemma \ref{lem:local-equi} as a consequence of a general one-block estimate.

In Section \ref{sec:balance}, we treat the balanced dynamics and prove Proposition \ref{prop:hl-balan}.
The proof exploits the boundary entropy--entropy flux pair
defined in Definition \ref{bLef}.
In fact in the balanced case, we expect equality in \eqref{eq:otto},
and we see in Section \ref{sec:bentropy} 
that the time average of the boundary entropy flux 
on the microscopic scale is negligible, 
see Proposition \ref{prop:bd-ent-li} and \ref{prop:bd-ent-ge}.
This is the hard part of the proof.
Finally in Section \ref{sec:coupling} we deal with the unbalanced dynamics.
We prove there Lemma \ref{lem:nu} and \ref{lem:j} by constructing a coupling between the unbalanced and balanced dynamics.

\section{Microscopic currents}
\label{sec:currents}

The \emph{microscopic currents} $j_{i,i+1}$ associated to the generator $L_{N,t}$ are defined by the conservation law $L_{N,t}[\eta_i] = j_{i-1,i}-j_{i,i+1}$ and they are equal to 
\begin{equation}
\label{eq:current-li}
j_{i,i+1} = 
\begin{cases}
p\rho_-(t) - \big[p\rho_-(t)+(1-p)(1-\rho_-(t))\big]\eta_1, &i=0, \\
J_{i,i+1} + \frac{1-\bar p}2(\eta_i - \eta_{i+1}), \quad J_{i,i+1}=\bar p\eta_i(1-\eta_{i+1}), &1 \le i \le N-1, \\
\big[p(1-\rho_+(t))+(1-p)\rho_+(t)\big]\eta_N - (1-p)\rho_+(t), &i=N. 
\end{cases}
\end{equation}
in the case of Liggett boundary rates and 
\begin{equation}
\label{eq:current-ge}
j_{i,i+1} = 
\begin{cases}
\widetilde\sigma_N\la_-(t)(\rho_-(t)-\eta_1), &i=0, \\
J_{i,i+1} + \frac{\sigma_N-\bar p}2(\eta_i - \eta_{i+1}), &1 \le i \le N-1, \\
\widetilde\sigma_N\la_+(t)(\eta_N-\rho_+(t)), &i=N. 
\end{cases}
\end{equation}
in the case of reversible boundary rates. 

Follow the argument as in \cite[Section 2]{DO}, 
for $i=1$, ... $N-1$ define the counting processes 
associated to the process $\{\eta(t)\}_{t\ge 0}$ generated by $N^{1+a}L_{N,t}$ by 
\begin{equation}\label{eq:counting1}
\begin{aligned}
&h_+(i,t) := \text{number of jumps } i \to i+1 
\text{ in } [0,t]. \\
&h_-(i,t) := \text{number of jumps } i+1 \to i 
\text{ in } [0,t]. \\
&h(i,t) := h_+(i,t) - h_-(i,t). 
\end{aligned}
\end{equation}
These definitions extend to the boundaries $i=0$ and $i=N$ as 
\begin{equation}\label{eq:counting2}
\begin{aligned}
&h_+(0,t) := \text{number of particles created at } 1 
\text{ in } [0,t], \\
&h_-(0,t) := \text{number of particles annihilated at } 1 
\text{ in } [0,t], \\
&h_+(N,t) := \text{number of particles annihilated at } N 
\text{ in } [0,t], \\
&h_-(N,t) := \text{number of particles created at } N 
\text{ in } [0,t]. 
\end{aligned}
\end{equation}
The conservation law is microscopically given by 
\begin{equation}\label{eq:claw0}
\eta_i(t) - \eta_i(0) = h(i-1,t) - h(i,t), \quad \forall\,i= 1,\dots,N. 
\end{equation}
Furthermore, for $i=0,\dots,N$ there is a martingale $M_i(t)$ such that 
\begin{align}
\label{eq:5-0}
h(i,t) = N^{1+a} \int_0^t j_{i,i+1}(s)ds +M_i(t). 
\end{align}
As $|\eta_i(t)| \le 1$, \eqref{eq:claw0} yields that $|h(i,t) - h(i',t)| \le |i-i'|$. 
Therefore, 
\begin{align}
\label{eq:5-1}
\E_N \left[\int_0^t \big(j_{i,i+1}(s) - j_{i',i'+1}(s)\big)ds\right] = O(N^{-a}), \quad \forall\,i,i'=0,1,\ldots,N. 
\end{align}

Now fix $T>0$ and recall the macroscopic current defined
in \eqref{eq:quasi-current0}.
By \eqref{eq:quasi-current1}, \eqref{eq:current-li} and \eqref{eq:current-ge},
\begin{align}
\label{eq:5-2}
\left| \E_N \left[ \int_0^T \frac1N\sum_{i=1}^{N-1} j_{i,i+1}(t)dt \right]
- \mathcal J(T) \right| 
\mathop{\longrightarrow}_{N\to\infty} 0.
\end{align}
Hence, one concludes from \eqref{eq:5-1} and \eqref{eq:5-2} that 
\begin{align}
\label{eq:quasi-current}
\mathcal J(T) = \lim_{N\to\infty} \E_N \left[ \int_0^T j_{i,i+1}(t)dt \right], \quad \forall i=0,1,\ldots,N.
\end{align}

In particular for reversible case, since $i=0$, $N$ are included in \eqref{eq:5-1} and \eqref{eq:quasi-current},
\begin{equation}\label{eq:current-bd}
\begin{aligned}
&\widetilde\sigma_N\E_N \left[ \int_0^T \la_-(t)\big(\eta_1(t) - \rho_-(t)\big)dt \right] = -\mathcal J(T) + O(N^{-a}), \\
&\widetilde\sigma_N\E_N \left[ \int_0^T \la_+(t)\big(\eta_N(t) - \rho_+(t)\big)dt \right] = \mathcal J(T) + O(N^{-a}). 
\end{aligned}
\end{equation}
Since $\inf_{[0,T]} \la_\pm > 0$, we obtain the boundary estimates 
\begin{align*}
  \left| \E_N \left[ \int_0^T \big(\eta_1(t) - \rho_-(t)\big)dt \right] \right| \le \frac C{\widetilde\sigma_N}, \quad 
  \left| \E_N \left[ \int_0^T \big(\eta_N(t) - \rho_+(t)\big)dt \right] \right| \le \frac C{\widetilde\sigma_N}.
\end{align*}
Since $T$ is arbitrary, this means the acceleration of the boundary rates
($\widetilde\sigma_N\to\infty$) forces the boundary conditions
$\rho_\pm (t)$ in this microscopic sense.

\section{Microscopic entropy production: bounds on
the Dirichlet forms}
\label{sec:dir}

For any $\rho\in[0,1]$, let $\nu_\rho$ be the product Bernoulli measure on $\Om_N=\{0,1\}^N$ with rate $\rho$. 
In particular, for the time dependent parameters $\rho_\pm=\rho_\pm(t)$ denote 
\begin{equation}\label{eq:bbern}
\nu_{\pm,t}(\eta) := \nu_{\rho_\pm(t)}(\eta) =
\prod_{i=1}^N\,\rho_\pm(t)^{\eta_i}\big[1-\rho_\pm(t)\big]^{1-\eta_i}, \quad \forall\,\eta \in \Om_N. 
\end{equation}
Recall that $\eta(t)$ is the process generated by $N^{1+a}L_{N,t}$ 
and denote by $\mu_{N,t}$ the distribution of $\eta(t)$ in $\Omega_N$. 
For $N \ge 2$, define the Dirichlet form 
\begin{align}
\label{eq:dir-exc}
\D_{\exc,N}(t) := \frac12\sum_{\eta\in\Om_N} \sum_{i=1}^{N-1} 
\left(\sqrt{\mu_{N,t}(\eta^{i,i+1})} - \sqrt{\mu_{N,t}(\eta)}\right)^2. 
\end{align}
Let $f^\pm_{N,t}$ be the density of $\mu_{N,t}$ with respect to $\nu_{\pm,t}$ 
and define the boundary Dirichlet forms as 
\begin{equation}
\begin{aligned}
&\D_{-,N}(t) := \frac12\sum_\eta \rho_-^{1-\eta_1}(1-\rho_-)^{\eta_1}
\left(\sqrt{f_{N,t}^-(\eta^1)} - \sqrt{f_{N,t}^-(\eta)}\right)^2\nu_{-,t}(\eta), \\
&\D_{+,N}(t) := \frac12\sum_\eta \rho_+^{1-\eta_N}(1-\rho_+)^{\eta_N}
\left(\sqrt{f_{N,t}^+(\eta^N)} - \sqrt{f_{N,t}^+(\eta)}\right)^2\nu_{+,t}(\eta). 
\end{aligned}
\end{equation}
In this section we establish some useful bounds for these Dirichlet forms. 
We start from the Liggett case and prove the next result. 

\begin{prop}[Liggett boundary]
\label{prop:dir1}
For all $t<t'$, there exists $C$ such that 
\begin{align}
\label{eq:dir1}
\int_t^{t'} \big[\D_{-,N}(s)+\D_{\exc,N}(s)+\D_{+,N}(s)\big]ds \le C 
\end{align}
for all $N$. Moreover if $\rho_-(s)=\rho_+(s)$ for all $s\in[t,t']$, then 
\begin{align}
\label{eq:dir1b}
\int_t^{t'} \big[\D_{-,N}(s)+\D_{\exc,N}(s)+\D_{+,N}(s)\big]ds \le \frac C{N^a}. 
\end{align}
\end{prop}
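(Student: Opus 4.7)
The strategy is Yau's entropy method applied with a product Bernoulli reference measure. In the balanced sub-case take $\nu^*_t := \nu^N_{\bar\rho(t)}$ with $\bar\rho(t) := \bar\rho_-(t) = \bar\rho_+(t)$; in the general case one chooses a reference adapted to the boundary whose Dirichlet form one wishes to extract, applying the method twice (with $\nu^N_{\bar\rho_-(t)}$ and with $\nu^N_{\bar\rho_+(t)}$) to recover $\D_{-,N}$ and $\D_{+,N}$ respectively. Setting $f_t := d\mu_{N,t}/d\nu^*_t$ and $H_N(t) := H(\mu_{N,t}\,|\,\nu^*_t)$, the standard computation for a time-dependent reference gives the entropy identity
\begin{equation*}
  \frac{d}{dt} H_N(t) = N^{1+a}\int L_{N,t}(\log f_t)\,d\mu_{N,t} - \int \partial_t \log\nu^*_t\,d\mu_{N,t},
\end{equation*}
where the second term is bounded in absolute value by $CN$, since $\partial_t\log\nu^*_t(\eta) = \dot{\bar\rho}(t)\sum_{i=1}^N(\eta_i-\bar\rho(t))/[\bar\rho(t)(1-\bar\rho(t))]$ is a sum of $N$ single-site terms of bounded magnitude.

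For the first term, I apply to every elementary transition the convexity inequality $a\log(b/a)\le (b-a)-(\sqrt a-\sqrt b)^2$, then symmetrize the rates over the associated involution (bond swap $\eta\leftrightarrow\eta^{i,i+1}$ in the bulk; site flip $\eta\leftrightarrow\eta^1$ or $\eta\leftrightarrow\eta^N$ at the boundaries). Since $\nu^*_t$ is a product Bernoulli and thus invariant under exchanges, this yields
\begin{equation*}
  \int f_t L_{N,t}(\log f_t)\,d\nu^*_t \le -\big[\D_{\exc,N}(t)+\la_-(t)\D_{-,N}(t)+\la_+(t)\D_{+,N}(t)\big] + \int (L^*_{N,t}\1)\,f_t\,d\nu^*_t.
\end{equation*}
The matching of the symmetrized boundary rate with that used in the definition of $\D_{\pm,N}$ rests on the Liggett identity $\alpha(1-\bar\rho_-) + \gamma\bar\rho_-=\bar\rho_-(1-\bar\rho_-)$ for $\alpha=p\bar\rho_-$, $\gamma=(1-p)(1-\bar\rho_-)$, and its analogue at the right boundary, which guarantee that the symmetrization of the Liggett creation/annihilation rates produces exactly the reversible rate $\tilde\rho_\pm^{1-\eta_1}(1-\tilde\rho_\pm)^{\eta_1}$ appearing in $\D_{\pm,N}$.

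In the balanced case, the Liggett construction makes $\nu^N_{\bar\rho(t)}$ instantaneously invariant for $L_{N,t}$, so $L^*_{N,t}\1\equiv 0$: one verifies this by computing $L_{\exc}^*\1 = \bar p(\eta_N-\eta_1)$ and observing that the two boundary contributions $L^*_{\pm,t}\1$ cancel it exactly. The entropy inequality then reduces to $\frac{d}{dt}H_N + N^{1+a}[\D_{\exc,N}+\la_-\D_{-,N}+\la_+\D_{+,N}] \le CN$, and integrating over $[t,t']$ with $0\le H_N\le N\log 2$ and $\la_\pm\ge 1-p>0$ gives \eqref{eq:dir1b}. In the general case $L^*_{N,t}\1$ is not zero, but it is bounded uniformly in $\eta$ and $N$ by an $O(1)$ constant (being the sum of $\bar p(\eta_N-\eta_1)$ and two boundary defects of size $O(1)$), hence $N^{1+a}\int (L^*_{N,t}\1)f_t\,d\nu^*_t \le CN^{1+a}$, and the same integration yields $N^{1+a}\int_t^{t'}\D\,ds \le CN^{1+a}(t'-t)+CN$, i.e.\ \eqref{eq:dir1}. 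The delicate point, and the reason \eqref{eq:dir1b} improves on \eqref{eq:dir1} by a factor $N^{-a}$, is precisely the Liggett cancellation: the boundary rates are designed so that the bulk TASEP current at the two boundary sites is exactly absorbed by the boundary creation/annihilation flows.
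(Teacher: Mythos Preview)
Your argument is correct and arrives at the same inequality, but the paper organizes the computation differently. Instead of working directly with $L_{N,t}$ on $\Omega_N$ and computing $L^*_{N,t}\1$, the paper extends every probability on $\Omega_N$ to a product measure on $\{0,1\}^{\mathbb Z}$ (Bernoulli $\bar\rho_-$ to the left of $1$, $\bar\rho_+$ to the right of $N$) and then invokes Liggett's projection identity, which says that $L_{N,t}g - L^{\mathbb Z}_{\exc}g$ has zero mean under any such extension. This replaces the boundary operators entirely by the infinite-volume exclusion generator; the boundary Dirichlet forms $\D_{\pm,N}$ then appear not from symmetrizing the flip rates but from the exchange bonds $(0,1)$ and $(N,N+1)$ after integrating out the ghost variables $\eta_0,\eta_{N+1}$. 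The residual terms $\mathcal I_2,\mathcal I_3,\mathcal I_4$ in the paper play the role of your $\int(L^*_{N,t}\1)f_t\,d\nu^*_t$: they are $O(1)$ in general and vanish when $\bar\rho_-=\bar\rho_+$ because then $\bar\nu_{-,t}$ is translation-invariant. Your route is the more standard Yau computation; the paper's route exploits the specific Liggett structure to avoid touching the boundary operators at all.

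One small correction: after symmetrizing the Liggett boundary flip against $\nu_{\bar\rho_-}$, the coefficient that multiplies $\D_{-,N}$ is exactly $1$, not $\la_-$, because $\alpha(1-\bar\rho_-)+\gamma\bar\rho_-=\bar\rho_-(1-\bar\rho_-)$ already absorbs the rate. This is harmless for the conclusion, and in fact slightly better, since it removes the need for your lower bound $\la_\pm\ge 1-p$ (which would fail at $p=1$).
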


\begin{proof}
In this proof,
any function $f$ on $\Om_N$ is viewed as a local function on
$\{0,1\}^{\mathbb Z}$ in the standard way.

Given any probability measure $\mu$ on $\Om_N$, for a given $t\ge 0$
we extend it as a measure $\bar\mu$ on
$\{0,1\}^{\mathbb Z}\sim\Om_N \times \prod_{i\notin \{1,..,N\}}\{0,1\}$
where outside $\{1,..,N\}$ is a product measure with 
$\bar\mu\{\eta_i=1\} = \rho_-(t)$ for $i\le 0$ and 
$\bar\mu\{\eta_i=1\} = \rho_+(t)$ for $i>N$. 

Recall that $L_{N,t}$ is the generator with Liggett boundary rates in \eqref{eq:liggett}. 
Also define for all local function $f$ on $\{0,1\}^\mathbb Z$ 
\begin{align*}
L_\exc^\mathbb Zf := \sum_{i \in\mathbb Z} c_{i,i+1} \left[ f\left(\eta^{i,i+1}\right)-f(\eta) \right], \quad c_{i,j} = p\eta_i(1-\eta_j) + (1-p)\eta_j(1-\eta_i). 
\end{align*}
$L_\exc^\mathbb Z$ generates the asymmetric simple exclusion
process on $\{0,1\}^\mathbb Z$. 
As obtained in \cite[p. 244]{Liggett75} in the proof of Theorem 2.4,
for a function $g=g(\eta_1,\ldots,\eta_N)$, 
\begin{equation}
\begin{aligned}\label{eq:ligg}
&L_{N,t}g(\eta) - L_\exc^\mathbb Z g(\eta) \\
= &\,\big[p(\rho_--\eta_0)(1-\eta_1)+(1-p)(\eta_0-\rho_-)\eta_1\big] \big[g(\eta^1)-g(\eta)\big] \\
&+ \big[(1-p)(\rho_+-\eta_{N+1})(1-\eta_N)+p(\eta_{N+1}-\rho_+)\eta_N\big] \big[g(\eta^N)-g(\eta)\big]. 
\end{aligned}
\end{equation}
Observe that the integral of the right-hand side is $0$ with respect to a measure
$\bar\mu$ on $\{0,1\}^{\mathbb Z}$ obtained by extending any measure
$\mu$ on $\Om_N$ as above. 

Recall the Bernoulli measure $\nu_{\pm,t}$ defined in \eqref{eq:bbern}
and the corresponding density function $f_{N,t}^\pm$: $\mu_{N,t} = f_{N,t}^\pm \nu_{\pm,t}$. 
Consider the relative entropy $H_{\pm,N}(t)$ given by 
\begin{align*}
H_{\pm,N}(t) := \sum_{\eta\in\Om_N} f_{N,t}^\pm(\eta)\log f_{N,t}^\pm(\eta)\nu_{\pm,t}(\eta)
= \sum_{\eta\in\Om_N}\log f_{N,t}^\pm(\eta) \mu_{N,t} (\eta). 
\end{align*}
Using Kolmogorov equation and \eqref{eq:ligg}, we obtain for the entropy production that 
\begin{equation}
\label{eq:h1}
\begin{aligned}
\frac d{dt}H_{-,N}(t) &= N^{1+a}\sum_{\eta\in\Om_N} L_{N,t} [\log f_{N,t}^-]\mu_{N,t} 
- \sum_{\eta\in\Om_N} f_{N,t}^-\frac d{dt}\nu_{-,t} \\
&= N^{1+a}\sum_{\eta\in\{0,1\}^\mathbb Z} L_\exc^{\mathbb Z} [\log f_{N,t}^-]\bar\mu_{N,t}
- \sum_{\eta\in\Om_N} f_{N,t}^-\frac d{dt}\nu_{-,t}\\
&= N^{1+a} \sum_{\eta\in\{0,1\}^\mathbb Z} f_{N,t}^-L_\exc^\mathbb Z \big[\log f_{N,t}^-\big]
\bar\nu_{-,t}
- \sum_{\eta\in\Om_N} f_{N,t}^-\frac d{dt}\nu_{-,t}, 
\end{aligned}
\end{equation}
where $ \bar\nu_{-,t} = \prod_{i\le N} \nu_{\rho_{-}(t)}(\eta_i)
\prod_{i>N}\nu_{\rho_+(t)}(\eta_i)$.
Observe that the last term in \eqref{eq:h1} is bounded by $CN$.

Exploiting the inequality $x(\log y-\log x)\le2\sqrt x(\sqrt y-\sqrt x)$ for all $x$, $y>0$,
and denoting that $\bar p = 2p-1$, $g_{N,t}^-=(f_{N,t}^-)^{1/2}$, 
\begin{equation}
\label{eq:h2}
\begin{aligned}
&\sum_{\eta\in\{0,1\}^\mathbb Z} f_{N,t}^-L_\exc^\mathbb Z \big[\log f_{N,t}^-\big]
\bar\nu_{-,t} \le
2\sum_{\eta\in\{0,1\}^\mathbb Z} g_{N,t}^- L_\exc^\mathbb Z \big[g_{N,t}^-\big]
\bar\nu_{-,t} \\
=&\sum_{\eta\in\{0,1\}^\mathbb Z} \sum_{i\in\mathbb Z} \big[1+\bar p(\eta_i-\eta_{i+1})\big] \left[g_{N,t}^-(\eta^{i,i+1})
-g_{N,t}^-\right]g_{N,t}^-\bar\nu_{-,t} \\
=&\:\mathcal I_1+\mathcal I_2+\mathcal I_3+\mathcal I_4, 
\end{aligned}
\end{equation}
where the right-hand side reads 
\begin{align*}
\mathcal I_1 = -\frac12\sum_{\eta\in\{0,1\}^\mathbb Z}
\sum_{i\in\mathbb Z} \left[g_{N,t}^-(\eta^{i,i+1})
-g_{N,t}^-(\eta)\right]^2\bar\nu_{-,t}(\eta), 
\end{align*}
\begin{align*}
\mathcal I_2 &= -\frac12\sum_{\eta\in\{0,1\}^\mathbb Z} \sum_{i\in\mathbb Z} (g_{N,t}^-)^2(\eta)\big[\bar\nu_{-,t}(\eta)-\bar\nu_{-,t}(\eta^{i,i+1})\big] \\
&= \frac12\sum_{\eta\in\{0,1\}^\mathbb Z} f_{N,t}^-(\eta)\big[\bar\nu_{-,t}(\eta^{N,N+1})-\bar\nu_{-,t}(\eta)\big], 
\end{align*}
\begin{align*}
\mathcal I_3 &= \bar p\sum_{\eta\in\{0,1\}^\mathbb Z} \sum_{i\in\mathbb Z} (\eta_i-\eta_{i+1})g_{N,t}^-(\eta^{i,i+1})g_{N,t}^-\bar\nu_{-,t} \\
&= \frac{\bar p}2\sum_{\eta\in\{0,1\}^\mathbb Z} (\eta_{N+1}-\eta_N)g_{N,t}^-(\eta^{N,N+1})g_{N,t}^-\big[\bar\nu_{-,t}(\eta^{N,N+1})-\bar\nu_{-,t}\big], 
\end{align*}
\begin{align*}
\mathcal I_4 &= \bar p\sum_{\eta\in\{0,1\}^\mathbb Z} \sum_{i\in\mathbb Z} (\eta_i-\eta_{i+1})\left[-(g_{N,t}^-)^2(\eta)\right]\bar\nu_{-,t}(\eta) = \bar p\big[\rho_+(t)-\rho_-(t)\big]. 
\end{align*}
Notice that $\mathcal I_2$, $\mathcal I_3$, $\mathcal I_4$ are uniformly bounded in $N$ and they vanish when $\rho_-=\rho_+$. 
On the other hand, since $f_{N,t}^-$ depends only on $\{\eta_1,\ldots,\eta_N\}$, 
\begin{align*}
\mathcal I_1 = -\D_{\exc,N}(t) - \mathcal I_{1,l} - \mathcal I_{1,r}, 
\end{align*}
where $\mathcal I_{1,l}$ and $\mathcal I_{1,r}$ are computed respectively as 
\begin{align*}
\mathcal I_{1,l} &= \frac12\sum_{\eta\in\Om_N} 
\left[g_{N,t}^-(\eta^1) -g_{N,t}^-\right]^2\nu_{-,t}(\eta)
\sum_{\eta_0} \big(\eta_0(1 -\eta_1) + \eta_1(1-\eta_0)\big)\nu_{\rho_-(t)} (\eta_0) \\
&= \frac12\sum_{\eta\in\Om_N} \big(\rho_-(t) (1 -\eta_1) + \eta_1(1-\rho_-(t))\big)
\left[g_{N,t}^-(\eta^1) -g_{N,t}^-\right]^2\nu_{-,t} = \D_{-,N}(t), 
\end{align*}
\begin{align*}
\mathcal I_{1,r} = \frac12\sum_{\eta\in\Om_N} 
&\left[g_{N,t}^-(\eta^N)-g_{N,t}^-\right]^2\nu_{-,t}(\eta) \\
&\sum_{\eta_{N+1}} \big(\eta_N(1-\eta_{N+1}) + \eta_{N+1}(1-\eta_N)\big)
\nu_{\rho_+(t)}(\eta_{N+1}) \ge 0. 
\end{align*}
Hence, from \eqref{eq:h2} we obtain a constant $C>0$ such that 
\begin{align*}
\sum_{\eta\in\{0,1\}^\mathbb Z} f_{N,t}^-L_\exc^\mathbb Z \big[\log f_{N,t}^-\big]\nu_{-,t} \le
- \D_{-,N}(t) - \D_{\exc,N}(t) + C. 
\end{align*}
Plugging this into \eqref{eq:h1} and integrating in time, 
\begin{align*}
\int_0^t \big[\D_{-,N}(s) + \D_{\exc,N}(s)\big]ds \le C. 
\end{align*}
The proof of \eqref{eq:dir1} is completed by repeating the argument with $H_{N,t}^+$. 

Now suppose that $\rho_-(t)=\rho_+(t)$, then $\nu_{-,t}=\nu_{+,t}$ and we have $\mathcal I_{1,r} = \D_{+,N}(t)$. 
Therefore, \eqref{eq:h2} yields that 
\begin{align*}
\sum_{\eta\in\mathbb Z} f_{N,t}L_\exc^\mathbb Z \big[\log f_{N,t}\big]\nu_t \le -\D_{-,N}(t)-\D_{\exc,N}(t)-\D_{+,N}(t). 
\end{align*}
Since $H_N(0)=O(N)$, \eqref{eq:dir1b} follows similarly. 
\end{proof}

For reversible boundaries, the following estimate holds instead. 

\begin{prop}[Reversible boundary]
\label{prop:dir2}
For all $t<t'$, there exists $C$ such that 
\begin{align}
\label{eq:dir2}
\int_t^{t'} \big[\widetilde\sigma_N\la_-(s)\D_{-,N}(s) + \sigma_N\D_{\exc,N}(s) + \widetilde\sigma_N\la_+(s)\D_{+,N}(s)\big]ds \le C 
\end{align}
for all $N$. Moreover if $\rho_-(s)=\rho_+(s)$ for all $s\in[t,t']$, then 
\begin{equation}
\label{eq:dir2b}
\begin{aligned}
&\int_t^{t'} \big[\widetilde\sigma_N\la_-(s)\D_{-,N}(s) + \sigma_N\D_{\exc,N}(s) + \widetilde\sigma_N\la_+(s)\D_{+,N}(s)\big]ds \\
\le\:&\frac{\bar p}{\widetilde\sigma_N}\int_0^t \left[ \frac{\mathcal J(s)}{\la_-(s)} + \frac{\mathcal J(s)}{\la_+(s)} \right] ds + \frac C{N^a} \le C' \left( \frac1{\widetilde\sigma_N} + \frac1{N^a} \right). 
\end{aligned}
\end{equation}
\end{prop}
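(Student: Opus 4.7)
The strategy mirrors the proof of Proposition~\ref{prop:dir1}: I would compute the time derivative of the relative entropy $H_{\pm,N}(t):=\int\log f^{\pm}_{N,t}\,d\mu_{N,t}$ with $f^{\pm}_{N,t}=d\mu_{N,t}/d\nu_{\pm,t}$, bound the generator contribution via the inequality $x(\log y-\log x)\le 2\sqrt{x}(\sqrt{y}-\sqrt{x})$, and integrate over $[t,t']$. The key difference with the Liggett case is that the projection identity \eqref{eq:ligg} is no longer available; instead I would work directly on $\Omega_N$ and exploit: (i) the reversibility of $L_{\pm,t}$ with respect to $\nu_{\pm,t}$ (respectively); (ii) the reversibility of $L_{\mathrm{ssep}}$ with respect to any homogeneous Bernoulli product measure; and (iii) the invariance (though not reversibility) of every homogeneous Bernoulli product measure under $L_{\mathrm{tasep}}$.

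Decomposing $L_{N,t}=\widetilde\sigma_N\bar\la_-(t)L_{-,t}+\bar p L_{\mathrm{tasep}}+\frac{\sigma_N-\bar p}{2}L_{\mathrm{ssep}}+\widetilde\sigma_N\bar\la_+(t)L_{+,t}$, I would evaluate $2\int\sqrt{f^{-}_{N,t}}\,L_{N,t}[\sqrt{f^{-}_{N,t}}]\,d\nu_{-,t}$ piece by piece: the $L_{-,t}$ piece produces $-\widetilde\sigma_N\bar\la_-(t)\D_{-,N}(t)$; the $L_{\mathrm{ssep}}$ piece produces $-\frac{\sigma_N-\bar p}{2}\D_{\exc,N}(t)$; the $L_{\mathrm{tasep}}$ piece is nonpositive by the identity $2\sqrt{a}(\sqrt{b}-\sqrt{a})=(b-a)-(\sqrt{b}-\sqrt{a})^2$ combined with $\int L_{\mathrm{tasep}}[f^{-}_{N,t}]\,d\nu_{-,t}=0$; and the $L_{+,t}$ piece yields a Dirichlet-like form comparable to $-\widetilde\sigma_N\bar\la_+(t)\D_{+,N}(t)$ up to a bounded density-ratio factor between $\nu_{+,t}$ and $\nu_{-,t}$ at site~$N$, together with a \emph{cross-boundary correction} equal, after a short change of variables on site $N$, to $\widetilde\sigma_N\bar\la_+(t)(\rho_+-\rho_-)(\mathbb E_{\mu_t}[\eta_N]-\rho_-)/(\rho_-(1-\rho_-))$. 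Running the symmetric computation with $H_{+,N}$, adding the two entropies, and using $0\le H_{\pm,N}\le CN$ together with the $O(N)$ bound on $\int f^{\pm}_{N,t}\,\partial_t\nu_{\pm,t}\,d\eta$, one obtains \eqref{eq:dir2} after dividing by $N^{1+a}$.

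In the balanced case $\rho_-(s)=\rho_+(s)=:\rho(s)$ the two references coincide, $\nu_{-,t}=\nu_{+,t}=:\nu_t$, both $L_{\pm,t}$ become simultaneously reversible with respect to $\nu_t$, and the cross-boundary correction vanishes identically. The only remaining non-trivial contribution is the time-derivative residue $-\int f_{N,t}\,\partial_t\nu_t\,d\eta=\rho'(t)[\rho(1-\rho)]^{-1}\sum_{i=1}^N(\rho(t)-\mathbb E_{\mu_t}[\eta_i(t)])$. The boundary current identities \eqref{eq:current-bd} give $\mathbb E[\eta_1]-\rho=-\mathcal J(t)/(\widetilde\sigma_N\bar\la_-(t))+O(N^{-a})$ and $\mathbb E[\eta_N]-\rho=\mathcal J(t)/(\widetilde\sigma_N\bar\la_+(t))+O(N^{-a})$; combined with the microscopic conservation law, which forces the bulk deviations $\mathbb E[\eta_i]-\rho(t)$ for $2\le i\le N-1$ to contribute only $O(N^{1-a})$ in the quasi-static regime, this produces exactly the $\bar p\mathcal J/(\widetilde\sigma_N\bar\la_\pm)$ terms of \eqref{eq:dir2b} after integration in time.

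The main obstacle I anticipate is the proper treatment of the cross-boundary correction in the unbalanced case: a naive absolute-value bound produces a term of order $\widetilde\sigma_N\to\infty$, so one must either absorb it into $\sigma_N\D_{\exc,N}$ via a Cauchy--Schwarz step or exploit a refined density-ratio identity (for instance by introducing a site-dependent reference measure that interpolates between $\nu_{-,t}$ and $\nu_{+,t}$) in order that the bound \eqref{eq:dir2} hold with a constant $C$ independent of $\widetilde\sigma_N$. A related delicate point in the balanced case is the identification of the $\bar p\mathcal J/\widetilde\sigma_N$ residue, which requires combining \eqref{eq:current-bd} at both boundary sites with the microscopic conservation law in the bulk to control $\sum_{i}(\rho-\mathbb E_{\mu_t}[\eta_i])$ to the required precision.
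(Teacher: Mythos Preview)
Your overall architecture (relative entropy plus $x\log(y/x)\le 2\sqrt x(\sqrt y-\sqrt x)$) is the right one, but two concrete steps are incorrect and they propagate through the rest.

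\textbf{The TASEP piece is not nonpositive.} You claim $\int L_{\mathrm{tasep}}[f^{-}_{N,t}]\,d\nu_{-,t}=0$, but the bulk TASEP generator on a finite box does \emph{not} leave a homogeneous Bernoulli measure invariant: a two-line change of variables gives
\[
  \int L_{\mathrm{tasep}}[f^{-}_{N,t}]\,d\nu_{-,t}
  =\sum_\eta (\eta_N-\eta_1)f^{-}_{N,t}\,\nu_{-,t}
  =\mathbb E_{\mu_{N,t}}[\eta_N-\eta_1].
\]
This is exactly the $\bar p\,\mathbb E[\eta_N-\eta_1]$ term that the paper obtains (see the line after \eqref{eq:h3}). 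Combined with the boundary current identities \eqref{eq:current-bd}, in the balanced case this term equals $\bar p\big[\mathcal J(t)/(\widetilde\sigma_N\bar\la_-(t))+\mathcal J(t)/(\widetilde\sigma_N\bar\la_+(t))\big]+O(N^{-a})$, which is precisely the residue in \eqref{eq:dir2b}. Your attribution of this residue to the time-derivative term $-\int f_{N,t}\,\partial_t\nu_t$ is wrong: that term is $O(N)$ without the factor $N^{1+a}$, hence contributes only $O(N^{-a})$ after integration; no bulk summation of $\mathbb E[\eta_i]-\rho$ is needed.

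\textbf{The cross-boundary term.} Your anticipated obstacle is real for your decomposition, but the paper avoids it cleanly: instead of bounding $2\int\sqrt{f^-}L_{+,t}[\sqrt{f^-}]\,d\nu_-$ directly, write $\log f^{-}=\log f^{+}+\log(\nu_{+,t}/\nu_{-,t})$ and use the reversibility of $L_{+,t}$ with respect to $\nu_{+,t}$ to get $\int f^{+}L_{+,t}[\log f^{+}]\,d\nu_{+,t}\le -\D_{+,N}(t)$. The leftover is $\int f^{-}L_{+,t}[\log(\nu_{+,t}/\nu_{-,t})]\,d\nu_{-,t}$, which equals $-[F'(\rho_+)-F'(\rho_-)]\,\mathbb E[\eta_N-\rho_+]$ with $F'(\rho)=\log[\rho/(1-\rho)]$. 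After multiplying by $\widetilde\sigma_N\bar\la_+(t)$ this looks dangerous, but \eqref{eq:current-bd} gives $\widetilde\sigma_N\bar\la_+(t)\,\mathbb E[\eta_N-\rho_+]=\mathcal J(t)+O(N^{-a})$, so the product is $O(1)$ uniformly in $\widetilde\sigma_N$. No symmetrization with $H_{+,N}$, interpolation of reference measures, or Cauchy--Schwarz against $\D_{\exc,N}$ is needed.
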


\begin{proof}
Similarly to \eqref{eq:h1}, we obtain for the entropy production that 
\begin{equation}
\label{eq:h3}
\frac d{dt}H_{-,N}(t) \le N^{1+a}\sum_{\eta\in\Om_N} f_{N,t}^-L_{N,t} [\log f_{N,t}^-]\nu_{-,t} + CN. 
\end{equation}
Applying the argument used in \eqref{eq:h2}, 
\begin{align*}
&\sum_{\eta\in\Om_N} f_{N,t}^- \big(L_{N,t} - \widetilde\sigma_N\la_+(t)L_{+,t}\big) [\log f_{N,t}^-]\nu_{-,t} \\
\le\,&-\widetilde\sigma_N\la_-(t)\D_{-,N}(t) - \sigma_N\D_{\exc,N}(t) + \bar p\sum_{\eta\in\Om_N} (\eta_N-\eta_1)f_{N,t}^-\nu_{-,t}. 
\end{align*}
In view of \eqref{eq:current-bd}, the last term can be bounded as 
\begin{align*}
\bar p\sum_\eta (\eta_N-\eta_1)f_{N,t}^-\nu_{-,t} 
= \bar p \left[ \rho_+(t) - \rho_-(t) + \frac{\mathcal J(t)}{\widetilde\sigma_N\la_-(t)} + \frac{\mathcal J(t)}{\widetilde\sigma_N\la_+(t)} \right] + \frac C{N^a}. 
\end{align*}
For $L_{+,t}$, since $f_{N,t}^-\nu_{-,t}=f_{N,t}^+\nu_{+,t}$, 
\begin{align*}
\sum_\eta f_{N,t}^- L_{+,t} [\log f_{N,t}^-]\nu_{-,t} \le - \D_{+,N}(t) + \sum_\eta f_{N,t}^- L_{+,t} \left[ \log \left( \frac{\nu_{+,t}}{\nu_{-,t}} \right) \right] \nu_{-,t}. 
\end{align*}
Noting that $\rho_\pm \in (0,1)$, standard manipulation shows that 
\begin{align*}
L_{+,t} \left[ \log \left( \frac{\nu_{+,t}}{\nu_{-,t}} \right) \right] = -\left\{ \log \left[ \frac{\rho_+(t)}{1-\rho_+(t)} \right] - \log \left[ \frac{\rho_-(t)}{1-\rho_-(t)} \right] \right\} (\eta_N-\rho_+(t)). 
\end{align*}
Let $F(\rho) = \rho\log\rho + (1-\rho)\log(1-\rho)$, so that $F'(\rho) = \log(\rho/(1-\rho))$. 
By \eqref{eq:current-bd}, 
\begin{align*}
\sum_\eta f_{N,t}^- L_{+,t} \left[ \log \left( \frac{\nu_{+,t}}{\nu_{-,t}} \right) \right] \nu_{-,t} \le -\big[F'(\rho_+(t))-F'(\rho_-(t))\big]\frac{\mathcal J(t) + O(N^{-a})}{\widetilde\sigma_N\la_+(t)}. 
\end{align*}
Putting all these estimates together, we obtain from \eqref{eq:h3} that 
\begin{align*}
\frac d{dt}H_{-,N}(t) \le &-N^{1+a}\big[2\widetilde\sigma_N\la_-(t)\D_{-,N}(t) + \sigma_N\D_{\exc,N}(t) + 2\widetilde\sigma_N\la_+(t)\D_{+,N}(t)\big] \\
&+ N^{1+a}\big[C(\rho_\pm,t) + \widetilde\sigma_N^{-1}C(\la_\pm,t)\big] + CN, 
\end{align*}
where $C(\rho_\pm,t)$ and $C(\la_\pm,t)$ are constants given by 
\begin{align*}
C(\rho_\pm,t) &= \bar p\big[\rho_+(t)-\rho_-(t)\big] - \mathcal J(t)\big[F'(\rho_+(t)) - F'(\rho_-(t))\big], \\
C(\la_\pm,t) &= \bar p\mathcal J(t)\big[\la_+^{-1}(t)+\la_-^{-1}(t)\big]. 
\end{align*}
We can then conclude \eqref{eq:dir2} by integrating in time. 
For \eqref{eq:dir2b}, it suffices to observe that $C(\rho_\pm,t)=0$ when $\rho_-(t)=\rho_+(t)$. 
\end{proof}

\begin{rem}
The condition $\rho_-(s)=\rho_+(s)$ is necessary for $C(\rho_\pm,s) = 0$. 
Indeed, in view of Proposition \ref{prop:j}, if $\rho_+(s)<\rho_-(s)$, $\mathcal J(s) = \sup \{J(\rho); \rho_+(s)\le\rho\le \rho_-(s)\}$, so 
\begin{align*}
C(\rho_\pm,s) &= \int_{\rho_+(s)}^{\rho_-(s)} \left(\frac{\mathcal J(s)}{\rho(1-\rho)} - \bar p\right)d\rho = \int_{\rho_+(s)}^{\rho_-(s)} \frac{\mathcal J(s)-J(\rho)}{\rho(1-\rho)}d\rho > 0. 
\end{align*}
Meanwhile if $\rho_+(s) \ge \rho_-(s)$, $\mathcal J(s) = \inf \{J(\rho); \rho_-(s)\le\rho\le\rho_+(s)\}$, so 
\begin{align*}
C(\rho_\pm,s) &= \int_{\rho_-(s)}^{\rho_+(s)} \left(\bar p - \frac{\mathcal J(s)}{\rho(1-\rho)}\right)d\rho = \int_{\rho_-(s)}^{\rho_+(s)} \frac{J(\rho)-\mathcal J(s)}{\rho(1-\rho)}d\rho > 0. 
\end{align*}
Hence, better bounds are available only when $\rho_-=\rho_+$. 
\end{rem}

\section{Local equilibrium}
\label{sec:local-equi}

Recall that $\mu_{N,t}$ is the distribution of the dynamics at time $t$.

We prove in this section Lemma \ref{lem:local-equi},
as a direct consequence of the following \emph{one-block estimate}.

\begin{prop}\label{prop:one-block}
There exists some constant $C$, such that for any $\ell\ge1$, function $f$ supported on $\{\eta_1,\ldots,\eta_\ell\}$, $N \ge k \ge \ell$ and $t\ge0$,
\begin{align}
\label{eq:one-block}
  \int \sum_{i=k}^N \big|f_{i-\ell,k-\ell}-\langle f \rangle(\bar\eta_{i,k})\big|^2d\mu_{N,t} \le \frac{C\ell[k^3\D_{\exc,N}(t) + N]}{k-\ell+1}, 
\end{align}
where $f_{i,k} := (k+1)^{-1}\big[f(\tau_{i-k}\eta)+f(\tau_{i-k+1}\eta)+\ldots+f(\tau_i\eta)\big]$.
\end{prop}

\begin{rem}
\label{rem:one-block0}
From the proof below, it is easy to see that the uniform average $f_{i,k}$ in can be replaced with weighted average $f_{i,k}^*=\sum_{0 \le j \le k} a_{k,j}f(\tau_{i-j}\eta)$ for any weight $\{a_{k,j}\}$ such that $\sum_j a_{k,j}=1$ and $\sum_j a_{k,j}^2 = O(k^{-1})$.
\end{rem}




\begin{proof}
For $\rho_* \in I_k := \{i/k;i=0,1,\ldots,k\}$, define
\begin{align}
\Om_{k,\rho_*} := \left\{\eta=(\eta_1,\dots,\eta_k) \in \Om_k~\bigg|~\frac1k\sum_{i=1}^k \eta_i = \rho_*\right\}.
\end{align}
Let $\nu^k(\,\cdot\,|\rho_*)$ be the uniform measure on $\Om_{k,\rho_*}$.
For $i=k$, $k+1$, ..., $N$, let
\begin{equation}
\begin{aligned}
&\bar\mu_{N,t}^{i,k}(\rho_*) := \mu_{N,t}\left\{(\eta_{i-k+1},\ldots,\eta_i)\in\Om_{k,\rho_*}\right\}, \\
&\mu_{N,t}^{i,k}(\eta_{i-k+1},\ldots,\eta_i|\rho_*) := \frac{\mu_{N,t}(\eta_{i-k+1},\ldots,\eta_i)}{\bar\mu_{N,t}^{i,k}(\rho_*)}. 
\end{aligned}
\end{equation}
Write $\mathcal F_{i,k}:=f_{i-\ell,k-\ell}-\langle f \rangle(\bar\eta_{i,k})$. 
By the relative entropy inequality, 
\begin{equation}\label{eq:rel-ent-ineq}
\begin{aligned}
\int \big|\mathcal F_{i,k}\big|^2d\mu_{N,t} =&\sum_{\rho_* \in I_k} \bar\mu_{N,t}^{i,k}(\rho_*)\int \big|\mathcal F_{i,k}\big|^2d\mu_{N,t}^{i,k}(\,\cdot\,|\rho_*) \\
\le\:&\frac1a\sum_{\rho_* \in I_k} \bar\mu_{N,t}^{i,k}(\rho_*)\bigg\{H\left(\mu_{N,t}^{i,k}(\,\cdot\,|\rho_*); \nu^k(\,\cdot\,|\rho_*)\right) \\
&+\:\log\int \exp\big\{a|\mathcal F_{i,k}|^2\big\}d\nu^k(\,\cdot\,|\rho_*)\bigg\}, \quad \forall\,a>0, 
\end{aligned}
\end{equation}
where $H$ is the relative entropy: for two measures $\mu$, $\nu$ on $\Om_{k,\rho_*}$, 
\begin{align}
H(\mu;\nu) := \int (\log\mu - \log\nu)d\mu. 
\end{align}
The logarithmic Sobolev inequality \eqref{eq:logsob} yields that there is a universal constant $C_\LS$, such that for each $i$, $k$ and $\rho_*$, 
\begin{align}\label{eq:logsob1}
H\left(\mu_{N,t}^{i,k}(\,\cdot\,|\rho_*); \nu^k(\,\cdot\,|\rho_*)\right) \le 2^{-1}C_\LS k^2\D_{N,\rho_*}^{i,k}(t), 
\end{align}
where the Dirichlet form in the right-hand side is defined as 
\begin{align}
\D_{N,\rho_*}^{i,k}(t) := \frac12\sum_{\eta\in\Om_{k,\rho_*}}
\sum_{i'=1}^{k-1} \left(\sqrt{\mu_{N,t}^{i,k}(\eta^{i',i'+1}|\rho_*)}
- \sqrt{\mu_{N,t}^{i,k}(\eta|\rho_*)}\right)^2. 
\end{align}
Plugging \eqref{eq:logsob1} into \eqref{eq:rel-ent-ineq}
and using Schwarz inequality, we obtain
\begin{equation}
\begin{aligned}
\sum_{i=k}^N \int \big|\mathcal F_{i,k}\big|^2&d\mu_{N,t} \le \frac{C_\LS k^3}{2a}\D_{\exc,N}(t) \\
&+\frac1a\sum_{i=k}^N \sum_{\rho_* \in I_k} \bar\mu_{N,t}^{i,k}(\rho_*) \log\int e^{a\mathcal F_{i,k}^2}d\nu^k(\,\cdot\,|\rho_*). 
\end{aligned}
\end{equation}
The desired estimate then follows if we can find constants $c$, $C$ such that 
\begin{align}\label{eq:exp}
\int \exp\big\{a|\mathcal F_{i,k}|^2\big\}d\nu^k(\,\cdot\,|\rho_*) \le C, \quad \forall\,a < \frac{ck}\ell.
\end{align}

We are left with the proof of \eqref{eq:exp}. 
Without loss of generality, we assume that the local function $f \in [0,1]$. 
By Hoeffding's lemma, for all $\rho \in [0,1]$, 
\begin{align}
\log\int e^{a[f-\langle f \rangle(\rho)]}d\nu_\rho \le \frac{a^2}8, \quad \forall\,a \in \R. 
\end{align}
By splitting the family $\{f(\tau_{i'}\eta),i'=i-k,\ldots,i-\ell\}$ into independent groups and applying the generalized H\"older's inequality, 
\begin{align}
\log\int \exp\left\{\frac a{k-\ell+1}\sum_{i'=i-k}^{i-\ell} \big[f(\tau_{i'}\eta) - \langle f \rangle(\rho)\big]\right\}d\nu_\rho \le \frac{\ell a^2}{8(k-\ell+1)}. 
\end{align}
Standard manipulation then shows that if $a \le \ell^{-1}(k-\ell+1)$, 
\begin{align}
\log\int \exp\left\{a\bigg|\frac1{k-\ell+1}\sum_{i'=i-k}^{i-\ell} \big[f(\tau_{i'}\eta) - \langle f \rangle(\rho)\big]\bigg|^2\right\}d\nu_\rho \le 3. 
\end{align}
In order to obtain \eqref{eq:exp} it suffices to replace $\nu_\rho$ with its conditional measure $\nu^k(\,\cdot\,|\rho_*)$. 
It follows from an elementary estimate that 
\begin{align}
\nu^k\big(\eta|_\Ga = \tilde\eta\,\big|\,\rho_*\big) \le C\nu_{\rho_*}(\eta|_\Ga = \tilde\eta) 
\end{align}
for all $\Ga \subset \{i-k+1,\ldots,i\}$ such that $|\Ga| \le 2k/3$. 
\end{proof}

With Proposition \ref{prop:one-block} and the estimates on the Dirichlet forms proved in Section \ref{sec:dir}, one easily concludes Lemma \ref{lem:local-equi}.

\begin{proof}[Proof of Lemma \ref{lem:local-equi}]
Let $g=\langle f \rangle$.
As $K\gg\ell$, Proposition \ref{prop:one-block} yields that
\begin{align}
\E_N \left[ \int_0^T \frac1N\sum_{i=K}^N \big|f_{i-\ell,K-\ell}-g\big(\bar\eta_{i,K}\big)\big|^2dt \right] \le C' \int_0^T \left[ \frac{K^2}N\D_{\exc,N}(t) + \frac1K \right] dt.
\end{align}
In view of \eqref{eq:dir1} and \eqref{eq:dir2}, for $K$ such that $1 \le K \le \sqrt N$, the expression above vanishes as $N\to\infty$.
Using Schwarz inequality,
\begin{align}
\label{eq:7-1}
\lim_{N\to\infty} \E_N \left[ \int_0^T \frac1N\sum_{i=K}^N \vf \left( \frac iN,t \right) \big[f_{i-\ell,K-\ell}-g\big(\bar\eta_{i,K}\big)\big]dt \right] = 0.
\end{align}
Meanwhile, since $K \ll N$ and $\vf$ is continuous, we can replace $f(\tau_i\eta)$ by its block average with a uniformly vanishing error for each $t\in[0,T]$:
\begin{align}
\label{eq:7-2}
\lim_{N\to\infty} \frac1N\sum_{i=K}^N \vf \left( \frac iN,t \right) \big[ f(\tau_{i-\ell}\eta) - f_{i-\ell,K-\ell} \big] = 0.
\end{align}
Lemma \ref{lem:local-equi} follows from \eqref{eq:7-1} and \eqref{eq:7-2}.
\end{proof}

\section{Balanced dynamics}
\label{sec:balance}

In this section, we focus on the \emph{balanced dynamics}: $\rho_-(t)=\rho_+(t)=\rho(t)$ for $t\in[0,T]$.
Our aim is to prove Proposition \ref{prop:hl-balan} for both Liggett case and reversible case.

Recall the uniform block averages $\bar\eta_{i,k}$ defined by \eqref{eq:block} and the mesoscopic scale $K=o(N)$.
Define the \emph{smoother weighted averages} 
\begin{align}
\label{eq:smooth-block}
\etabar_{i,K} := \frac1K\sum_{j=0}^{K-1} \bar\eta_{i+j,K} = \frac1{K^2}\sum_{|j|<K} \big(K-|j|\big)\eta_{i-j}, \quad i=K,\ldots,N-K+1.
\end{align}
The empirical process $\hat\rho_N=\hat\rho_{N,K(N)}$ associated with $\etabar_{i,K}$ reads 
\begin{align}\label{eq:empirical1}
\hat\rho_N(x,t) := \sum_{i=K+1}^{N-K} \chi_{i,N}(x) \etabar_{i,K}(t), \quad (x,t) \in [0,1]\times\R_+, 
\end{align}
with $\chi_{i,N}$ in \eqref{eq:indicator}.
Observe that $(\eta_1,\eta_N)$ does not appear in \eqref{eq:empirical1}, so the boundary generators do not contribute to the time evolution of $\rho_N$.

We prove Proposition \ref{prop:hl-balan} via two lemmas.
First, we observe that $\rho$ and $\hat\rho$ are essentially equivalent across different mesoscopic scales.

\begin{lem}
\label{lem:local-equi-balan}
If the dynamics is balanced and $K$, $K'$ satisfy that
\begin{align}
&\text{for Liggett case} \quad 1 \ll K,K' \ll \min\big\{\sqrt{N^{1+a}},N\big\};\\
&\text{for reversible case} \quad 1 \ll K,K' \ll \min\big\{\sqrt{N(N^a+\widetilde\sigma_N)\sigma_N},N\big\},
\end{align}
then for any $\vf\in\mathcal C([0,1]\times\R_+)$ and $g\in\mathcal C([0,1])$,
\begin{align}
\label{eq:local-equi-balan}
\lim_{N\to\infty} \E_N^{\mu_{N,0}} \left[ \iint_{\Sigma_T} \vf(x,t)\big[g\big(\rho_{N,K}(x,t)\big) - g\big(\hat\rho_{N,K'}(x,t)\big)\big]dx\,dt \right] = 0.
\end{align}
\end{lem}

Then we show that $\hat\rho_{N,K}$ converges to $\rho(t)$ with some specific $K=K(N)$.

\begin{lem}
\label{lem:hl-balan0}
If the dynamics is balanced and $K$ satisfies that
\begin{align}
\label{eq:assp-k-li}
&\text{for Liggett case} \quad \sqrt N \ll K \ll \min\{N^a,N\};\\
\label{eq:assp-k-ge}
&\text{for reversible case} \quad \max\big\{\sqrt N, \sigma_N\big\} \ll K \ll \min\big\{ N^a\sigma_N, \widetilde\sigma_N\sigma_N, N\big\},
\end{align}
then for any $\vf\in\mathcal C([0,1]\times\R_+)$ and $g\in\mathcal C([0,1])$,
\begin{align}
\label{eq:limit-balan0}
\lim_{N\to\infty} \iint_{\Sigma_T} \vf(x,t)g\big(\hat\rho_{N,K}(x,t)\big)dx\,dt = \iint_{\Sigma_T} \varphi(x,t)g\big(\rho(t)\big)dt
\end{align}
holds in probability for any sequence of initial distributions $\mu_{N,0}$.
\end{lem}

The assumption $a>1/2$ for Liggett case and \eqref{eq:assp-ge}, \eqref{eq:assp-a-ge} for reversible case
assure the existence of a common sequence $K=K(N)$ satisfying the conditions in Lemma \ref{lem:local-equi-balan} and \ref{lem:hl-balan0}.
Proposition \ref{prop:hl-balan} then follows straightforwardly.

\subsection{Proof of Lemma \ref{lem:local-equi-balan}}

For the balanced dynamics we can prove a stronger version of
Lemma \ref{lem:local-equi}.

\begin{proof}[Proof of Lemma \ref{lem:local-equi-balan}]
Observe that with $\bar\vf_i(t):=N\int_0^1 \chi_{i,N}(x)\vf(x,t)dx$,
\begin{align}
\label{eq:e1}
\iint_{\Sigma_T} \vf(x,t) g(\rho_N(x,t))\; dx\,dt = \frac{(2K-1)Tg(0)}{2N} + \int_0^T \frac1N\sum_{i=K}^N \bar\vf_i(t)g\big(\bar\eta_{i,K}\big)dt.
\end{align}
As $g\in\mathcal C([0,1])$ can be uniformly approximated by polynomials,
it is enough to consider $g(\rho)=\rho^\ell$ for all fixed $\ell$.
For a choice of 
$f=\eta_1\eta_2\ldots\eta_\ell$ we have $\left< f\right>(\rho) = g(\rho)$.

Fix some $K$ satisfying the conditions of Lemma \ref{lem:local-equi-balan}.
Combining Proposition \ref{prop:one-block} with \eqref{eq:dir1b} for Liggett case,
or \eqref{eq:dir2b} for reversible case, we obtain
\begin{align}
\lim_{N\to\infty} \E_N \left[ \int_0^T \frac1N\sum_{i=K}^N \big|g\big(\bar\eta_{i,K}\big)-f_{i-\ell,K-\ell}\big|^2dt \right] = 0.
\end{align}
Schwarz inequality then yields that
\begin{align}
\label{eq:e2}
\lim_{N\to\infty} \E_N \left[ \int_0^T \frac1N\sum_{i=K}^N \bar\vf_i(t)\big[g\big(\bar\eta_{i,K}\big) - f_{i-\ell,K-\ell}\big]dt \right] = 0.
\end{align}
Similarly to \eqref{eq:7-2}, $f_{i-\ell,K-\ell}$ can be replaced by $f(\tau_{i-\ell}\eta)$ with a uniformly vanishing error.
Therefore, from \eqref{eq:e1} we see that $\rho_{N,K}$ are identical for different $K$:
\begin{align}
\label{eq:e4}
\lim_{N\to\infty} \E_N \left[ \iint_{\Sigma_T} \vf g(\rho_{N,K})dx\,dt - \int_0^T \frac1N\sum_{i=\ell}^N \bar\vf_if(\tau_{i-\ell}\eta)dt \right] = 0.
\end{align}
To extend the argument to $\hat\rho_{N,K'}$, observe that $|g'(x)|=\ell x^{\ell-1}\le\ell$,
\begin{equation}
\begin{aligned}
&\E_N \left[ \int_0^T \frac1N\sum_{i=K'}^{N-K'+1} \big|g\big(\etabar_{i,K'}\big) - g\big(\bar\eta_{i+K'-1,2K'-1}\big)\big|^2dt \right] \\
\le\;&\E_N \left[ \int_0^T \frac\ell N\sum_{i=K'}^{N-K'+1} \big| \etabar_{i,K'} - \bar\eta_{i+K'-1,2K'-1}\big|^2dt \right].
\end{aligned}
\end{equation}
Using Proposition \ref{prop:one-block} and Remark \ref{rem:one-block0} with $f=\eta_1$, $k=2K'-1$ and $a_{k,j}$ be the weights in $\etabar_{i,K'}$, the right-hand side above vanishes as $N\to\infty$.
Therefore,
\begin{align}
\label{eq:e5}
\lim_{N\to\infty} \E_N \left[ \iint_{\Sigma_T} \vf g(\hat\rho_{N,K'})dx\,dt - \int_0^T \frac1N\sum_{i=\ell}^N \bar\vf_if(\tau_{i-\ell}\eta)dt \right] = 0.
\end{align}
Lemma \ref{lem:local-equi-balan} then follows from \eqref{eq:e4} and \eqref{eq:e5}.
\end{proof}

\subsection{The boundary entropy production}
\label{sec:bentropy}

We are left with the proof of Lemma \ref{lem:hl-balan0}.
Recall the boundary entropy--entropy flux pair $(\h,\q)$ in Definition \ref{bLef}.
For $\psi \in \mathcal C^1(\Sigma_T)$ and $w \in \mathcal C^1(\R_+)$,
define the \emph{boundary entropy production}
\begin{equation}\label{eq:ent-prod}
\begin{aligned}
X^\h_N(\psi,w) =\;&N^{-a}\iint_{\Sigma_T} \big[\h(\hat\rho_N,w)\partial_t \psi+ \partial_w\h(\hat\rho_N,w)w'\psi\big]dx\,dt \\
& + \iint_{\Sigma_T} \q(\hat\rho_N,w)\partial_x\psi\,dx\,dt, 
\end{aligned}
\end{equation}
where $\hat\rho_N=\hat\rho_{N,K}$ is the empirical process defined in \eqref{eq:empirical1}. 

From now on we fix an arbitrary $\rho\in\mathcal C^1([0,T])$ such that $\rho(t)\in[0,1]$. 
We see in below that balanced dynamics has zero boundary entropy production.

\begin{prop}[Liggett boundary]
\label{prop:bd-ent-li}
If $\rho_-(t)=\rho_+(t)=\rho(t)$ and \eqref{eq:assp-k-li} holds,
then
\begin{align}
\label{eq:bd-ent}
\lim_{N\to\infty} \E_N^{\mu_{N,0}} \left[ \left| X_N^F(\psi,\rho) \right| \right] = 0, 
\end{align}
for any initial distribution $\mu_{N,0}$, boundary entropy--entropy flux pair $(\h,\q)$ and $\psi \in \mathcal C^2(\Sigma_T)$ such that $\psi(\cdot,0) = \psi(\cdot,T) = 0$. 
\end{prop}

\begin{prop}[Reversible boundary]
\label{prop:bd-ent-ge}
If $\rho_-(t)=\rho_+(t)=\rho(t)$ and \eqref{eq:assp-k-ge} holds,
then the result in Proposition \ref{prop:bd-ent-li} still holds. 
\end{prop}

The proofs of Proposition \ref{prop:bd-ent-li} and \ref{prop:bd-ent-ge}
are similar and are postponed to Sections \ref{subsec:proof-li} and \ref{subsec:proof-ge}. 
From \eqref{eq:bd-ent} we can conclude the proof of Lemma \ref{lem:hl-balan0}
as follows.

\begin{proof}[Proof of Lemma \ref{lem:hl-balan0}]
Given any boundary entropy--entropy flux pair $(\h,\q)$, define the 
boundary entropy flux of a Young measure $\nu\in\mathcal Y$ 
with respect to boundary data $w \in \mathcal C([0,T])$ as the functional 
\begin{equation}
\widetilde\q(\psi;\nu,w) :=
\iint_{\Sigma_T} \psi(x,t)dx\,dt\int_0^1 \q(y,w(t))\nu_{x,t}(dy), 
\quad \forall\,\psi\in \mathcal C(\Sigma_T). 
\end{equation}
Let $\hat\nu^N$ be the Young measure associated to $\hat\rho_N$, i.e., $\hat\nu_{x,t}^N = \delta_{\hat\rho_N(x,t)}$. 
Since $\h$ and $\partial_w\h$ are bounded and $a>0$, for all $\psi\in\mathcal C^1(\Sigma_T)$, 
\begin{align}
\lim_{N\to\infty} \E_N^{\mu_{N,0}} \left[ \left| \widetilde\q(\partial_x\psi;\hat\nu_N,\rho) - X_N^\h(\psi,\rho) \right| \right] = 0, 
\end{align}
where $\rho(t)= \rho_\pm(t)$.
Since the map $\nu \mapsto \widetilde\q(\partial_x\psi;\nu,\rho)$
is a bounded linear functional on $\mathcal Y$, 
it is continuous and consequently
the set $\{\nu; |\widetilde\q(\partial_x\psi;\nu,\rho)| < \ve\}$ is open. 
Thanks to Lemma \ref{lem:local-equi-balan}, the distribution of $\hat\nu^N$ converges to $\mathfrak Q$.
By \eqref{eq:bd-ent} we have 
\begin{align}
\mathfrak Q \left( \left|\widetilde\q(\partial_x\psi;\nu,\rho)\right| > \ve \right) \le \liminf_{N\to\infty} \mathbb P_N \left( \left| X_N^F(\psi,\rho) \right| > \ve \right) = 0 
\end{align}
for any $\ve>0$ and $\psi\in \mathcal C^2(\Sigma_T)$ such that $\psi(\cdot,0) = \psi(\cdot,T) = 0$. 
Hence, the following holds with $\mathfrak Q$-probability $1$: 
\begin{align}\label{eq:bd-ent0}
\bar\q(x,t) := \int_0^1 \q\big(y,\rho(t)\big)\nu_{x,t}(dy) = 0, \quad (x,t)\:\text{-a.e. in }\Sigma_T. 
\end{align}

To prove Lemma \ref{lem:hl-balan0}, it suffices to show that $\nu_{x,t} = \delta_{\rho(t)}$ if \eqref{eq:bd-ent0} holds for all boundary entropy flux $\q$. 
We make use of the boundary entropy 
\begin{align}
\h(u,w) = 
\begin{cases} 
w\land\frac12 - u, &u \in [0, w\land\frac12), \\ 
0, &u \in [w\land\frac12, 1], 
\end{cases}
\end{align}
The corresponding boundary entropy flux is 
\begin{align}
\q(u,w) = 
\begin{cases} 
J(w\land\frac12) - J(u), &u \in [0, w\land\frac12), \\ 
0, &u \in [w\land\frac12, 1]. 
\end{cases}
\end{align}
As $\q(u,w)\ge0$ for all $(u,w)$ but $\bar\q(x,t)=0$, 
we conclude that $\nu_{x,t}$ concentrates on the zero set of $\q$,
which is $[\rho(t)\land1/2, 1]$. 
Similarly, choose 
\begin{align*}
&\h(u,w) = 
\begin{cases}
0, &u \in [0, w\lor\frac12], \\
u - w\lor\frac12, &u \in (w\lor\frac12,1], 
\end{cases}
\\
&\q(u,w) = 
\begin{cases} 
0, &u \in [0, w\lor\frac12], \\
J(u) - J(w\lor\frac12), &u \in (w\lor\frac12,1]. 
\end{cases}
\end{align*}
As $\q(u,w)\le0$. the condition $\bar\q(x,t)=0$ then implies that 
$\nu_{x,t}$ concentrates on $[0, \rho_(t)\lor1/2]$. 
Hence, $\nu_{x,t}(\La_t) = 1$ almost surely on $\Sigma_T$, where 
\begin{align}
\La_t = \left[\rho(t)\land\frac12, \rho(t)\lor\frac12\right]. 
\end{align}
Finally, to close the proof we choose 
\begin{align}
\h(u,w) = |u-w|, \quad \q(u,w) = \text{sign}(u-w)(J(u)-J(w)). 
\end{align}
If $\rho(t)<1/2$, $\q(u,\rho(t)) \ge 0$ on $\La_t = [\rho(t),1/2]$ and the only zero point is $\rho(t)$, so that $\bar\q=0$ implies $\nu_{(x,t)} = \delta_{\rho(t)}$. 
If $\rho(t)\ge1/2$ the argument is similar. 
\end{proof}

Now we prepare some notations for the proofs of Proposition \ref{prop:bd-ent-li} and \ref{prop:bd-ent-ge}.
Let
\begin{align}\label{eq:bd-set}
B_N := \left[0,\frac {2K+1}{2N}\right) \cup \left[1-\frac{2K-1}{2N},1 \right], \quad N\ge1.
\end{align}
For $\psi: \Sigma_T \to \R$ and each $i=1$, ..., $N$, 
\begin{align}
\bar\psi_i(t) := N\int_0^1 \psi(x,t) \chi_{i,N}(x)dx, \quad \tilde\psi_i(t) := \psi\left(\frac iN - \frac1{2N}, t\right). 
\end{align}
Recall the non-gradient current $J_{i,i+1}=\bar p\eta_i(1-\eta_{i+1})$ and let 
\begin{align}
\Jbar_{i,K} := \frac1{K^2}\sum_{|j|<K} \big(K-|j|\big)J_{i-j,i-j+1}.
\end{align}
We shall fix some boundary Lax entropy--entropy flux pair $(\h,\q)$ and write $X_N$ instead of $X_N^\h$ for short. 
We also omit the arbitrary initial measure $\mu_{N,0}$ and denote the expectation with respect to $\{\eta(t); t\in[0,T]\}$ by $\E_N$.

\subsection{Proof of Proposition \ref{prop:bd-ent-li}}
\label{subsec:proof-li}

We begin with a decomposition lemma.
Recall that for a sequence $\{a_i,i\in\mathbb Z\}$, $\nabla a_i=a_{i+1}-a_i$ and $\nabla^*a_i=a_{i-1}-a_i$.

\begin{lem}
\label{lem:ent-prod1}
$X_N$ satisfies the following decomposition: 
\begin{align}\label{eq:ent-prod1}
X_N(\psi,w) = M_N(\psi,w) - \sum_{i=1}^4 A_N^{(i)}(\psi,w), 
\end{align}
where $M_N$ is a square integrable martingale and $A_N^{(i)}$ are given by 
\begin{equation}\label{eq:a-li}
\begin{aligned}
A_N^{(1)} &:= \int_0^T \sum_{i=K+1}^{N-K} \bar\psi_i \partial_u\h(\etabar_{i,K},w) \nabla^* \left[ \Jbar_{i,K} - J(\etabar_{i,K}) \right]dt, \\
A_N^{(2)} &:= \frac{1-\bar p}2\int_0^T \sum_{i=K+1}^{N-K} \bar\psi_i \partial_u\h(\etabar_{i,K},w) \Delta\etabar_{i,K}\,dt, \\
A_N^{(3)} &:= \frac1N\int_0^T \sum_{i=K+1}^{N-K} \bar\psi_i \left( \ve_{i,K}^{(1)} + \ve_{i,K}^{(2)} \right)dt - \int_0^T \int_{B_N} \q(0,w)\partial_x\psi\,dx\,dt, \\
A_N^{(4)} &:= \int_0^T \sum_{i=K+1}^{N-K} \left[ \bar\psi_i \partial_u\q(\etabar_{i,K},w) \nabla^*\etabar_{i,K} - \nabla\tilde\psi_i \q(\etabar_{i,K},w) \right] dt. 
\end{aligned}
\end{equation}
Here $\ve_{i,K}^{(1)}$ and $\ve_{i,K}^{(2)}$ are respectively given by 
\begin{equation}\label{eq:eps-li}
\begin{aligned}
\ve^{(1)}_{i,K} &:= \frac N2 \sum_{j=i-K}^{i+K-1} \left( \bar p\eta_i + \frac{1-\bar p}2 \right) \partial_u^2\h(\tilde\eta_{i,j,K},w) \left( \etabar_{i,K}^{j,j+1}-\etabar_{i,K} \right)^2,
\\
\ve^{(2)}_{i,K} &:= N\big[\nabla^*J(\etabar_{i,K}) - J'(\etabar_{i,K})\nabla^* \etabar_{i,K}\big]. 
\end{aligned}
\end{equation}
where $\tilde\eta_{i,j,K}$ is some intermediate value between $\etabar_{i,K}$ and $\etabar_{i,K}^{j,j+1}$. 
\end{lem}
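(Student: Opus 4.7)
The plan is to apply Dynkin's formula to $t \mapsto \bar\psi_i(t) F(\hat\eta_{i,K}(t), w(t))$ for each $i \in [K+1, N-K]$ and then regroup the resulting terms. Since $\hat\eta_{i,K}$ for $i$ in this range depends only on $\eta_j$ with $j \in [i-K+1, i+K-1] \subseteq [2, N-1]$, the boundary generators $L_{\pm,t}$ act trivially on $F(\hat\eta_{i,K}, w)$ and only $L_\exc$ contributes to the drift. Exploiting $\psi(\cdot, 0) = \psi(\cdot, T) = 0$ to kill the time-endpoint contributions, summing over $i$, and splitting $\int_0^1 (\cdot)\,dx$ into the disjoint pieces on $\{\chi_{i,N}(x) = 1\}$ plus the remainder on $B_N$, I arrive at
\begin{equation*}
X_N = M_N - \sum_{i=K+1}^{N-K} \int_0^T \bar\psi_i L_\exc F(\hat\eta_{i,K}, w)\,dt + \iint_{\Sigma_T} Q(\rho_N, w)\partial_x\psi\,dx\,dt + N^{-a}[B_N]_1,
\end{equation*}
where $M_N$ is the sum, normalized by $N^{1+a}$, of the Dynkin martingales (square integrable by the boundedness of $F$ and $\hat\eta_{i,K}$), and $[B_N]_1$ collects the time-derivative contributions integrated over $B_N$.

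Next I Taylor-expand each elementary jump contribution $F(\hat\eta_{i,K}^{j,j+1}, w) - F(\hat\eta_{i,K}, w)$ to second order. The first-order part reproduces $\partial_u F(\hat\eta_{i,K}, w) L_\exc[\hat\eta_{i,K}]$, and the quadratic remainder, after using the support identity $\eta_j(1-\eta_{j+1})(\hat\eta^{j,j+1}-\hat\eta)^2 = \eta_j (\hat\eta^{j,j+1}-\hat\eta)^2$ (both sides vanish when $\eta_j = \eta_{j+1}$) together with the splitting $L_\exc = \bar p\, L_{\text{tasep}} + \frac{1-\bar p}{2} L_{\text{ssep}}$, identifies exactly with $\epsilon^{(1)}_{i,K}/N$ and produces the $\epsilon^{(1)}$-piece of $A^{(3)}$.

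For the first-order piece I would use the Abel-summation identities
\begin{equation*}
\bar p\, L_{\text{tasep}}[\hat\eta_{i,K}] = \nabla^* \hat J_{i,K}, \qquad L_{\text{ssep}}[\hat\eta_{i,K}] = \Delta \hat\eta_{i,K},
\end{equation*}
which hold exactly because the Fej\'er tent weights $w_l = (K-|l|)/K^2$ satisfy $w_{\pm K} = 0$, so the boundary-in-$l$ contributions vanish when $\nabla^*$ is commuted through the $w_l$-sum. This writes $\partial_u F\cdot L_\exc[\hat\eta_{i,K}]$ in divergence form. Adding and subtracting $\partial_u F\cdot \nabla^* J(\hat\eta_{i,K})$ isolates the $A^{(1)}$-factor $\nabla^*[\hat J_{i,K} - J(\hat\eta_{i,K})]$; invoking the boundary entropy--entropy flux relation $J'(u)\partial_u F(u, w) = \partial_u Q(u, w)$ together with the Taylor identity $\nabla^* J(\hat\eta_{i,K}) = J'(\hat\eta_{i,K}) \nabla^* \hat\eta_{i,K} + \epsilon^{(2)}_{i,K}/N$ rewrites the compensating term as $\partial_u Q(\hat\eta_{i,K}, w) \nabla^* \hat\eta_{i,K}$ (the first summand of $A^{(4)}$) plus the $\epsilon^{(2)}_{i,K}/N$ contribution to $A^{(3)}$.

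Finally, the $Q(\rho_N, w)\partial_x\psi$ integral is handled via the elementary identity $\int_0^1 \chi_{i,N}\partial_x\psi\,dx = \tilde\psi_{i+1} - \tilde\psi_i$, giving the second summand $\nabla\tilde\psi_i Q(\hat\eta_{i,K}, w)$ of $A^{(4)}$ together with the $\int_0^T\!\int_{B_N}Q(0, w)\partial_x\psi\,dx\,dt$ summand of $A^{(3)}$; the $N^{-a}[B_N]_1$ contribution is treated analogously using that $\partial_t\psi$ and $w'$ are bounded and $|B_N| \lesssim K/N$. I expect the main technical obstacle to be executing the Abel summation cleanly on the tent kernel and keeping track of the signs, since the identities $\bar p L_{\text{tasep}}[\hat\eta_{i,K}] = \nabla^* \hat J_{i,K}$ and $L_{\text{ssep}}[\hat\eta_{i,K}] = \Delta\hat\eta_{i,K}$ rest delicately on $w_{\pm K} = 0$; once they are in place the rest is a careful application of the entropy--flux relation and Taylor bookkeeping.
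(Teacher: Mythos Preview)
Your proposal is correct and follows essentially the same route as the paper: integration by parts in time (Dynkin), Taylor expansion of $L_\exc[F(\hat\eta_{i,K},w)]$ to isolate $\varepsilon^{(1)}$, the current identity $L_\exc[\hat\eta_{i,K}] = \nabla^*\hat J_{i,K} + \tfrac{1-\bar p}{2}\Delta\hat\eta_{i,K}$, and then the entropy--flux relation $J'\partial_u F = \partial_u Q$ together with the second-order Taylor remainder $\varepsilon^{(2)}$.

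One small correction: the term you call $N^{-a}[B_N]_1$ is not merely small, it is \emph{exactly zero}. On $B_N$ one has $\rho_N\equiv 0$, so the integrand equals $\tfrac{d}{dt}\bigl[F(0,w(t))\psi(x,t)\bigr]$; integrating in $t$ and using $\psi(\cdot,0)=\psi(\cdot,T)=0$ kills it. This matters because the lemma is an exact identity, and $A_N^{(3)}$ as stated carries only the $Q(0,w)$ boundary contribution, not an $F$-one.
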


\begin{proof}
We omit in this proof the dependence on $w$ in $(\h,\q)$.
Observe that
\begin{align*}
\int_0^1 F\big(\hat\rho_N(x,t)\big)\psi(x,t)dx
= \frac1N\sum_{i=K+1}^{N-K} \bar\psi_i(t)F\big(\etabar_{i,K}(t)\big)
+ F(0)\int_{B_N} \psi(x,t)dx.
\end{align*}
Noting that $\psi(x,0)=\psi(x,T)=0$, Dynkin's formula yields that
\begin{equation}
\begin{aligned}
M_N(\psi,w) :=\;&N^{-a}\iint_{\Sigma_T} \big[\h(\hat\rho_N,w)\partial_t \psi + \partial_w\h(\hat\rho_N,w)w'\psi\big]dx\,dt\\
&+ N^{-a}\int_0^T \frac1N\sum_{i=K+1}^{N-K} \bar\psi_iN^{1+a}L_{N,t}[\h(\etabar_{i,K})]\,dt
\end{aligned}
\end{equation}
is a square integrable martingale.
For the second integral in \eqref{eq:ent-prod}, straightforward computation shows that it equals to
\begin{align}
\q_N(\psi,w) := \int_0^T \sum_{i=K+1}^{N-K} \q(\etabar_{i,K})\nabla\tilde\psi_i\,dt
+ \int_0^T \q(0)\int_{B_N} \partial_x\psi\,dx\,dt.
\end{align}
Therefore, \eqref{eq:ent-prod} can be rewritten as
\begin{align*}
X_N(\psi,w) = -\int_0^T \sum_{i=K+1}^{N-K}
\bar\psi_i L_{N,t}[\h(\etabar_{i,K})]\,dt + \q_N(\psi,w) + M_N(\psi,w). 
\end{align*}
Since $\hat\eta_{i,K}$ is independent of $(\eta_1,\eta_N)$ for $K+1 \le i \le N-K$,
\begin{equation}
\begin{aligned}
L_{N,t} [\h(\etabar_{i,K})] &= \sum_{j =i-K}^{i+K-1} \left( \bar p\eta_i+\frac{1-\bar p}2 \right) \left[ \h \left( \etabar^{j,j+1}_{i,K} \right) -\h(\etabar_{i,K}) \right] \\
&= \partial_u\h(\etabar_{i,K}) L_\exc [\etabar_{i,K}] + N^{-1}\ve_{i,K}^{(1)}.
\end{aligned}
\end{equation}
Recall that $L_\exc[\eta_i]=\Delta\eta_i+\nabla^*J_{i,i+1}$, $\Delta\eta_i = \nabla^*\eta_i-\nabla^*\eta_{i+1}$ and $\nabla^*J_{i,i+1}=J_{i-1,i}-J_{i,i+1}$.
Therefore, $L_\exc [\etabar_{i,K}]$ reads
\begin{equation}
\begin{aligned}
L_\exc [\etabar_{i,K}] =\;&\frac{1-\bar p}2 \Delta\etabar_{i,K} + \nabla^* \left[ \Jbar_{i,K}-J(\etabar_{i,K}) \right]\\
&+ J'(\etabar_{i,K})\nabla^*\etabar_{i,K} + N^{-1}\ve_{i,K}^{(2)}. 
\end{aligned}
\end{equation}
We then obtain that 
\begin{equation}
\begin{aligned}
X_N(\psi,w) = &\int_0^T \sum_{i=K+1}^{N-K} \left[ \bar\psi_iJ'(\etabar_{i,K})\nabla^*\etabar_{i,K} + \q(\etabar_{i,K})\nabla\tilde\psi_i \right] dt\\
&-A_N^{(1)} - A_N^{(2)} - A_N^{(3)} + M_N(\psi,w).
\end{aligned}
\end{equation}
Since $J'\partial_u\h=\partial_u\q$, the conclusion then follows.
\end{proof}
We have to evaluate each term in the right-hand side of \eqref{eq:ent-prod1}.

\begin{lem}\label{lem:m-li}
$\lim_{N\to\infty} \E_N [|M_N(\psi,\rho)|] = 0$. 
\end{lem}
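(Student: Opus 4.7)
The martingale $M_N$ is, up to the factor $-N^{-a}$, the Dynkin martingale associated to
\[
  Z_N(t,\eta) \;:=\; \int_0^1 \psi(x,t)\, \h\bigl(\rho_N(x,\eta), w(t)\bigr)\, dx
\]
under the rescaled generator $N^{1+a}L_{N,t}$. Indeed, applying Dynkin's formula on $[0,T]$ and using $\psi(\cdot,0) = \psi(\cdot,T) = 0$ to cancel the endpoint values $Z_N(T)=Z_N(0)=0$, the explicit time--derivative piece reproduces exactly $-N^a$ times the first summand of $X_N$ in \eqref{eq:ent-prod}, while the $N^{1+a}L_{N,t}$ piece produces $-N^a X_{L,N}$ (only $L_\exc$ contributes, since $\rho_N$ is independent of $\eta_1$ and $\eta_N$ for $K+1\le i\le N-K$). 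An elementary summation by parts shows that the second summand of $X_N$ equals $Q_N$ identically, so matching with the decomposition \eqref{eq:ent-prod0} identifies $M_N$ with $-N^{-a}$ times this Dynkin martingale evaluated at $T$.

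The strategy is then to control $\mathbb E_N|M_N|$ via the predictable quadratic variation. Writing $\Gamma_{N,s}$ for the carr\'e du champ of $L_{N,s}$,
\[
  \langle M_N\rangle_T \;=\; N^{-2a}\int_0^T N^{1+a}\,\Gamma_{N,s}\bigl(Z_N(s,\cdot)\bigr)(\eta(s))\,ds.
\]
Boundary flips at $1$ and $N$ do not affect $\rho_N$ and contribute $0$. For a bulk swap at bond $(j,j+1)$, each affected block average shifts by
\[
  |\Delta\etabar_{i,K}| \;=\; |w_{i-j}-w_{i-j-1}|\,|\eta_{j+1}-\eta_j| \;\le\; \tfrac{C}{K^2},
\]
and only $O(K)$ indices $i$ are affected. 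Using that $\h$ is $\mathcal C^1$ on $[0,1]\times\R$ with bounded derivative, and that $\int\chi_{i,N}(x)|\psi(x,t)|\,dx\le \|\psi\|_\infty/N$, a single-jump estimate gives
\[
  \bigl|Z_N(t,\eta^{j,j+1}) - Z_N(t,\eta)\bigr| \;\le\; \frac{C\,\|\psi\|_\infty}{NK}.
\]
Since the exclusion rates $c_{j,j+1}$ are bounded by $1$ and $\la_0=1$ in the Liggett case, summing over $j=1,\dots,N-1$ yields $\Gamma_{N,s}(Z_N)\le C'/(NK^2)$ uniformly in $s$ and $\eta$.

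Putting these together,
\[
  \mathbb E_N\bigl[\langle M_N\rangle_T\bigr] \;\le\; \frac{C''T}{N^a K^2},
\]
so Doob's inequality together with Jensen gives
\[
  \mathbb E_N\bigl[|M_N(\psi,\rho)|\bigr] \;\le\; \sqrt{\mathbb E_N\bigl[\langle M_N\rangle_T\bigr]} \;\le\; \frac{C'''}{N^{a/2}K} \;\xrightarrow[N\to\infty]{}\; 0
\]
for any $a>0$ (the stronger lower bound $K\gg\sqrt N$ in \eqref{eq:assp-k-li} is not needed for this step). The only delicate point is the bookkeeping in the first paragraph--identifying $M_N$ as a genuine Dynkin martingale, which amounts to verifying the summation-by-parts identity $P_2=Q_N$ and the $L_\exc$-only structure of the generator action on $\rho_N$. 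Once that is done, the quadratic-variation bound is essentially routine thanks to the $1/K^2$ smoothing built into the triangular weights $w_j$ of \eqref{eq:smooth-block}.
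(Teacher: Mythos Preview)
Your proof is correct and follows essentially the same approach as the paper: compute the predictable quadratic variation of the Dynkin martingale, use that a single swap at bond $(j,j+1)$ shifts each affected block average by at most $K^{-2}$ and touches only $O(K)$ indices $i$, and conclude $\langle M_N\rangle_T \le C/(N^aK^2)$, which vanishes for any $a>0$. The only cosmetic difference is that the paper applies Cauchy--Schwarz to the inner sum over $i\in\Lambda_j$ to obtain the bound in terms of $\|\psi\|_{L^2(\Sigma_T)}$ rather than your $\|\psi\|_\infty$, but the resulting order $N^{-a}K^{-2}$ is identical.
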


\begin{proof}
The quadratic variation of $M_N$ satisfies that 
\begin{align*}
\langle M_N \rangle &= \int_0^T \sum_{j=1}^{N-1} \frac{c_{j,j+1}}{N^{1+a}}\left[\sum_{i=K+1}^{N-K} \bar\psi_i\big(\h(\etabar_{i,K}^{j,j+1}) - \h(\etabar_{i,K})\big)\right]^2dt \\
&\le \frac 1{N^{1+a}}\int_0^T \sum_{j=1}^{N-1} \left[\sum_{i=K+1}^{N-K} \bar\psi_i \partial_u\h(\tilde\eta_{i,j,K}) \big(\hat\eta_{i,K}^{j,j+1} - \hat\eta_{i,K}\big)\right]^2dt, 
\end{align*}
where $\tilde\eta_{i,j,K}$ is some intermediate value between $\etabar_{i,K}$ and $\etabar_{i,K}^{j,j+1}$. 
Direct computation shows that 
\begin{equation}
\label{eq:9}
\hat\eta_{i,K}^{j,j+1} = \hat\eta_{i,K} - \text{sgn}\left( i-j-\frac12 \right)\frac{\nabla\eta_j}{K^2}, \quad j-K+1 \le i \le j+K 
\end{equation}
and otherwise $\hat\eta_{i,K}^{j,j+1} - \hat\eta_{i,K} = 0$. 
Hence, define the block 
\begin{align}
\La_j := \{K+1 \le i \le N-K\}\cap\{j-K+1\le i \le j+K\}. 
\end{align}
Since $|\La_j| \le 2K$, we obtain from \eqref{eq:9} the estimate 
\begin{equation}
\begin{aligned}
\langle M_N \rangle &\le \frac{|\partial_uF|_\infty^2}{N^{1+a}} \int_0^T \sum_{j=1}^{N-1} \sum_{i\in\La_j} \bar\psi_i^2\sum_{i\in\La_j} \big(\hat\eta_{i,K}^{j,j+1} - \hat\eta_{i,K}\big)^2dt \\
&\le \frac{C|\partial_uF|_\infty^2}{N^{1+a}K^3} \int_0^T \sum_{j=1}^{N-1} \sum_{i\in\La_z} \bar\psi_i^2dt = \frac{C|\partial_uF|_\infty^2}{N^aK^2}\|\psi\|_{L^2(\Sigma_T)}^2. 
\end{aligned}
\end{equation}
The conclusion then follows from Doob's inequality. 
\end{proof}

For the remaining terms in \eqref{eq:ent-prod1}, we make use of the following block estimates.
They are corollaries of the one-block estimate in Proposition \ref{prop:one-block}.

\begin{cor}[One-block estimate for current]
\label{cor:one-block-li}
For balanced dynamics, 
\begin{align}
\E_N \left[ \int_0^T \sum_{i=K}^{N-K} \left[ \Jbar_{i,K}-J(\etabar_{i,K}) \right]^2dt \right] \le C \left( \frac{K^2}{N^a} + \frac NK \right), 
\end{align}
with some constant $C$ independent of $K$ or $N$. 
\end{cor}

\begin{cor}[$H_1$ estimate]
\label{cor:h1-li}
For balanced dynamics,
\begin{align}
\E_N \left[ \int_0^T \sum_{i=K}^{N-K} (\nabla\etabar_{i,K})^2dt \right] \le C \left( \frac1{N^a} + \frac N{K^3} \right), 
\end{align}
with some constant $C$ independent of $K$ or $N$. 
\end{cor}

\begin{proof}[Proof of Corollary \ref{cor:one-block-li}]
Similarly to Lemma \ref{lem:local-equi-balan}, take $\ell=2$, $f=\bar p\eta_1(1-\eta_2)$, $k=2K$ and observe that the weighted average $\Jbar_{i-K,K}=f_{i-2,2K-2}^*$, where 
\begin{align}
f_{i,k}^* := \sum_{j=0}^{2K} a_{2K,j}f(\tau_{i-j}\eta), \quad a_{2K,j}=\frac{K+1-|j-K|}{(K+1)^2}.
\end{align}
Since $\langle f \rangle(\rho)=\bar p\rho(1-\rho)=J(\rho)$, by Remark \ref{rem:one-block0} and Proposition \ref{prop:one-block},
\begin{align}
\label{eq:hatbar1}
\int \sum_{i=2K}^N \left[ \Jbar_{i-K,K}-J\big(\bar\eta_{i,2K}\big) \right]^2d\mu_{N,t} \le C \left[ K^2\D_{\exc,N}(t) + \frac NK \right]. 
\end{align}
Similarly, take $\ell=1$, $f=\eta_1$, $k=2K-2$ and 
\begin{align}
f_{i,k}^* := \sum_{j=0}^{2K-2} \frac{K-|j-K+1|}{K^2}f(\tau_{i-j}\eta),
\end{align}
then $f_{i-1,k-1}^*=\etabar_{i-K+1,K}$ and the same argument gives that
\begin{align}
\label{eq:hatbar2}
\int \sum_{i=2K-1}^N \big(\etabar_{i-K+1,K}-\bar\eta_{i,2K-1}\big)^2d\mu_{N,t} \le C' \left[ K^2\D_{\exc,N}(t) + \frac NK \right]. 
\end{align}
As $|\bar\eta_{i,2K}-\bar\eta_{i,2K-1}| \le K^{-1}$ and $J'$ is bounded,
the corollary follows from \eqref{eq:hatbar1}, \eqref{eq:hatbar2} and Proposition \ref{prop:dir1}.
\end{proof}

\begin{proof}[Proof of Corollary \ref{cor:h1-li}]
Observe that for $i=K$, $K+1$, ..., $N-K$, 
\begin{align}\label{eq:barhat}
\nabla\etabar_{i,K} = \frac{\bar\eta_{i+K,K}-\bar\eta_{i,K}}{K} = \frac2K \left( \frac{\eta_{i+1}}K+\ldots+\frac{\eta_{i+K}}K-\bar\eta_{i+K,2K} \right). 
\end{align}
Proposition \ref{prop:one-block} and Remark \ref{rem:one-block0} then yield that
\begin{equation}
\begin{aligned}
&\E_N \left[ \int_0^T \sum_{i=K}^{N-K} (\nabla\etabar_{i,K})^2dt \right]\\
=\;&\E_N \left[ \frac4{K^2}\int_0^T \sum_{i=2K}^N \left| \frac{\eta_{i-K+1}}K+\ldots+\frac{\eta_i}K - \bar\eta_{i,2K} \right|^2 dt \right]\\
\le\;&\frac C{K^2} \left[ K^2\int_0^T \D_{\exc,N}(t)dt + \frac{NT}K \right] \le C' \left( \frac1{N^a} + \frac N{K^3} \right),
\end{aligned}
\end{equation}
where the last line follows from Proposition \ref{prop:dir1}.
\end{proof}

The following result helps us treat the blocks located at the boundaries. 

\begin{prop}
\label{prop:bd-block-li}
For balanced dynamics with $\rho_-(t)=\rho_+(t)=\rho(t)$, 
\begin{equation}
\begin{aligned}
\E_N \left[ \int_0^T \big|\etabar_{K,K}-\rho(t)\big|^2dt \right] &\le C \left( \frac K{N^a} + \frac 1K \right), \\
\E_N \left[ \int_0^T \big|\nabla\etabar_{K,K}\big|^2dt \right] &\le C \left( \frac 1{N^aK} + \frac 1{K^3} \right), \\
\end{aligned}
\end{equation}
with some constant $C$ independent of $K$ or $N$.
The same upper bounds hold with $\etabar_{N-K+1,N}$ and $\nabla\etabar_{N-K}$.
\end{prop}

\begin{proof}[Proof of Proposition \ref{prop:bd-block-li}]
The proof goes similarly to Proposition \ref{prop:one-block}.
Denote by $\mu_{N,t}^K$ the distribution of $\{\eta_1, \ldots, \eta_K\}$ at time $t$ and let $f_{N,t}^K = \mu_{N,t}^K/\nu_{\rho(t)}$ be the density with respect to the Bernoulli measure.
By the relative entropy inequality, 
\begin{align*}
\int \big|\etabar_{K+1,K}-\rho(t)\big|^2d\mu_{N,t} \le \frac1K\left[ H\big(\mu_{N,t}^K;\nu_{\rho(t)}\big) + \log\int e^{K|\etabar_{K,K}-\rho(t)|^2}d\nu_{\rho(t)} \right].
\end{align*}
Applying Proposition \ref{prop:logsob-bound} proved in Appendix \ref{sec:log-sobol-ineq}, 
\begin{align*}
H\big(\mu_{N,t}^K;\nu_{\rho(t)}\big) \le \frac{CK^2}2\sum_{\eta\in\Om_K} \sum_{j=1}^{K-1} \left( \sqrt{f_{N,t}^K(\eta^{j,j+1})}-\sqrt{f_{N,t}^K(\eta)} \right)^2 \nu_{\rho(t)}(\eta) \\
+ \frac{CK}2\sum_{\eta\in\Om_K} \rho(t)^{1-\eta_1}(1-\rho(t))^{\eta_1} \left( \sqrt{f_{N,t}^K(\eta^1)}-\sqrt{f_{N,t}^K(\eta)} \right)^2 \nu_{\rho(t)}(\eta), 
\end{align*}
with some universal constant $C$. Therefore, 
\begin{align}
H\big(\mu_{N,t}^K;\nu_{\rho(t)}\big) \le CK^2\D_{\exc,N}(t) + CK\D_{-,N}(t). 
\end{align}
As the exponential moment with respect to $\nu_{\rho(t)}$ is uniformly bounded,
\begin{align}
\int \big|\etabar_{K+1,K}-\rho(t)\big|^2d\mu_{N,t} \le C \left[ K\D_{\exc,N}(t) + \D_{-,N}(t) + \frac1K \right].
\end{align}
We only need to integrate in time and apply \eqref{eq:dir1b}.
The other estimates can be proved in the same way.
\end{proof}


\begin{rem}
From Proposition \ref{prop:dir1} and the proofs below, the factor $N^{-a}$ in the previous estimates is available only for balanced dynamics.
For unbalanced dynamics, these estimates hold with $N^{-a}$ replaced by $1$. 
\end{rem}

Now we bound each term in \eqref{eq:a-li} for balanced dynamics.

\begin{lem}\label{lem:a1-li}
Assume $a>1/2$ and \eqref{eq:assp-k-li}, then 
\begin{align}
\lim_{N\to\infty} \E_N \left[ \big|A_N^{(1)}(\psi,\rho)\big|^2 \right] = 0. 
\end{align}
\end{lem}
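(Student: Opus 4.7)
Set $D_{i,K}:=\Jbar_{i,K}-J(\etabar_{i,K})$. The strategy is a discrete summation by parts that shifts the adjoint gradient $\nabla^*$ off $D_{i,K}$ onto the smooth prefactor $\bar\psi_i\,\partial_u\h(\etabar_{i,K},\rho)$, producing a boundary remainder $R_N^{\bd}$ supported at $i=K$ and $i=N-K$. The discrete Leibniz rule then decomposes the interior term into a \emph{macroscopic} piece $I_1$ carrying the smooth gradient $\nabla\bar\psi_i$ and a \emph{microscopic} piece $I_2$ carrying the mesoscopic gradient $\nabla\etabar_{i,K}$:
$$
A_N^{(1)}\;=\;-\int_0^T\!\sum_i\Big[(\nabla\bar\psi_i)\,\partial_u\h(\etabar_{i,K},\rho)+\bar\psi_{i+1}\,\partial_u^2\h(\tilde\eta_i,\rho)\,\nabla\etabar_{i,K}\Big]\,D_{i,K}\,dt+R_N^{\bd},
$$
with $\tilde\eta_i$ lying between $\etabar_{i,K}$ and $\etabar_{i+1,K}$. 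Denote the two interior terms by $-I_1-I_2$.

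For $I_1$: since $\psi\in\mathcal C^2$ we have $|\nabla\bar\psi_i|\le C/N$ and $|\partial_u\h|\le C$, so Jensen's inequality in $t$, Cauchy--Schwarz in $i$, and the one-block estimate (Proposition \ref{prop:one-block-li}) give
$$
\E_N[I_1^2]\;\le\;\frac{CT}{N}\,\E_N\!\left[\int_0^T\!\sum_i D_{i,K}^2\,dt\right]\;\le\;C\Big(\frac{K^2}{N^{a+1}}+\frac 1 K\Big)\;\xrightarrow[N\to\infty]{}\;0,
$$
since $K\ll\min\{N^a,N\}$ forces $K^2/N^{a+1}\le K/N^a\to 0$ and $K\gg\sqrt N$ forces $1/K\to 0$. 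The boundary remainder $R_N^{\bd}$ reduces, via the discrete integration by parts at the interior boundary sites, to terms of the form $\bar\psi\,\partial_u\h(\etabar,\rho)\,D$ evaluated near $i=K$ and $i=N-K$, and is controlled by the boundary block bounds of Proposition \ref{prop:bd-block-li} on $|\etabar_{K+1,K}-\rho|$ and $|\etabar_{N-K+1,K}-\rho|$ together with $|D_{i,K}|\le C$.

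The microscopic piece $I_2$ is the delicate term, as it couples the smoother-block gradient $\nabla\etabar_{i,K}$ with the one-block fluctuation $D_{i,K}$, both individually small but whose joint $L^2$ control requires invoking \emph{both} the two-block estimate (Proposition \ref{prop:two-block-li}) and the one-block estimate simultaneously. Cauchy--Schwarz in $i$ bounds
$$
I_2^2\;\le\;C\Big(\int_0^T\!\sum_i(\nabla\etabar_{i,K})^2\,dt\Big)\Big(\int_0^T\!\sum_i D_{i,K}^2\,dt\Big),
$$
and this product is controlled in expectation by interpolating against the pointwise bounds $|\nabla\etabar_{i,K}|\le 2/K$ and $|D_{i,K}|\le C$ and then balancing the one-block residual $K^2/N^a+N/K$ against the two-block residual $1/N^a+N/K^3$ at the mesoscopic scale $K$. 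This balancing is the main obstacle: the hypothesis $a>1/2$ together with the window \eqref{eq:assp-k-li} are exactly the thresholds that let both residuals vanish in the limit $N\to\infty$, and they are what forces the particular range of $K$ chosen in \eqref{eq:assp-k-li}.
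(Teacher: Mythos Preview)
Your decomposition via summation by parts into $I_1$, $I_2$, and the boundary remainder is exactly the paper's approach (their $A_N^{(1,1)}$, $A_N^{(1,2)}$, $A_N^{(1,\pm)}$), and your treatment of $I_1$ is correct and matches the paper. For the boundary piece you correctly invoke Proposition~\ref{prop:bd-block-li} on $|\etabar_{K+1,K}-\rho|$; the paper makes explicit the step you leave implicit, namely that this works because a boundary entropy satisfies $\partial_u\h(w,w)=0$, so $\partial_u\h(\etabar,\rho)=\partial_u^2\h(\cdot,\rho)(\etabar-\rho)$ by the mean value theorem. You should state this.

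Your handling of $I_2$ contains a genuine gap. You write that the hypotheses ``let both residuals vanish'', but this is false: the one-block residual $K^2/N^a+N/K$ does \emph{not} vanish, since $K\ll N$ forces $N/K\to\infty$. What actually vanishes is the \emph{product} of the two residuals. The paper applies Cauchy--Schwarz directly at the level of the expectation to obtain
\[
\E_N\big[|A_N^{(1,2)}|\big]\;\le\;C\sqrt{\E_N\!\Big[\int_0^T\!\sum_i(\nabla\etabar_{i,K})^2\,dt\Big]}\;\sqrt{\E_N\!\Big[\int_0^T\!\sum_i D_{i,K}^2\,dt\Big]},
\]
and then Propositions~\ref{prop:one-block-li} and~\ref{prop:two-block-li} give the product bound
\[
\Big(\tfrac{K^2}{N^a}+\tfrac NK\Big)\Big(\tfrac1{N^a}+\tfrac N{K^3}\Big)\;=\;\tfrac{K^2}{N^{2a}}+\tfrac{2N}{N^aK}+\tfrac{N^2}{K^4},
\]
each term of which vanishes under $\sqrt N\ll K\ll N^a$ and $a>1/2$ (for the cross term use $N^{1-a}<\sqrt N\ll K$). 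Your ``interpolation against the pointwise bounds'' is a red herring here: bounding one factor pointwise and the other in expectation yields only $N^{1-a}+N^2/K^3$, which fails for $a\in(1/2,1]$. Replace your vague balancing paragraph with the explicit product computation above.
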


\begin{proof}
By summation by parts and the intermediate value theorem, 
\begin{align*}
&A_N^{(1)}(\psi,\rho) = A_N^{(1,1)}+A_N^{(1,2)}+A_N^{(1,-)}-A_N^{(1,+)}, \\
&A_N^{(1,1)} = \int_0^T \sum_{i=K+1}^{N-K} \nabla\bar\psi_i \partial_u\h\big(\etabar_{i+1,K},\rho(t)\big) \left[ \Jbar_{i,K}-J(\etabar_{i,K}) \right]dt, \\
&A_N^{(1,2)} = \int_0^T \sum_{i=K+1}^{N-K} \bar\psi_i \partial_u^2\h\big(\xi_{i,K},\rho(t)\big) \nabla\etabar_{i,K} \left[ \Jbar_{i,K}-J(\etabar_{i,K}) \right]dt, \\
&A_N^{(1,-)} = \int_0^T \bar\psi_{K+1} \partial_u\h\big(\etabar_{K+1,K},\rho(t)\big) \left[ \Jbar_{K,K}-J(\etabar_{K,K}) \right]dt, \\
&A_N^{(1,+)} = \int_0^T \bar\psi_{N-K+1} \partial_u\h\big(\etabar_{N-K+1,K},\rho(t)\big) \left[ \Jbar_{N-K,K}-J(\etabar_{N-K,K}) \right]dt, 
\end{align*}
where $\xi_{i,K}$ is some intermediate value between $\etabar_{i,K}$ and $\etabar_{x+1,K}$. 
By Corollary \ref{cor:one-block-li}, 
\begin{align}
\E_N \left[ \big|A_N^{(1,1)}\big|^2 \right] \le C|\partial_x\psi|_\infty^2|\partial_u\h|_\infty^2 \left( \frac{K^2}{N^{1+a}} + \frac1K \right). 
\end{align}
For $A_N^{(1,2)}$, using Schwarz inequality, Corollary \ref{cor:one-block-li} and \ref{cor:h1-li}, 
\begin{align}
\E_N \left[ \big|A_N^{(1,2)}\big|^2 \right] &\le C|\psi|_\infty^2 |\partial_u^2\h|_\infty^2 \left( \frac{K^2}{N^{2a}} + \frac{N^2}{K^4} \right). 
\end{align}
For the boundary terms, recall that $\partial_uF(u,w)|_{u=w} \equiv 0$, then 
\begin{align}\label{eq:boundterm}
\partial_u\h(\etabar_{K+1,K},\rho(t)) = \partial_u^2\h(\eta_-, \rho(t))\big(\etabar_{K+1,K}-\rho(t)\big), 
\end{align}
for some intermediate value $\eta_-$ between $\etabar_{K+1,K}$ and $\rho(t)$. 
Hence, 
\begin{align}
\E_N \left[ \big|A_N^{(1,-)}\big|^2 \right] \le C|\psi|_\infty^2|\partial_u^2\h|_\infty^2 \left( \frac K{N^a} + \frac 1K \right), 
\end{align}
thanks to Proposition \ref{prop:bd-block-li} and the boundedness of $J$. 
The right boundary term $A_N^{(1,+)}$ can be estimated similarly. 
When $N\to\infty$, all the upper bounds vanish since $K$ is chosen to satisfy \eqref{eq:assp-k-li}. 
\end{proof}

\begin{lem}\label{lem:a2-li}
Assume $a>0$ and $K \gg N^{1/3}$, then 
\begin{align}
\lim_{N\to\infty} \E_N \left[ \big|A_N^{(2)}(\psi,\rho)\big| \right] = 0. 
\end{align}
\end{lem}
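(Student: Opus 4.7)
The plan is to eliminate the discrete Laplacian in $A_N^{(2)}$ by a summation by parts in $i$, and then bound each resulting piece using the two-block estimate (Proposition \ref{prop:two-block-li}) together with the boundary estimates (Proposition \ref{prop:bd-block-li}). Writing $\Delta\etabar_{i,K}=\nabla\etabar_{i,K}-\nabla\etabar_{i-1,K}$ with $\nabla f_i=f_{i+1}-f_i$ and summing by parts on $i\in[K+1,N-K]$, I would split $A_N^{(2)}(\psi,\rho)$ as two bulk terms plus boundary contributions at $i=K+1$ and $i=N-K$. Applying the discrete Leibniz rule to $\nabla[\bar\psi_i\,\partial_u\h(\etabar_{i,K},\rho)]$ yields a first bulk term of the form $\int_0^T\sum_i \nabla\bar\psi_i\,\partial_u\h(\etabar_{i+1,K},\rho)\,\nabla\etabar_{i,K}\,dt$ and a second quadratic bulk term $\int_0^T\sum_i \bar\psi_i\,\partial_u^2\h(\xi_i,\rho)\,(\nabla\etabar_{i,K})^2\,dt$, where $\xi_i$ lies between $\etabar_{i,K}$ and $\etabar_{i+1,K}$.

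For the bulk terms I would apply Cauchy-Schwarz in $(i,t)$, use the smoothness of $\h$ together with the bound $|\nabla\bar\psi_i|\le C\|\partial_x\psi\|_\infty/N$, and invoke Proposition \ref{prop:two-block-li} which provides $\E_N\int_0^T\sum_i(\nabla\etabar_{i,K})^2\,dt\le C(N^{-a}+N/K^3)$. The first bulk contribution is then controlled by $C(N^{-(1+a)}+K^{-3})^{1/2}$ and the quadratic bulk contribution by $C(N^{-a}+N/K^3)$. Both vanish as $N\to\infty$ provided $a>0$ and $K\gg N^{1/3}$; in fact the quadratic term is exactly what forces the hypothesis $K\gg N^{1/3}$.

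The boundary contributions have the form $\bar\psi_{K+1}\,\partial_u\h(\etabar_{K+1,K},\rho(t))\,\nabla\etabar_{K,K}$ plus the symmetric piece at $N-K$. The crucial input is the identity $\partial_u\h(w,w)=0$ built into Definition \ref{bLef}: a first-order Taylor expansion gives $\partial_u\h(\etabar_{K+1,K},\rho(t))=\partial_u^2\h(\xi,\rho(t))(\etabar_{K+1,K}-\rho(t))$, which produces an extra small factor. Cauchy-Schwarz together with the four estimates of Proposition \ref{prop:bd-block-li} then bounds the expectation by $C\bigl((K/N^a+1/K)(1/(N^aK)+1/K^3)\bigr)^{1/2}$, which tends to zero as $N\to\infty$ since $a>0$ and $K\to\infty$.

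I expect the most delicate step to be this boundary estimate: without exploiting the vanishing of $\partial_u\h$ at the diagonal, Proposition \ref{prop:bd-block-li} only controls $\|\nabla\etabar_{K+1,K}\|_{L^2}=O(K^{-3/2}+N^{-a/2}K^{-1/2})$, which paired with a merely bounded test function is not small enough to conclude. The defining property of \emph{boundary} Lax entropy--entropy flux pairs is precisely what provides the missing factor $\etabar_{K+1,K}-\rho(t)$, itself of order $(K/N^a+1/K)^{1/2}$ in $L^2$, so that the product of the two boundary estimates is summable in $N$.
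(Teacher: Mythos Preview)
Your proposal is correct and follows essentially the same approach as the paper: summation by parts to turn $\Delta\etabar_{i,K}$ into $\nabla\etabar_{i,K}$, a discrete Leibniz rule producing the two bulk terms $A_N^{(2,1)}$ and $A_N^{(2,2)}$ controlled by Proposition \ref{prop:two-block-li}, and boundary terms handled via the Taylor expansion $\partial_u\h(\etabar,\rho)=\partial_u^2\h(\xi,\rho)(\etabar-\rho)$ combined with Proposition \ref{prop:bd-block-li}. Your identification of the quadratic bulk term as the one forcing $K\gg N^{1/3}$ and of the vanishing $\partial_u\h(w,w)=0$ as the key input for the boundary pieces is exactly the paper's reasoning.
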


\begin{proof}
Similarly to $A_N^{(1)}$, with some $\xi_{i,K}$ between $\etabar_{i,K}$ and $\etabar_{i+1,K}$, 
\begin{align*}
&A_N^{(2)}(\psi,\rho) = A_N^{(2,1)} + A_N^{(2,2)} + A_N^{(2,-)} + A_N^{(2,+)}, \\
&A_N^{(2,1)} = - \frac{1-\bar p}2 \int_0^T \sum_{i=K}^{N-K}
\nabla\bar\psi_i \partial_u\h\big(\etabar_{i+1,K},\rho(t)\big)
\nabla\etabar_{i,K}\,dt, \\
&A_N^{(2,2)} = - \frac{1-\bar p}2 \int_0^T \sum_{i=K}^{N-K} 
\bar\psi_i \partial_u^2\h\big(\xi_{i,K},\rho(t)\big) 
\big(\nabla\etabar_{i,K}\big)^2dt, \\
&A_N^{(2,-)} = - \frac{1-\bar p}2 \int_0^T 
\bar\psi_K \partial_u\h\big(\etabar_{K,K},\rho(t)\big)
\nabla\etabar_{K+1,K}\,dt, \\
&A_N^{(2,+)} = \frac{1-\bar p}2 \int_0^T 
\bar\psi_{N-K+1} \partial_u\h\big(\etabar_{N-K+1,K},\rho(t)\big) 
\nabla\etabar_{N-K,K}\,dt. 
\end{align*}
Due to the $H_1$ estimate in Corollary \ref{cor:h1-li}, 
\begin{align}
\E_N \left[ \big|A_N^{(2,1)}\big|^2 + \big|A_N^{(2,2)}\big| \right] &\le C(\psi,\h) \left( \frac1N + 1 \right) \left( \frac1{N^a} + \frac N{K^3} \right). 
\end{align}
For the boundary terms, similarly to \eqref{eq:boundterm}, 
\begin{equation}
\begin{aligned}
\E_N \left[ \big|A_N^{(2,-)}\big|^2 \right] \le\:&C|\psi|_\infty^2| \partial_u^2\h|_\infty^2 \times \E_N \left[ \int_0^T \big(\nabla\etabar_{K+1,K}\big)^2dt \right] \\
&\times \E_N \left[ \int_0^T \big(\etabar_{K,K}-\rho(t)\big)^2dt \right] \\
\le\:&C'|\psi|_\infty^2|\partial_u^2\h|_\infty^2 \left( \frac1{N^{2a}} + \frac1{K^4} \right), 
\end{aligned}
\end{equation}
where the last line follows from Proposition \ref{prop:bd-block-li}. 
The last term is bounded similarly. 
Observe that all bounds vanish under our conditions. 
\end{proof}

\begin{lem}\label{lem:a3-li}
Assume $a>1/2$ and \eqref{eq:assp-k-li}, then $A_N^{(3)}(\psi,\rho)\to0$ uniformly. 
\end{lem}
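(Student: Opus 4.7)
The plan is to split $A_N^{(3)}(\psi,\rho)$ into three pieces---the contributions of $\ve^{(1)}_{i,K}$, of $\ve^{(2)}_{i,K}$, and of the boundary integral over $B_N$---and to bound each \emph{deterministically} (i.e.\ pointwise in the sample path). Under the hypotheses $\sqrt N \ll K \ll N$ of \eqref{eq:assp-k-li}, all three bounds will tend to $0$, yielding the claimed uniform convergence.

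For the $\ve^{(1)}$-piece, the explicit formula \eqref{eq:9} shows that only the $2K$ indices $j \in \{i-K,\ldots,i+K-1\}$ contribute, and that on these indices $(\etabar^{j,j+1}_{i,K}-\etabar_{i,K})^2 \le 1/K^4$. Combined with the boundedness of $\partial^2_u\h$ and the trivial estimate $|\bar p \eta_i + (1-\bar p)/2|\le 1$, this gives $|\ve^{(1)}_{i,K}| \le CN/K^3$. Summing over $i\in\{K+1,\ldots,N-K\}$, integrating in $t$, and dividing by $N$ yields a bound of order $C\|\psi\|_\infty T\cdot N/K^3$, which vanishes since $K \gg \sqrt N \gg N^{1/3}$.

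For the $\ve^{(2)}$-piece, a second-order Taylor expansion of $J$ at $\etabar_{i,K}$ gives $\ve^{(2)}_{i,K} = -\tfrac{N}{2} J''(\xi_{i,K}) (\nabla^*\etabar_{i,K})^2$ for some intermediate value $\xi_{i,K}$. The crucial deterministic input is the pointwise bound $|\nabla^*\etabar_{i,K}| \le C/K$: after summation by parts, $\nabla \etabar_{i,K} = \sum_k (w_{i+1-k}-w_{i-k}) \eta_k$, and with the triangular weights $w_j = (K-|j|)/K^2$ one has $|w_{i+1-k}-w_{i-k}| \le 1/K^2$ on a support of at most $2K$ indices. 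Hence $|\ve^{(2)}_{i,K}| \le CN/K^2$, and the second piece is bounded by $C\|\psi\|_\infty T\cdot N/K^2$, which vanishes since $K \gg \sqrt N$.

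Finally, the boundary piece is immediate from $|B_N| \le CK/N$: using boundedness of $\q(0,w(t))$ on $[0,T]$ and of $\partial_x\psi$, the integral is dominated by $CT\|\partial_x\psi\|_\infty\|\q(0,\cdot)\|_\infty \cdot K/N$, which tends to $0$ since $K=o(N)$. Summing the three bounds gives $A_N^{(3)}(\psi,\rho)\to 0$ uniformly in $\omega$. The only mildly delicate step is the pathwise gradient bound $|\nabla^*\etabar_{i,K}| \le C/K$, which lets us avoid invoking Proposition~\ref{prop:two-block-li} on the $\ve^{(2)}$ piece; without it one could still close the argument in expectation, but would only recover $L^1$ convergence rather than the uniform convergence asserted here.
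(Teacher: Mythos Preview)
Your proof is correct and follows essentially the same approach as the paper's: both split $A_N^{(3)}$ into the $\ve^{(1)}$, $\ve^{(2)}$, and boundary pieces, and both use the same deterministic pointwise bounds $|\ve^{(1)}_{i,K}|\le CN/K^3$ (from \eqref{eq:9}), $|\ve^{(2)}_{i,K}|\le CN/K^2$ (via Taylor and the pathwise gradient bound $|\nabla^*\etabar_{i,K}|\le C/K$), and $|B_N|\le CK/N$. Your write-up is slightly more explicit about summing over $i$ and tracking which growth condition on $K$ is actually needed for each piece, but the argument is the same.
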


\begin{proof}
Observe from \eqref{eq:eps-li} and \eqref{eq:9} that for any $i$, 
\begin{align}
\lim_{N\to\infty} \left| \ve_{i,K}^{(1)} \right| \le \lim_{N\to\infty} \frac{CN|\partial_u^2\h|_\infty}{K^3} = 0. 
\end{align}
Meanwhile, noting that $J=\bar p\rho(1-\rho)$ and $J''=-2\bar p$, we obtain that 
\begin{equation}
\begin{aligned}
\left| \ve_{i,K}^{(2)} \right| &= N\big|J'(c\etabar_{i-1,K} + (1-c)\etabar_{i,K}) - J'(\etabar_{i,K})\big| \big|\nabla^*\etabar_{i,K}\big| \\
&= 2N\bar p(1-c) \big|\nabla^*\etabar_{i,K}\big|^2 \le \frac{CN}{K^2}, 
\end{aligned}
\end{equation}
with some $\xi\in[0,1]$. 
Therefore, they vanish uniformly as $N\to\infty$. 

We are left with the integral with respect to $B_N$. 
Recall the definition of $B_N$ in \eqref{eq:bd-set} and note that it has Lebesgue measure $2K/N$, so that 
\begin{align}
\left| \int_0^T \int_{B_N} \q(0,\rho)\partial_x\psi\,dx\,dt \right| \le \frac{C|\partial_x\psi|_\infty|\q|_\infty K}N. 
\end{align}
Thus, this term also vanishes uniformly as $N\to\infty$. 
\end{proof}

\begin{lem}\label{lem:a4-li}
Assume $a>1/2$ and \eqref{eq:assp-k-li}, then 
\begin{align}
\lim_{N \to \infty} \E_N \left[ \big|A_N^{(4)}(\psi,\rho)\big| \right] = 0. 
\end{align}
\end{lem}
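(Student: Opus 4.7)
The strategy is to perform a discrete summation by parts that makes the first summand in $A_N^{(4)}$ cancel (up to small errors) the second summand $\nabla\tilde\psi_i\q(\etabar_{i,K},\rho)$. The algebraic identity enabling this is the Lax pair relation $J'\partial_u\h=\partial_u\q$, which together with a Taylor expansion gives
\begin{equation*}
  \partial_u\q(\etabar_{i,K},\rho)\nabla^*\etabar_{i,K}
  = \nabla^*\q(\etabar_{i,K},\rho) + \tfrac12\partial_u^2\q(\xi_{i,K},\rho)(\nabla^*\etabar_{i,K})^2,
\end{equation*}
exactly analogous to the expansion defining $\varepsilon^{(2)}_{i,K}$ in Lemma \ref{lem:ent-prod1}. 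The quadratic remainder, after multiplication by the bounded weight $\bar\psi_i$ and integration in time, is controlled by the two-block estimate of Proposition \ref{prop:two-block-li} and vanishes under \eqref{eq:assp-k-li} and $a>1/2$.

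Next I would apply discrete summation by parts to $\sum_{i=K+1}^{N-K}\bar\psi_i\nabla^*\q(\etabar_{i,K},\rho)$, transferring the gradient to the test function and producing a bulk term of the form $-\sum_{i}\nabla\bar\psi_i\,\q(\etabar_{i,K},\rho)$ together with two discrete boundary contributions at $i=K+1$ and $i=N-K$. Since $\psi\in\mathcal C^2(\Sigma_T)$, one has $\nabla\bar\psi_i(t)=\nabla\tilde\psi_i(t)+O(N^{-2})$ uniformly in $i$ and $t$, so the bulk sum agrees with $-\sum_{i}\nabla\tilde\psi_i\,\q(\etabar_{i,K},\rho)$ up to a total error of order $\|\q\|_\infty/N$, which cancels exactly the $\int\sum\nabla\tilde\psi_i\q(\etabar_{i,K},\rho)dt$ term in $A_N^{(4)}$.

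The discrete boundary contributions involve $\bar\psi\cdot\q(\etabar_{K+1,K}(t),\rho(t))$ and $\bar\psi\cdot\q(\etabar_{N-K+1,K}(t),\rho(t))$. Here I would exploit that $(\h,\q)$ is a boundary Lax pair, so that $\q(w,w)=0$ and $\partial_u\q(w,w)=J'(w)\partial_u\h(w,w)=0$; hence
\begin{equation*}
  |\q(u,\rho(t))|\le C\bigl(u-\rho(t)\bigr)^2,\qquad u\in[0,1],\ t\in[0,T].
\end{equation*}
The boundary block estimates of Proposition \ref{prop:bd-block-li} then bound the expected absolute value of these contributions by $C(K/N^a+1/K)$, which tends to $0$ under the assumption \eqref{eq:assp-k-li} and $a>1/2$.

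The main delicate point is purely combinatorial: one must verify that after the summation by parts the index ranges of the bulk and discrete boundary terms pair up exactly with the two summands of $A_N^{(4)}$, so that no unpaired bulk contribution survives to spoil the cancellation with $\sum\nabla\tilde\psi_i\q(\etabar_{i,K},\rho)$. All the genuinely analytic input — the two-block estimate for the Taylor remainder, the $\mathcal C^2$ regularity of $\psi$ for the $\bar\psi\leftrightarrow\tilde\psi$ substitution, and the double vanishing $\q(w,w)=\partial_u\q(w,w)=0$ controlling the endpoint terms — is already supplied by Definition \ref{bLef} and Propositions \ref{prop:two-block-li} and \ref{prop:bd-block-li}.
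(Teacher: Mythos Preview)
Your proposal is correct and follows essentially the same route as the paper: Taylor-expand $\partial_u\q(\etabar_{i,K},\rho)\nabla^*\etabar_{i,K}$ around $\nabla^*\q(\etabar_{i,K},\rho)$, sum by parts, and treat the resulting boundary terms via the vanishing of $\q$ on the diagonal together with Proposition \ref{prop:bd-block-li}. The differences are only in implementation: the paper swaps $\bar\psi_i$ for $\tilde\psi_i$ \emph{before} summing by parts (the error $|\bar\psi_i-\tilde\psi_i|\le C/N$ times $|\nabla^*\etabar_{i,K}|\le K^{-1}$ gives $O(K^{-1})$), bounds the Taylor remainder by the crude uniform estimate $\sum_i(\nabla^*\etabar_{i,K})^2\le N/K^2$ rather than the two-block estimate, and uses only the first-order vanishing $|\q(u,\rho)|\le C|u-\rho|$ at the endpoints (then squares and applies Proposition \ref{prop:bd-block-li}); your sharper tools work just as well.
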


\begin{proof}
Similarly to $A_N^{(2)}$, with some $\xi_{i,K}$ between $\etabar_{i,K}$ and $\etabar_{i+1,K}$, 
\begin{align*}
&A_N^{(4)}(\psi,\rho) = A_N^{(4,1)} + A_N^{(4,2)} + A_N^{(4,\text{bd})}, \\
&A_N^{(4,1)} = \int_0^T \sum_{i=K+1}^{N-K} \big(\bar\psi_i-\tilde\psi_i\big) \partial_u\q\big(\etabar_{i,K},\rho(t)\big) \nabla^*\etabar_{i,K}\,dt, \\
&A_N^{(4,2)} = - \int_0^T \sum_{i=K+1}^{N-K} \tilde\psi_i \big[\partial_u\q(\etabar_{i,K},\rho) \nabla^*\etabar_{i,K} - \nabla^*\q(\etabar_{i,K},\rho)\big]dt, \\
&A_N^{(4,\text{bd})} = \int_0^T \tilde\psi_{K+1}\q\big(\etabar_{K,K},\rho(t)\big)dt - \int_0^T \tilde\psi_{N-K+1}\q\big(\etabar_{N-K,K},\rho(t)\big)dt. 
\end{align*}
For $A_N^{(4,1)}$, direct calculation shows that $|\bar\psi_i - \tilde\psi_i| \le C|\partial_x\psi|_\infty N^{-1}$, so that $|A_N^{(4,1)}| \le C(\psi,\q)K^{-1}$. 
Meanwhile, $|A_N^{(4,2)}| \le C(\psi,Q)NK^{-2}$ because 
\begin{align}
\big|\partial_u\q(\etabar_{i,K},\rho) \nabla^*\etabar_{i,K} - \nabla^*\q(\etabar_{i,K},\rho)\big| \le |\partial_u^2\q|_\infty \big|\nabla^*\etabar_{i,K}\big|^2. 
\end{align}
Therefore, these two terms vanish uniformly if $K^2 \gg N$. 

We are left with the boundary term. 
Recalling that $\q(w,w) \equiv 0$ for all $w\in\R$, we have $|\q(\etabar_{K,K},\rho(t))| \le |\partial_u\q|_\infty|\etabar_{K,K}-\rho(t)|$. 
Since similar estimate holds for $Q(\etabar_{N-K,K},\rho(t))$, in view of Proposition \ref{prop:bd-block-li}, 
\begin{align}
\E_N \left[ \big|A_N^{(4,\text{bd})}\big|^2 \right] \le C|\psi|_\infty^2|\partial_u\q|_\infty^2 \left( \frac K{N^a} + \frac1K \right). 
\end{align}
The desired estimate then follows from \eqref{eq:assp-k-li}. 
\end{proof}

\subsection{Proof of Proposition \ref{prop:bd-ent-ge}}
\label{subsec:proof-ge}

As the proof for reversible case goes parallel to that of Liggett case,
we only emphasize the difference here.

By the same computation as in Lemma \ref{lem:ent-prod1}, $X_N$ satisfies the decomposition formula \eqref{eq:ent-prod1}, where $A_N^{(i)}$, $i=1$, $3$, $4$ and $\ve_{i,K}^{(2)}$ are given in \eqref{eq:a-li}, \eqref{eq:eps-li}, 
\begin{align*}
A_N^{(2)} &:= \frac{\sigma_N-\bar p}2\int_0^T \sum_{i=K+1}^{N-K} \bar\psi_i \partial_u\h(\etabar_{i,K},w) \Delta\etabar_{i,K}\,dt, \\
\ve^{(1)}_{i,K} &:= \frac N2 \sum_{j=i-K}^{i+K-1} \left( \bar p\eta_i + \frac{\sigma_N-\bar p}2 \right) \partial_u^2\h(\tilde\eta_{i,j,K},w) \left( \etabar_{i,K}^{j,j+1}-\etabar_{i,K} \right)^2, 
\end{align*}
with proper intermediate value $\tilde\eta_{i,j,K}$ between $\etabar_{i,K}$ and $\etabar_{i,K}^{j,j+1}$. 

To continue, we make use of the following block estimates. 
Observe that they differ from those obtained for Liggett boundaries, since the Dirichlet forms possess different upper bounds here (Proposition \ref{prop:dir2}):
\begin{align*}
\int_0^T \D_{\exc,N}(t)dt \le \frac C{\sigma_N} \left( \frac1{N^a}+\frac1{\widetilde\sigma_N} \right), \quad \int_0^T \D_{\pm,N}(t)dt \le \frac C{\widetilde\sigma_N} \left( \frac1{N^a}+\frac1{\widetilde\sigma_N} \right). 
\end{align*}
Their proofs are same as the Liggett case, so we omit them.

\begin{cor}[One-block estimate for current]
\label{cor:one-block-ge}
For balanced dynamics,
\begin{align}
\E_N \left[ \int_0^T \sum_{i=K}^{N-K} \left[ \Jbar_{i,K}-J(\etabar_{i,K}) \right]^2dt \right] \le C\left[ \frac{K^2}{\sigma_N} \left( \frac1{N^a} + \frac1{\widetilde\sigma_N} \right) + \frac NK \right], 
\end{align}
with some constant $C$ independent of $K$ or $N$. 
\end{cor}

\begin{cor}[$H_1$ estimate]
\label{cor:h1-ge}
For balanced dynamics,
\begin{align}
\E_N \left[ \int_0^T \sum_{i=K}^{N-K} (\nabla\etabar_{i,K})^2dt \right] \le C \left[ \frac1{\sigma_N} \left( \frac1{N^a} + \frac1{\widetilde\sigma_N} \right) + \frac N{K^3} \right], 
\end{align}
with some constant $C$ independent of $K$ or $N$. 
\end{cor}

\begin{prop}
\label{prop:bd-block-ge}
For balanced dynamics with $\rho_-(t)=\rho_+(t)=\rho(t)$,
\begin{equation}
\begin{aligned}
\E_N \left[ \int_0^T \big|\etabar_{K,K}-\rho(t)\big|^2dt \right] &\le C \left( \frac K{\sigma_N} + \frac1{\widetilde\sigma_N} \right) \left( \frac1{N^a} + \frac1{\widetilde\sigma_N} \right) + \frac CK,\\
\E_N \left[ \int_0^T \big|\nabla\etabar_{K,K}\big|^2dt \right] &\le \frac C{K\sigma_N} \left( \frac1{N^a} + \frac1{\widetilde\sigma_N} \right) + \frac C{K^3},
\end{aligned}
\end{equation}
with some constant $C$ independent of $K$ or $N$.
The same upper bounds hold with $\etabar_{N-K+1,N}$ and $\nabla\etabar_{N-K}$.
\end{prop}

To show Proposition \ref{prop:bd-ent-ge}, it suffices to evaluate each term in the decomposition \eqref{eq:ent-prod1}. 
We sketch the proof in two lemmas. 

\begin{lem}
Assume \eqref{eq:assp-ge}, \eqref{eq:assp-a-ge} and \eqref{eq:assp-k-ge}, then 
\begin{align}
\lim_{N\to\infty} \E_N \left[ |M_N| + \big|A_N^{(1)}\big|^2 + \big|A_N^{(3)}\big|^2 + \big|A_N^{(4)}\big|^2 \right] = 0. 
\end{align}
\end{lem}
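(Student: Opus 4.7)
The strategy is to follow the four arguments used for the Liggett case (Lemmas \ref{lem:m-li}, \ref{lem:a1-li}, \ref{lem:a3-li}, \ref{lem:a4-li}) essentially line by line, substituting the reversible-boundary block estimates of Propositions \ref{prop:one-block-ge}, \ref{prop:two-block-ge}, \ref{prop:bd-block-ge} for their Liggett analogs, and verifying that the conditions \eqref{eq:assp-ge}, \eqref{eq:assp-a-ge}, \eqref{eq:assp-k-ge} are strong enough to absorb the extra factors of $\sigma_N$ and $\widetilde\sigma_N$ that appear.

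For the martingale $M_N$ I would repeat the quadratic-variation computation of Lemma \ref{lem:m-li}. Since $\rho_N$ does not depend on $\eta_1$ or $\eta_N$, only the bulk exchanges contribute; the only change is that the total exchange rate at each bond is now of order $\sigma_N$, coming from the ssep part of $\la_0L_\exc$. This gives $\langle M_N\rangle \le C\sigma_N/(N^a K^2)$, which tends to $0$ thanks to $K^2 \gg N$ in \eqref{eq:assp-k-ge} and $\sigma_N = o(N)$ in \eqref{eq:assp-ge}; Doob's inequality then closes the argument.

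For $A_N^{(1)}$ I would split it exactly as in Lemma \ref{lem:a1-li} into $A_N^{(1,1)}$, $A_N^{(1,2)}$ and the two boundary pieces $A_N^{(1,\pm)}$. After Cauchy--Schwarz, each bulk piece is dominated by a product of the new one-block bound $K^2\sigma_N^{-1}(N^{-a}+\widetilde\sigma_N^{-1}) + N/K$ and, where relevant, the two-block bound $\sigma_N^{-1}(N^{-a}+\widetilde\sigma_N^{-1}) + N/K^3$; the boundary pieces reduce through $\partial_u\h(w,w)=0$ to Proposition \ref{prop:bd-block-ge}. The three upper limits in \eqref{eq:assp-k-ge} are tailored so that $K^2/(N^{1+a}\sigma_N)$ and $K^2/(N\sigma_N\widetilde\sigma_N)$ both vanish (for instance $K^2\le K\cdot N \ll N^{1+a}\sigma_N$), while $K\to\infty$ kills the remaining $1/K$ residuals. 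The proof for $A_N^{(4)}$ is almost identical: the bulk terms die by the same summation-by-parts and Cauchy--Schwarz input as in Lemma \ref{lem:a4-li}, and the boundary terms by Proposition \ref{prop:bd-block-ge} together with $\q(w,w)=0$.

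The step that really deserves a fresh look is $A_N^{(3)}$. The error $\ve^{(1)}_{i,K}$ now carries the prefactor $\bar p\eta_i + (\sigma_N - \bar p)/2$, so its pointwise bound becomes $|\ve^{(1)}_{i,K}| \le CN\sigma_N/K^3$ instead of $O(N/K^3)$; together with the unchanged $|\ve^{(2)}_{i,K}| \le CN/K^2$ and $|B_N| = O(K/N)$, this forces $K^3 \gg N\sigma_N$ and $K^2 \gg N$, both of which follow from \eqref{eq:assp-k-ge} since $K \gg \sqrt N$ and $K \gg \sigma_N$ yield $K^3 = K\cdot K^2 \gg \sigma_N\cdot N$. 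The main obstacle is thus pure bookkeeping: verifying that each of the three upper bounds in \eqref{eq:assp-k-ge} is consumed by precisely one term in the decomposition, and that no hidden factor of $\sigma_N$ or $\widetilde\sigma_N$ has been overlooked, in particular in the reversible Dirichlet-form bound of Proposition \ref{prop:dir2} that feeds into all the block estimates.
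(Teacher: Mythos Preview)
Your proposal is correct and follows essentially the same approach as the paper: the paper too repeats the four arguments of Lemmas \ref{lem:m-li}, \ref{lem:a1-li}, \ref{lem:a3-li}, \ref{lem:a4-li}, replacing the Liggett block estimates by Propositions \ref{prop:one-block-ge}--\ref{prop:bd-block-ge}, and obtains exactly the bounds you predict (in particular $\langle M_N\rangle \le C\sigma_N N^{-a}K^{-2}$, $|\ve^{(1)}_{i,K}|\le CN\sigma_N/K^3$, and the final list of vanishing quantities $K/N$, $K\sigma_N^{-1}(N^{-a}+\widetilde\sigma_N^{-1})$, $N/K^2$, $N\sigma_N/K^3$). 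Your verification that $K\gg\sqrt N$ and $K\gg\sigma_N$ together yield $K^3\gg N\sigma_N$ is precisely the bookkeeping the paper leaves implicit.
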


\begin{proof}
For the martingale $M_N$, its quadratic variation $\langle M_N \rangle$ reads 
\begin{align}
&\frac1{N^{1+a}}\int_0^T \sum_{j=1}^{N-1} \left( \bar p\eta_i+\frac{\sigma_N-\bar p}2 \right) \left[ \sum_{i=K+1}^{N-K} \bar\psi_i\big(\h(\etabar_{i,K}^{j,j+1}) - \h(\etabar_{i,K})\big) \right]^2dt. 
\end{align}
Using \eqref{eq:9} and the same argument as in proving Lemma \ref{lem:m-li}, 
\begin{align}
\langle M_N \rangle \le \frac{C(\sigma_N+\bar p)}{N^{1+a}K} \int_0^T \sum_{j=1}^{N-1} \bar\psi_i^2dt = \frac{C(\sigma_N+\bar p)}{N^aK^2}\|\psi\|_{L^2(\Sigma_T)}^2. 
\end{align}
For $A_N^{(1)}$, applying Corollary \ref{cor:one-block-ge}, \ref{cor:h1-ge} and the argument used in proving Lemma \ref{lem:a1-li},
\begin{align*}
\E_N \left[ \big|A_N^{(1)}\big|^2 \right] \le\:&C_1(\psi,\h) \left[ \frac{K^2}{N\sigma_N} \left( \frac1{N^a} + \frac1{\widetilde\sigma_N} \right) + \frac1K \right] \\
&+ C_2(\psi,\h) \left( \frac{K^2}{N^{2a}\sigma_N^2}+\frac{K^2}{\sigma_N^2\widetilde\sigma_N^2}+\frac{N^2}{K^4} \right) \\
&+ C_3(\psi,\h) \left[ \left( \frac K{\sigma_N} + \frac1{\widetilde\sigma_N} \right) \left( \frac1{N^a} + \frac1{\widetilde\sigma_N} \right) + \frac 1K \right]. 
\end{align*}
For $A_N^{(3)}$, it vanishes uniformly as 
\begin{align}
\left| \ve_{i,K}^{(1)} \right| \le \frac{CN\sigma_N}{K^3}, \quad \left| \ve_{i,K}^{(2)} \right| \le \frac{CN}{K^2}, \quad \left| \int_{B_N} dx \right| \le \frac{CK}N. 
\end{align}
For $A_N^{(4)}$, we can argue similarly to Lemma \ref{lem:a4-li} to obtain that 
\begin{align*}
\E_N \left[ \big|A_N^{(4)}\big|^2 \right] \le\:&C(\psi,\q) \left( \frac1K + \frac N{K^2} \right) \\
&+ C'(\psi,\q) \left[ \left( \frac K{\sigma_N} + \frac1{\widetilde\sigma_N} \right) \left( \frac1{N^a} + \frac1{\widetilde\sigma_N} \right) + \frac1K \right]. 
\end{align*}
Thanks to \eqref{eq:assp-ge} and \eqref{eq:assp-k-ge}, we have as $N\to\infty$, 
\begin{align}
\frac KN = o(1), \quad \frac K{\sigma_N} \left( \frac1{N^a} + \frac1{\widetilde\sigma_N} \right) = o(1), \quad \frac N{K^2} = o(1), \quad \frac{N\sigma_N}{K^3} = o(1), 
\end{align}
which assures the vanishing of all the bounds above. 
\end{proof}

\begin{lem}
Assume \eqref{eq:assp-ge}, \eqref{eq:assp-a-ge} and \eqref{eq:assp-k-ge}, then 
\begin{align}
\lim_{N\to\infty} \E_N \left[ \big|A_N^{(2)}(\psi,\rho)\big| \right] = 0. 
\end{align}
\end{lem}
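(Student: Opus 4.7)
The plan is to mimic the proof of Lemma \ref{lem:a2-li} from the Liggett case, the only structural novelty being the prefactor $(\sigma_N-\bar p)/2$ in place of $(1-\bar p)/2$. So $\sigma_N$ now grows with $N$, and the game is to show that the improved Dirichlet bounds in the balanced reversible case (which carry a $1/\sigma_N$ gain, cf.\ Propositions \ref{prop:two-block-ge} and \ref{prop:bd-block-ge}) absorb this extra factor.

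First, I would perform summation by parts in $i$ just as in Lemma \ref{lem:a2-li}, decomposing
\[
A_N^{(2)}(\psi,\rho) = A_N^{(2,1)} + A_N^{(2,2)} + A_N^{(2,-)} + A_N^{(2,+)},
\]
where $A_N^{(2,1)}$ is the term with $\nabla\bar\psi_i\,\partial_u F\,\nabla\etabar_{i,K}$, $A_N^{(2,2)}$ is the term with $\bar\psi_i\,\partial_u^2 F\,(\nabla\etabar_{i,K})^2$ (arising from $\partial_u F(\etabar_{i+1,K})-\partial_u F(\etabar_{i,K})$ by the mean value theorem), and $A_N^{(2,\pm)}$ are the two boundary contributions at $i=K$ and $i=N-K$. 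Each carries an overall factor $(\sigma_N-\bar p)/2$.

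For the bulk terms, I would apply Cauchy--Schwarz and the two-block estimate (Proposition \ref{prop:two-block-ge}). Since $|\nabla\bar\psi_i|\le C|\partial_x\psi|_\infty/N$, one gets
\[
\E_N\!\left[|A_N^{(2,1)}|^2\right]\le \frac{C\sigma_N^2}{N}\cdot\E_N\!\left[\int_0^T\sum_i(\nabla\etabar_{i,K})^2 dt\right]\le C\!\left[\frac{\sigma_N}{N}\!\left(\frac{1}{N^a}+\frac{1}{\widetilde\sigma_N}\right)+\frac{\sigma_N^2}{K^3}\right],
\]
and
\[
\E_N\!\left[|A_N^{(2,2)}|\right]\le C\sigma_N\cdot\E_N\!\left[\int_0^T\sum_i(\nabla\etabar_{i,K})^2 dt\right]\le C\!\left[\frac{1}{N^a}+\frac{1}{\widetilde\sigma_N}+\frac{N\sigma_N}{K^3}\right].
\]
For the boundary terms, I would exploit $\partial_u F(w,w)=0$ to write $\partial_u F(\etabar_{K,K},\rho(t))=\partial_u^2 F(\eta_*,\rho(t))(\etabar_{K,K}-\rho(t))$, then apply Cauchy--Schwarz between the mean-deviation and gradient estimates of Proposition \ref{prop:bd-block-ge}; with the $\sigma_N^2$ from the prefactor this yields
\[
\E_N\!\left[|A_N^{(2,\pm)}|^2\right]\le C\sigma_N^2\!\left[\!\left(\frac{K}{\sigma_N}+\frac{1}{\widetilde\sigma_N}\right)\!\left(\frac{1}{N^a}+\frac{1}{\widetilde\sigma_N}\right)+\frac{1}{K}\right]\!\cdot\frac{1}{K^2}\!\left[\!\cdots\!\right],
\]
the remaining bracket being the analogous gradient bound.

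The main (and essentially only) obstacle is bookkeeping: one must verify that under the combined hypotheses \eqref{eq:assp-ge}, \eqref{eq:assp-a-ge} and \eqref{eq:assp-k-ge} every leftover term above vanishes as $N\to\infty$. The critical term is $N\sigma_N/K^3$ coming from $A_N^{(2,2)}$, which needs $K\gg(N\sigma_N)^{1/3}$. This is automatic from \eqref{eq:assp-k-ge}: if $\sigma_N\le\sqrt N$ then $(N\sigma_N)^{1/3}\le\sqrt N\ll K$, while if $\sigma_N>\sqrt N$ then $(N\sigma_N)^{1/3}<\sigma_N\ll K$. The remaining terms $\sigma_N/N$, $\sigma_N^2/K^3$, $K/(N^a\sigma_N)\cdot\sigma_N^2=K\sigma_N/N^a$ (bounded via $K\ll N^a\sigma_N$), and $K\sigma_N^2/(\widetilde\sigma_N^2 K^2)$ are then handled by the explicit conditions $K\ll N^a\sigma_N$, $K\ll\widetilde\sigma_N\sigma_N$, and $\widetilde\sigma_N\to\infty$. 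Once this is checked, $\E_N|A_N^{(2)}|\to 0$, closing the lemma.
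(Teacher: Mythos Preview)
Your approach is correct and essentially identical to the paper's: the same summation-by-parts decomposition into $A_N^{(2,1)}+A_N^{(2,2)}+A_N^{(2,-)}+A_N^{(2,+)}$, the same use of Propositions \ref{prop:two-block-ge} and \ref{prop:bd-block-ge}, and the same boundary trick via $\partial_u F(w,w)=0$. Your explicit case split verifying $N\sigma_N/K^3\to 0$ is even a bit more detailed than the paper; the only blemish is that your final list of ``remaining terms'' is loose (the quantity $K\sigma_N/N^a$ does not actually arise once you correctly multiply the deviation and gradient bounds at the boundary, and your stated reason ``$K\ll N^a\sigma_N$'' would not control it anyway).
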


\begin{proof}
With some $\xi_{i,K}$ between $\etabar_{i,K}$ and $\etabar_{i+1,K}$ we have 
\begin{align*}
&A_N^{(2)}(\psi,\rho) = A_N^{(2,1)} + A_N^{(2,2)} + A_N^{(2,-)} + A_N^{(2,+)}, \\
&A_N^{(2,1)} = - \frac{\sigma_N-\bar p}2 \int_0^T \sum_{i=K}^{N-K}
\nabla\bar\psi_i \partial_u\h\big(\etabar_{i+1,K},\rho(t)\big)
\nabla\etabar_{i,K}\,dt, \\
&A_N^{(2,2)} = - \frac{\sigma_N-\bar p}2 \int_0^T \sum_{i=K}^{N-K} 
\bar\psi_i \partial_u^2\h\big(\xi_{i,K},\rho(t)\big) 
\big(\nabla\etabar_{i,K}\big)^2dt, \\
&A_N^{(2,-)} = - \frac{\sigma_N-\bar p}2 \int_0^T 
\bar\psi_K \partial_u\h\big(\etabar_{K,K},\rho(t)\big)
\nabla\etabar_{K+1,K}\,dt, \\
&A_N^{(2,+)} = \frac{\sigma_N-\bar p}2 \int_0^T 
\bar\psi_{N-K+1} \partial_u\h\big(\etabar_{N-K+1,K},\rho(t)\big) 
\nabla\etabar_{N-K,K}\,dt. 
\end{align*}
By the $H_1$ estimate in Corollary \ref{cor:h1-ge}, as $\sigma_N \ll N$, 
\begin{align*}
\E_N \left[ \big|A_N^{(2,1)}\big|^2 + \big|A_N^{(2,2)}\big| \right] &\le C(\psi,\h) \left( \frac{\sigma_N^2}N + \sigma_N \right) \left( \frac1{N^a\sigma_N} + \frac1{\widetilde\sigma_N\sigma_N} + \frac N{K^3} \right) \\
&\le C'(\psi,\h)\left(\frac1{N^a} + \frac1{\widetilde\sigma_N} + \frac{N\sigma_N}{K^3} \right). 
\end{align*}
We are left with the boundary terms. 
Similarly to \eqref{eq:boundterm}, 
\begin{equation}
\begin{aligned}
\E_N \left[ \big|A_N^{(2,-)}\big|^2 \right] \le\:&C(\psi,\h) \times \sigma_N^2 \times \E_N \left[ \int_0^T \big(\nabla\etabar_{K,K}\big)^2dt \right] \\
&\times \E_N \left[ \int_0^T \big(\etabar_{K,K}-\rho_-(t)\big)^2dt \right] \\
\le\:&C' \left( 1 + \frac{\sigma_N}{K\widetilde\sigma_N} \right) \left( \frac1{N^{2a}}+\frac1{\widetilde\sigma_N^2} \right) + \frac{\sigma_N^2}{K^4}, 
\end{aligned}
\end{equation}
where the last line follows from Proposition \ref{prop:bd-block-ge}. 
The right boundary term is estimated similarly. 
Finally, the proof is completed by noting that all the bounds above vanish as $N\to\infty$ under our conditions. 
\end{proof}

\begin{rem}
From the proof above we see that the expectation of $A_N^{(2)}$ does not vanish if $\rho_-\not=\rho_+$. 
Hence, it is responsible for the non-zero entropy production associated to the solution of \eqref{eq:qscl} in this case. 
\end{rem}

\section{Unbalanced dynamics: coupling}
\label{sec:coupling}

In this section we prove Lemma \ref{lem:nu} and \ref{lem:j} by a coupling argument. 
Recall that in Liggett case, the time-dependent boundary rates are given by 
\begin{align}
\label{eq:rates1}
(\alpha,\beta,\gamma,\delta) = \left( \frac{1+\bar p}2\rho_-, \frac{1+\bar p}2(1-\rho_+), \frac{1-\bar p}2(1-\rho_-), \frac{1-\bar p}2\rho_+ \right), 
\end{align}
while in reversible case, they are
\begin{align}
\label{eq:rates2}
(\alpha,\beta,\gamma,\delta) = \widetilde\sigma_N
\big(\la_-\rho_-, \la_+(1- \rho_+), \la_-(1-\rho_-), \la_+\rho_+\big). 
\end{align}
Recall the limit distribution $\mathfrak Q$ on $\mathcal Y$ satisfying \eqref{eq:33} and the current $\mathcal J(T)$ satisfying \eqref{eq:quasi-current}, both associated with the boundary rates $(\alpha,\beta,\gamma,\delta)$. 

\begin{lem}\label{lem:mono-nu}
If $\alpha\le\alpha_*$, $\gamma\ge\gamma_*$ on $[0,T]$, then for each $y\in[0,1]$, 
\begin{align}
E^{\mathfrak Q} \big[\nu_{x,t}([y,1])\big] \le E^{\mathfrak Q_*} \big[\nu_{x,t}([y,1])\big], \quad (x,t)-\text{a.e. in }\Sigma_T, 
\end{align}
where $\mathfrak Q_*$ is the Young measure corresponding to $(\alpha_*,\beta,\gamma_*,\delta)$. 
The same result holds for $(\alpha,\beta_*,\gamma,\delta_*)$ such that $\beta\ge\beta_*$ and $\delta\le\delta_*$ on $[0,T]$. 
\end{lem}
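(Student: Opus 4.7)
The natural approach is a basic coupling argument in the spirit of Liggett: construct on a common probability space two copies $\eta(s)$ and $\eta^*(s)$ of the ASEP dynamics generated by $N^{1+a}L_{N,s}$, driven respectively by the boundary rates $(\alpha,\beta,\gamma,\delta)(s)$ and $(\alpha_*,\beta,\gamma_*,\delta)(s)$, started from ordered initial configurations with $\eta_i(0)\le\eta_i^*(0)$ for every $i$ (for instance $\eta^*(0)\equiv 1$), and show that the partial order $\eta_i(s)\le\eta_i^*(s)$ is preserved for all $s\in[0,t]$.

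The coupling is built as follows. In the bulk both processes use common Poisson clocks: a ``right-jump'' clock at rate $p$ and a ``left-jump'' clock at rate $1-p$ on each bond, with the swap attempted in each copy whenever the exclusion rule allows; a short case analysis (four cases per clock type) shows the order is preserved. At the left boundary, where the rates differ, I run the creation clock at the larger rate $\alpha_*$ and thin by the factor $\alpha/\alpha_*$ to decide whether the ring also triggers a creation in $\eta$, and I run the annihilation clock at the larger rate $\gamma$ and thin by $\gamma_*/\gamma$ to decide whether it also triggers an annihilation in $\eta^*$. The hypotheses $\alpha\le\alpha_*$ and $\gamma\ge\gamma_*$ ensure that the ``extra'' boundary events are a creation in $\eta^*$ or an annihilation in $\eta$, neither of which can violate the order. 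The right boundary uses common clocks since $\beta$ and $\delta$ are identical for both processes.

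Once the pointwise order is in place, the empirical densities satisfy $\zeta_N(x,s)\le \zeta_N^*(x,s)$ on $\Sigma_t$. Extracting a joint subsequence $N_h$ along which the pair $(\mathfrak Q_{N_h},\mathfrak Q_{N_h}^*)$ converges weakly to some $(\mathfrak Q,\mathfrak Q_*)\in\mathcal Y\times\mathcal Y$, for every non-decreasing continuous $h:[0,1]\to\R$ and every non-negative $\psi\in\mathcal C(\Sigma_t)$ the pointwise inequality $h(\zeta_{N_h})\le h(\zeta_{N_h}^*)$ together with the convergence \eqref{eq:33} applied to both copies yields, in the limit,
\begin{equation*}
E^{\mathfrak Q}\left[\iint_{\Sigma_t}\psi(x,s)\int_0^1 h(y)\,\nu_{x,s}(dy)\,dx\,ds\right]\le E^{\mathfrak Q_*}\left[\iint_{\Sigma_t}\psi(x,s)\int_0^1 h(y)\,\nu_{x,s}(dy)\,dx\,ds\right].
\end{equation*}
Approximating $\1_{[y,1]}$ monotonically from below by non-decreasing continuous functions $h_\varepsilon$ and invoking monotone convergence then delivers the stated inequality tested against any $\psi\ge 0$, hence $(x,s)$-a.e. on $\Sigma_t$.

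The main obstacle is the case-by-case verification that the coupling actually preserves the order under the time-dependent boundary dynamics; the rate inequalities are used in precisely the right way to rule out order-violating transitions at sites $1$ and $N$. Everything else is routine: the bulk coupling is the classical one, and the passage from continuous test functions to the indicator $\1_{[y,1]}$ is handled by the monotonicity in $h$. The symmetric statement for $(\alpha,\beta_*,\gamma,\delta_*)$ with $\beta\ge\beta_*$ and $\delta\le\delta_*$ follows by the same argument with the roles of the two boundaries exchanged.
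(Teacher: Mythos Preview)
Your proposal is correct and follows essentially the same route as the paper: construct the standard basic coupling so that $\eta(s)\le\eta^*(s)$ coordinatewise (the paper writes down the coupled generator on the ordered state space rather than describing it via shared Poisson clocks and thinning, but this is the same object), deduce the pointwise inequality for the empirical densities, pass to the Young-measure limit against nonnegative test functions and increasing $h$, and finally approximate $\1_{[y,1]}$ by continuous increasing functions.
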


\begin{lem}\label{lem:mono-j}
Suppose that $\alpha\le\alpha_*$, $\gamma\ge\gamma_*$ on $[0,T]$. 
By $\mathcal J_*(T)$ we denote the current associated with the boundary rates $(\alpha_*,\beta,\gamma_*,\delta)$, then $\mathcal J(T) \le \mathcal J_*(T)$.
The inverse inequality holds for the current associated with $(\alpha,\beta_*,\gamma,\delta_*)$ if $\beta\ge\beta_*$, $\delta\le\delta_*$ on $[0,T]$. 
\end{lem}

The proofs of Lemma \ref{lem:mono-nu} and \ref{lem:mono-j} are postponed to the end of this section. 
We here first show Lemma \ref{lem:nu} and \ref{lem:j} based on them. 

\begin{proof}[Proof of Lemma \ref{lem:nu}]
In Liggett case, let $\rho'(t) = \max\{\rho_-(t),\rho_+(t)\}$ for $t\in[0,T]$. 
Define $(\alpha_*, \beta_*, \gamma_*, \delta_*)$ through \eqref{eq:rates1} with $\rho_-=\rho_+=\rho'$, 
then $\alpha\le\alpha_*$, $\gamma\ge\gamma_*$, $\beta\ge\beta_*$, $\delta\le\delta_*$. 
Denote by $\mathfrak Q_*$ the limit point associated to $(\alpha_*,\beta_*,\gamma_*,\delta_*)$. 
Lemma \ref{prop:hl-balan} yields that $\mathfrak Q_*$ concentrates on the Young measure $\nu_{x,t} = \delta_{\rho_*(t)}$. 
Applying Lemma \ref{lem:mono-nu}, 
\begin{align}
E^{\mathfrak Q} \big[\nu_{x,t}([y,1])\big] \le E^{\mathfrak Q_*} \big[\nu_{x,t}([y,1])\big] = \1\{y \le \rho'(t)\}. 
\end{align}
Similarly, $E^{\mathfrak Q} [\nu_{x,t}([y,1])] \ge \1\{y \le \min(\rho_-,\rho_+)\}$. 
The proof is then completed for Liggett boundaries. 
The reversible case can be proved in exactly the same way. 
\end{proof}

\begin{proof}[Proof of Lemma \ref{lem:j}]
As before we prove only the Liggett case. 
Suppose that $\rho_-(t)\le\rho_+(t)$ for $t\in[0,T]$ 
and let $\rho_*(t) \in [\rho_-(t),\rho_+(t)]$ such that 
\begin{align}
J(\rho_*(t)) = \inf \big\{J(\rho); \rho\in[\rho_-(t),\rho_+(t)]\big\}. 
\end{align}
Observe that $\rho_*$ may not be unique when $\rho_-+\rho_+=1$.
Define $(\alpha_*, \beta_*, \gamma_*, \delta_*)$ through \eqref{eq:rates1} with $\rho_-=\rho_+=\rho_*$. 
Since $\alpha\le\alpha_*$, $\gamma\ge\gamma_*$, $\beta\le\beta_*$, $\delta\ge\delta_*$, thanks to Lemma \ref{lem:mono-j} and Proposition \ref{prop:hl-balan}, 
\begin{align}
  \mathcal J(T) \le \mathcal J_*(T) = \int_0^T J(\rho_*(t))dt. 
\end{align}
The criteria \eqref{eq:j2} then follows.
The other one is proved similarly. 
\end{proof}

Both Lemma \ref{lem:mono-nu} and Lemma \ref{lem:mono-j} are consequences of the so-called standard coupling for simple exclusion process. 
To construct the coupling, define $\bar\Om_N := \{\xi=\eta \oplus \eta'; \eta_i \le \eta'_i, \forall\:i=1,\ldots,N\}$. 
For $\xi \in \bar\Om_N$, let 
\begin{equation}
\begin{aligned}
&\xi^{1,+} := \eta^{1,+} \oplus (\eta')^{1,+}, &&\xi^{1,-} := \eta^{1,-} \oplus (\eta')^{1,-}, \\
&\xi^{N,+} := \eta^{N,+} \oplus (\eta')^{N,+}, &&\xi^{N,-} := \eta^{N,-} \oplus (\eta')^{N,-}, \\
&\xi^{N,*} := \eta^{N,-} \oplus (\eta')^{N,+}, &&\xi^{x,x+1} := \eta^{i,i+1} \oplus (\eta')^{i,i+1}, 
\end{aligned}
\end{equation}
where for $\eta \in \Om_N$, $\eta^{1,\pm}$ are $\eta^{N,\pm}$ are obtained through 
\begin{equation}
\begin{aligned}
&\eta^{1,+} := (1,\eta_2,\ldots,\eta_N), &&\eta^{1,-} := (0,\eta_2,\ldots,\eta_N), \\
&\eta^{N,+} := (\eta_1,\ldots,\eta_{N-1},1), &&\eta^{N,-} := (\eta_1,\ldots,\eta_{N-1},0). 
\end{aligned}
\end{equation}
Note $\xi^{1,\pm}$, $\xi^{N,\pm}$, $\xi^{N,*}$ and $\xi^{i,i+1}$ all belong to $\bar\Om_N$. 

Fix some $N \ge 2$ and without loss of generality take $\la_0 = 1$. 
Let $(\alpha,\beta,\gamma,\delta)$ and $(\alpha,\beta,\gamma,\delta_*)$ be two groups of boundary rates, such that $\delta(s) \le \delta_*(s)$ for $0 \le s \le t$. 
Define the Markov generator $\bar L_{N,s}$ on $\bar\Om_N$ as 
\begin{align}
\bar L_{N,s} := \bar L_{N,s}^{(1)}+\bar L_{N,s}^{(2)}+\bar L_{N,s}^{(3)}+\bar L_{N,s}^{(4)}, 
\end{align}
where for any $f$ defined on $\bar\Omega_N$, 
\begin{align*}
&\bar L_{N,s}^{(1)}f = \sum_{i=1}^{N-1} \big(p\eta'_i(1-\eta_{i+1})+(1-p)\eta'_{i+1}(1-\eta_i)\big)\big(f(\xi^{i,i+1})-f(\xi)\big) 
\\
&\bar L_{N,s}^{(2)}f = \alpha(s)(1-\eta_1)\big(f(\xi^{1,+})-f(\xi)\big) + \gamma(s)\eta'_1\big(f(\xi^{1,-})-f(\xi)\big) 
\\
&\bar L_{N,s}^{(3)}f = \delta(s)(1-\eta_N)\big(f(\xi^{N,+})-f(\xi)\big) + \beta(s)\eta'_N\big(f(\xi^{N,-})-f(\xi)\big) 
\\
&\bar L_{N,s}^{(4)}f = \big(\delta_*(s)-\delta(s)\big)(1+\eta_N-\eta'_N)\big(f(\xi^{N,*})-f(\xi)\big). 
\label{eq:cp4}
\end{align*}

Denote by $\xi = \xi(s)$ the Markov process generated by $\bar L_{N,s}$. 
Observe that $\xi$ couples the processes associated respectively to $(\alpha,\beta,\gamma,\delta)$ and $(\alpha,\beta,\gamma,\delta_*)$. 
Indeed, if $f$ is a function on $\bar\Om_N$ such that $f(\eta\oplus\eta')=g(\eta)$, 
it is not hard to verify that $\bar L_{N,t}f(\eta\oplus\eta') = L_{N,t}g(\eta)$. 
Similarly, $\bar L_{N,t}f(\eta\oplus\eta') = L'_{N,t}g(\eta')$ if $f(\eta\oplus\eta') = g(\eta')$. 

\begin{proof}[Proof of Lemma \ref{lem:mono-nu}]
We prove here for $(\alpha,\beta,\gamma)=(\alpha_*,\beta_*,\gamma_*)$, $\delta\le\delta_*$. 
The other cases are similar. 
In the coupled process $\xi = \eta\oplus\eta'$, $\eta_i(t) \le \eta'_i(t)$, so that pointwisely, 
\begin{align}
\iint_{\Sigma_T} f(x,t)g(\rho_N(x,t))dx\,dt \le \iint_{\Sigma_T} f(x,t)g(\rho'_N(x,t))dx\,dt 
\end{align}
for positive function $f \in \mathcal C(\Sigma_T)$ and increasing function $g \in \mathcal C([0,1])$. 
From \eqref{eq:33},
\begin{align*}
E^{\mathfrak Q} \left[\iint_{\Sigma_T} f(x,t)dx\,dt\int_0^1 gd\nu_{x,t}\right] \le E^{\mathfrak Q_*} \left[\iint_{\Sigma_T} f(x,t)dx\,dt\int_0^1 gd\nu_{x,t}\right]. 
\end{align*}
As $f$ is an arbitrary continuous positive function, 
\begin{align}
E^{\mathfrak Q} \left[\int_0^1 gd\nu_{x,t}\right] \le E^{\mathfrak Q_*} \left[\int_0^1 gd\nu_{x,t}\right], \quad (x,t)-\text{a.e. in } \Sigma_T.
\end{align}
The conclusion follows since we can approximate the indicator function $\1_{[y,1]}$ by a sequence of continuous increasing functions. 
\end{proof}

\begin{proof}[Proof of Lemma \ref{lem:mono-j}]
We prove here \eqref{eq:j2} with $(\alpha,\beta,\gamma)=(\alpha_*,\beta_*,\gamma_*)$, $\delta\le\delta_*$. 
For the coupled process $\xi = \eta\oplus\eta'$, let $\eta_i^\Delta = \eta'_i-\eta_i$ be the \emph{second class particle process}. 
Recall the counting process $h=h_+-h_-$ defined for $\eta(\cdot)$ in \eqref{eq:counting1}, \eqref{eq:counting2}. 
Define similar counting processes $h'$, $h'_\pm$ and $h^\Delta$, $h_\pm^\Delta$ for $\eta'(\cdot)$ and $\eta^\Delta(\cdot)$, respectively. 
The definition of $\xi$ assures that
\begin{align}
\label{eq:second-par}
h'(i,T) - h(i,T) = h^\Delta(i,T), \quad \forall\,0 \le i \le N. 
\end{align}
Observe that in this case, the particle in $\eta^\Delta$ can enter only from $N$, so that
\begin{align}
\sum_{i=0}^N h^\Delta(i,T) = F\big(\eta^\Delta(0)\big) - F\big(\eta^\Delta(t)\big), \quad F(\eta):= \sum_{j:\,\eta_j=1} (N+1-j).
\end{align}
For any $\eta \in \Om_N$, let $\xi(0) = \eta\oplus\eta$, then $F(\eta^\Delta(0))=0$, hence,
\begin{align}
\sum_{i=0}^N h'(i,T) - \sum_{i=0}^N h(i,T) = \sum_{i=0}^N h^\Delta(i,T) \le 0.
\end{align}
From \eqref{eq:5-2} and \eqref{eq:5-0},
\begin{equation}
  \begin{aligned}
    \mathcal J(T) &= \lim_{N\to\infty} \frac1{N^{2+a}} \sum_{i=0}^N \E_N [h(i,T)] \\
    &\ge \lim_{N\to\infty} \frac1{N^{2+a}} \sum_{i=0}^N \E_N [h'(i,t)] = \mathcal J_*(T).
  \end{aligned}
\end{equation}
The other cases follow from similar arguments. 
\end{proof}


\appendix
\section{Logarithmic Sobolev inequalities}
\label{sec:log-sobol-ineq}

In this appendix we fix a box of length $k$. 
For $\rho \in (0,1)$, let $\nu_\rho$ be the product Bernoulli measure 
on $\Om_k=\{0,1\}^K$ with density $\rho$. 
For $h = 0, 1, \dots, k$, let $\nu_\rho(\eta|h) =\tilde\nu(\eta|h)$ be 
the uniform distribution on 
\begin{align}
\Om_{k,h} := \left\{ \eta\in\Om_k~\bigg|~\sum_{i=1}^k \eta_j =h \right\}, 
\end{align}
and $\bar\nu_\rho(h)$ be the Binomial distribution $\mathcal B(k,\rho)$. 

The log-Sobolev inequality for the simple exclusion (\cite{Yau}) 
yields that 
there exists a universal constant $C_\LS$ such that 
\begin{equation}
\label{eq:logsob}
\begin{aligned}
\sum_{\eta\in\Om_{k,h}} f(\eta)\log f(\eta) \tilde\nu(\eta|h)
\le \frac{C_\LS k^2}2 \sum_{\eta\in\Om_{k,h}}
\sum_{i=1}^{k-1} \left( \sqrt f (\eta^{i,i+1}) - \sqrt f (\eta) \right)^2 \tilde\nu(\eta|h). 
\end{aligned}
\end{equation}
for any $f\ge0$ on $\Om_{k,h}$ such that 
$\sum_{\eta\in\Om_{k,h}} f\tilde \nu(\eta|h) = 1$.. 

In the following we extend \eqref{eq:logsob} to a log-Sobolev inequality 
associated to the product measure $\nu_\rho$ with boundaries. 
The result is necessary for the boundary block estimates in Section \ref{subsec:proof-li} and \ref{subsec:proof-ge}. 

\begin{prop}\label{prop:logsob-bound}
There exists a constant $C_\rho$ such that 
\begin{equation}
\begin{aligned}
\sum_{\eta\in\Om_k} f(\eta)\log f(\eta) \nu_\rho(\eta)
\le C_\rho k^2 \sum_{\eta\in\Om_k} 
\sum_{i=1}^{k-1} \left( \sqrt f (\eta^{i,i+1}) - \sqrt f (\eta) \right)^2 \nu_\rho(\eta) \\
+ C_\rho k \sum_{\eta\in\Om_k} 
\rho^{1-\eta_1}(1-\rho)^{\eta_1} \left( \sqrt f (\eta^1) - \sqrt f (\eta) \right)^2 \nu_\rho(\eta). 
\end{aligned}
\end{equation}
for any $f\ge0$ on $\Om_k$ such that
$\sum_{\eta\in\Om_k} f\nu_\rho = 1$. 
\end{prop}

\begin{proof}
As the reference measures $\nu_\rho$ are equivalent for $0<\rho<1$, without loss of generality we can fix $\rho=1/2$ and thus $\nu_\rho\equiv2^{-k}$. 
Consider the log-Sobolev inequality for the dynamics where particles 
are created and destroyed at each site with intensity $1/2$. 
Since this is a product dynamics, the log-Sobolev constant is uniform in $k$:
\begin{equation}
\label{eq:13}
\begin{aligned}
\frac1{2^k}\sum_{\eta\in\Om_k} f(\eta)\log f(\eta)
\le \frac C{2^{k+1}} \sum_{i=1}^k \sum_{\eta\in\Om_k} 
\left[ \sqrt f(\eta^i) - \sqrt f(\eta) \right]^2. 
\end{aligned}
\end{equation}

We apply a telescopic argument on \eqref{eq:13}. 
For $\eta\in\Omega_k$ and $1 \le i \le k$, let 
\begin{equation}
\begin{aligned}
\tau_0 := \eta, \quad \tau_j := 
\begin{cases}
(\tau_{j-1})^{i-j,i-j+1}, &1 \le j \le i-1 \\
(\tau_{j-1})^1, &j=i \\
(\tau_{j-1})^{j-i,j-i+1}, &i+1 \le j \le 2i-1. 
\end{cases}
\end{aligned}
\end{equation}
Observing that $\tau_{2i-1}=\eta^i$, therefore 
\begin{align}
\sqrt f(\eta^i) - \sqrt f(\eta) = \sum_{j=0}^{2i-2} \left[ \sqrt f(\tau_{j+1}) - \sqrt f(\tau_j) \right], 
\end{align}
and elementary computation then gives 
\begin{align*}
\left[ \sqrt f(\eta^i) - \sqrt f(\eta)\right]^2 \le
&\;4 (i-1) \sum_{0 \le j \le 2(i-1), j\not=i-1} \left[ \sqrt f(\tau_{j+1}) - \sqrt f(\tau_j) \right]^2 \\
&+2 \left[ \sqrt f(\tau_i) - \sqrt f(\tau_{i-1}) \right]^2. 
\end{align*}
Noting that as $\rho=1/2$, $\nu_\rho$ is invariant with respect to the exchange, creation as well as elimination of particles, we obtain by summing up in $\eta$ that 
\begin{equation*}
\begin{split}
\sum_{\eta\in\Om_k} \left[ \sqrt f(\eta^i) - \sqrt f(\eta) \right]^2\nu_\rho(\eta) \le
&\;8(i-1) \sum_{\eta\in\Om_k} \sum_{j=1}^{i-1} \left[ \sqrt f(\eta^{j,j+1}) - \sqrt f(\eta) \right]^2\nu_\rho(\eta) \\
&+ 2\sum_{\eta\in\Om_k} \left[ \sqrt f(\eta^1) - \sqrt f(\eta) \right]^2\nu_\rho(\eta). 
\end{split}
\end{equation*}
Summing up in $i$ we get the required inequality.
\end{proof}

\addcontentsline{toc}{chapter}{References}

\end{document}